\newtheorem{thm}{Theorem}[section]
\newtheorem{lemma}[thm]{Lemma}
\newtheorem{proposition}[thm]{Proposition}
\newtheorem{definition}[thm]{Definition}
\newtheorem{corollary}[thm]{Corollary}
\newcommand{\p}{\mathbb{P}}
\newcommand{\cf}{\mathrm{cf}}
\newcommand{\cof}{\mathrm{cof}}
\begin{document}

\title{Coherent Adequate Forcing and Preserving CH}

\author{John Krueger and Miguel Angel Mota}

\address{John Krueger \\ Department of Mathematics \\ 
University of North Texas \\
1155 Union Circle \#311430 \\
Denton, TX 76203}
\email{jkrueger@unt.edu}

\address{Miguel Angel Mota \\ Department of Mathematics \\ 
University of Toronto \\
Toronto, Ontario \\
Canada M5S 2E4}
\email{motagaytan@gmail.com}

\date{June, 2014}

\thanks{2010 \emph{Mathematics Subject Classification.} 
Primary: 03E40. Secondary: 03E05}

\thanks{\emph{Key words and phrases.} Forcing, side conditions, adequate sets, 
coherent adequate sets}

\begin{abstract}
We develop a general framework for forcing with 
coherent adequate sets on $H(\lambda)$ as side conditions, 
where $\lambda \ge \omega_2$ is a cardinal of uncountable cofinality. 
We describe a class of forcing posets which we call 
coherent adequate type forcings. 
The main theorem of the paper is that any 
coherent adequate type forcing preserves CH. 
We show that there exists a forcing poset for adding a club subset 
of $\omega_2$ with finite conditions while preserving CH, solving a 
problem of Friedman \cite{friedman}.
\end{abstract}

\maketitle

The method of side conditions, invented by 
{Todor\v cevi\' c} (\cite{todor}), describes a style of forcing in which 
elementary substructures are included in the conditions 
of a forcing poset to ensure that the forcing poset preserves cardinals. 
Friedman (\cite{friedman}) and Mitchell (\cite{mitchell}) independently 
took the first steps in generalizing the method from adding generic 
objects of size $\omega_1$ to adding larger objects 
by defining forcing posets with finite conditions for adding a 
club subset of $\omega_2$. 
Neeman (\cite{neeman}) was the first to simplify the side conditions of 
Friedman and Mitchell and present a generally applicable technique 
for forcing on $\omega_2$ with finite conditions.

Krueger (\cite{jk21}) developed an alternative framework for 
forcing objects of size $\omega_2$ with finite conditions, using 
adequate sets of models as side conditions. 
An adequate set of models consists of countable models which are 
pairwise membership comparable up to some initial segment. 
Later Krueger (\cite{jk23}) introduced the idea of coherent adequate sets, 
which requires the existence of isomorphisms between certain models in an adequate set. 
This idea combined adequate sets with an isomorphism structure 
originally used by {Todor\v cevi\' c} \cite{todor} in the 
context of forcing on $\omega_1$. 
Coherent adequate sets were applied in \cite{jk23} to define a 
strongly proper forcing poset which forces $\Box_{\omega_1}$.

The present paper makes advances on the framework of coherent adequate sets. 
We present a more general development in the context of 
$H(\lambda)$ for a cardinal $\lambda \ge \omega_2$ of uncountable cofinality, rather 
than just $H(\omega_2)$ as was treated in \cite{jk23}. 
We define a class of forcing posets which we call 
coherent adequate type forcings. 
The main theorem of the paper is that any coherent adequate type 
forcing preserves CH. 
More generally, any coherent adequate type forcing on $H(\lambda)$, 
where $2^\omega < \lambda$ is a cardinal of uncountable cofinality, 
collapses $2^\omega$ to have size $\omega_1$ and forces CH. 
We describe coherent adequate type forcings 
for adding a square sequence and 
for adding a club to a fat stationary subset of $\omega_2$. 

The forcing posets of Friedman, Mitchell, and Neeman for adding a club subset 
of $\omega_2$ with finite conditions all force that $2^\omega = \omega_2$. 
Any forcing poset which has strongly generic conditions for 
countable models will add reals, including those defined in this paper. 
These earlier forcings for adding clubs with finite conditions 
can be factored in many ways so that the quotient forcing also 
has strongly generic conditions in the intermediate extension. 
For this reason, these posets add $\omega_2$ many distinct reals. 
Friedman (\cite{friedman}) asked whether it is possible to add a club 
subset of $\omega_2$ with finite conditions while preserving CH. 
We solve this problem by defining a forcing poset which adds a club 
to a fat stationary set and falls in the class of coherent adequate 
type forcings.

Finally we show that, under CH, the forcing poset consisting of 
finite coherent adequate subsets of $H(\lambda)$ ordered by inclusion, 
where $\lambda \ge \omega_2$ 
is regular, is $\omega_2$-c.c.\ and therefore preserves all cardinals. 

\bigskip

Section 1 develops the basic ideas of adequate and coherent 
adequate sets. 
This development is almost self contained, except for three results 
for which we refer the reader to the corresponding results of \cite{jk21} 
for proofs. 
Differences between the current paper and earlier papers on adequate sets 
include the consideration of a more general 
context, namely countable elementary substructures of $H(\lambda)$ for some 
$\lambda \ge \omega_2$ of uncountable cofinality, rather 
than just $H(\omega_2)$. 
Also we omit the assumption 
that $2^{\omega_1} = \omega_2$. 
Since we are not interested in taking initial segments of models as was done 
in \cite{jk21}, 
the set $\Lambda$ which is used to compare models can 
be taken to be $\omega_2 \cap \cof(\omega_1)$. 
Some simplifications of the material in \cite{jk21} follow from these new conventions 
and from the presence of isomorphisms between models.

Section 2 proves the main result of the paper, that any coherent adequate 
type forcing preserves CH. 
Section 3 proves that if CH fails, then any coherent adequate type forcing 
on $H(\lambda)$, where $\lambda > 2^\omega$ is a cardinal of uncountable cofinality, 
collapses $2^\omega$ to have size $\omega_1$, preserves 
$(2^\omega)^+$, and forces CH. 
Sections 4 and 5 develop the technical machinery for amalgamating 
conditions over countable elementary substructures. 
Sections 6 and 7 give examples of coherent adequate type forcings. 
In Section 6 we review the poset from \cite{jk23} for adding a square 
sequence. 
In Section 7 we define a coherent adequate type forcing poset 
which adds a club to a fat stationary subset of $\omega_2$. 

Section 8 presents general results for amalgamating coherent adequate 
sets over elementary substructures of size $\omega_1$. 
These results are not needed for the present paper, but could be useful for 
future applications. 
We show that the forcing poset consisting of finite coherent adequate 
subsets of $H(\lambda)$ ordered by end-extension is $\omega_2$-c.c.

\bigskip

The general development of coherent adequate sets presented in 
Sections 1, 4, 5, and 8 is due to Krueger.

Asper\'o and Mota (\cite{mota}) 
proved recently that for any cardinal $\lambda \ge \omega_2$ of 
uncountable cofinality, 
the forcing poset consisting of finite symmetric systems of countable 
elementary substructures of $H(\lambda)$ ordered by inclusion preserves CH. 
A symmetric system is similar to a coherent adequate set, except that it 
does not have the adequate structure. 
Also {Todor\v cevi\' c} pointed out to the authors 
that in an unpublished result from the 1980s he proved using a 
different argument that forcing with 
finite sets of countable elementary substructures of $H(\omega_2)$ 
with isomorphisms, of the 
type described at the end of \cite{todor}, preserves CH and adds an 
$\omega_1$-Kurepa tree. 

Neither of these results show how to force with side conditions together 
with another finite set of objects to preserve CH, nor do they imply 
anything regarding adequate set forcing. 
By arguments of Miyamoto \cite{miyamoto}, any coherent adequate type forcing  
on $H(\lambda)$ adds an $\omega_1$-tree with $\lambda$ 
many cofinal branches, for any regular cardinal $\lambda \ge \omega_2$. 
Thus $\omega_1$-Kurepa trees exist in coherent adequate type forcing 
extensions.

\section{Coherent Adequate Sets}

In this section we present the basic framework of coherent adequate sets. 
We will assume throughout the paper that 
$\lambda \ge \omega_2$ is a fixed cardinal of uncountable cofinality. 
This implies that any countable subset of $H(\lambda)$ is a member of $H(\lambda)$.
We also fix a predicate $Y \subseteq H(\lambda)$, which we assume codes a 
well-ordering of $H(\lambda)$ among other things.

Let $\mathcal X$ denote the set of $N \subseteq H(\lambda)$ such that $N$ is countable 
and $N \prec (H(\lambda),\in,Y)$. 
We introduce a way to compare members of $\mathcal X$. 
Fix $\Lambda$ a cofinal subset of $\omega_2 \cap \cof(\omega_1)$.

\begin{definition}
For $M \in \mathcal X$, let $\Lambda_M$ denote the set of 
$\beta \in \Lambda$ such that 
$$
\beta = \min(\Lambda \setminus (\sup(M \cap \beta)).
$$
\end{definition}

Since $M$ is countable, it has countably many limit points. 
It follows easily that $\Lambda_M$ is countable.

\begin{lemma}
Let $M \in \mathcal X$ and $\beta \in \Lambda_M$. 
If $\beta_0 \in \Lambda \cap \beta$ then $M \cap [\beta_0,\beta) \ne \emptyset$. 
\end{lemma}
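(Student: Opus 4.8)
The plan is to argue by contradiction, exploiting the minimality built into the definition of $\Lambda_M$. Suppose toward a contradiction that $\beta_0 \in \Lambda \cap \beta$ yet $M \cap [\beta_0,\beta) = \emptyset$. The first step is to record what the membership $\beta \in \Lambda_M$ gives: writing $\delta = \sup(M \cap \beta)$, the defining equation states precisely that $\beta$ is the least element of $\Lambda$ lying at or above $\delta$, that is, $\beta = \min(\Lambda \setminus \delta)$, where I read $\Lambda \setminus \delta$ as the final segment $\{\gamma \in \Lambda : \gamma \ge \delta\}$ of $\Lambda$.

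Next I would compute $\delta$ under the contradiction hypothesis. Since the interval $[\beta_0,\beta)$ contains no element of $M$, every element of $M \cap \beta$ lies below $\beta_0$; hence $M \cap \beta = M \cap \beta_0$, and therefore $\delta = \sup(M \cap \beta) = \sup(M \cap \beta_0) \le \beta_0$. Combining this with the hypothesis $\beta_0 \in \Lambda$, the ordinal $\beta_0$ belongs to $\Lambda \setminus \delta$. But $\beta = \min(\Lambda \setminus \delta)$, so $\beta \le \beta_0$, contradicting the assumption that $\beta_0 < \beta$. This completes the argument.

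I expect no serious obstacle, as the proof is a short unwinding of the definition. The only point demanding care is the correct handling of $\delta$: one must recognize that an empty intersection $M \cap [\beta_0,\beta)$ forces $\sup(M \cap \beta)$ down to at most $\beta_0$, which is exactly what makes $\beta_0$ a genuine competitor for the minimum defining $\beta$ and thereby triggers the contradiction with its minimality.
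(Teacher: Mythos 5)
Your proof is correct and is essentially identical to the paper's own argument: both proceed by contradiction, observe that an empty intersection $M \cap [\beta_0,\beta)$ forces $\sup(M \cap \beta) \le \beta_0$, and then use the minimality in the definition of $\Lambda_M$ to conclude $\beta \le \beta_0$, contradicting $\beta_0 < \beta$. No gaps; the extra care you take in unwinding $\delta = \sup(M\cap\beta)$ merely makes explicit what the paper leaves terse.
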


\begin{proof}
Suppose for a contradiction that $M \cap [\beta_0,\beta) = \emptyset$. 
Then $\sup(M \cap \beta) \le \beta_0$ . 
So $\beta = \min(\Lambda \setminus (\sup(M \cap \beta))) \le \beta_0$, which 
contradicts that $\beta_0 < \beta$.
\end{proof}

\begin{lemma}
For $M$ and $N$ in $\mathcal X$, $\Lambda_M \cap \Lambda_N$ has a largest element.
\end{lemma}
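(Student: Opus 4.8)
The plan is to first observe that $\Lambda_M \cap \Lambda_N$ is nonempty and then argue that its supremum is actually attained. For nonemptiness I would check that $\min \Lambda$ lies in both $\Lambda_M$ and $\Lambda_N$: since no element of $\Lambda$ lies strictly below $\min \Lambda$, the interval $[\sup(M \cap \min\Lambda), \min\Lambda)$ contains no point of $\Lambda$, so $\min\Lambda = \min(\Lambda \setminus \sup(M \cap \min\Lambda))$, and likewise for $N$. Throughout I would use the reformulation that $\beta \in \Lambda_M$ precisely when $\Lambda \cap [\sup(M \cap \beta), \beta) = \emptyset$, which follows immediately by unpacking the definition of $\min(\Lambda \setminus \sup(M \cap \beta))$.

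Since $\Lambda_M$ is countable and $\omega_2$ is regular, $\Lambda_M \cap \Lambda_N$ is bounded below $\omega_2$, so $\gamma := \sup(\Lambda_M \cap \Lambda_N)$ is a well-defined ordinal below $\omega_2$. The crux is to show $\gamma \in \Lambda_M \cap \Lambda_N$, for then $\gamma$ is the desired largest element. Suppose toward a contradiction that $\gamma \notin \Lambda_M \cap \Lambda_N$. Then $\gamma$ is the limit of a strictly increasing $\omega$-sequence $\langle \beta_n : n < \omega \rangle$ of elements of $\Lambda_M \cap \Lambda_N$, so $\cf(\gamma) = \omega$ and hence $\gamma \notin \Lambda$. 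Applying the preceding lemma to $\beta_{n+1} \in \Lambda_M$ with the parameter $\beta_n \in \Lambda \cap \beta_{n+1}$ yields a point of $M$ in $[\beta_n, \beta_{n+1})$; as $n$ varies these points converge to $\gamma$, so $\sup(M \cap \gamma) = \gamma$. The same argument with $N$ gives $\sup(N \cap \gamma) = \gamma$.

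To finish, I would set $\beta = \min(\Lambda \setminus \gamma)$, which exists because $\Lambda$ is cofinal in $\omega_2$ and $\gamma < \omega_2$, and which satisfies $\beta > \gamma$ since $\gamma \notin \Lambda$; moreover $\Lambda \cap [\gamma, \beta) = \emptyset$ by minimality. Because $\beta > \gamma$, we have $M \cap \gamma \subseteq M \cap \beta$ and $N \cap \gamma \subseteq N \cap \beta$, so $\sup(M \cap \beta) \ge \gamma$ and $\sup(N \cap \beta) \ge \gamma$. Hence each of the intervals $[\sup(M \cap \beta), \beta)$ and $[\sup(N \cap \beta), \beta)$ is contained in $[\gamma, \beta)$ and therefore disjoint from $\Lambda$, which shows $\beta \in \Lambda_M \cap \Lambda_N$. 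This contradicts $\beta > \gamma = \sup(\Lambda_M \cap \Lambda_N)$, completing the argument. The main obstacle is precisely the step that the supremum is attained: the only way it could fail is for $\gamma$ to be approached from below by elements of the intersection, and the work lies in using the preceding lemma to convert that approximation into genuine cofinality of $M \cap \gamma$ and $N \cap \gamma$ in $\gamma$, which is exactly what forces the next element $\beta$ of $\Lambda$ back into the intersection.
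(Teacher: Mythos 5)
Your proof is correct: nonemptiness via $\min\Lambda$, boundedness of $\Lambda_M \cap \Lambda_N$ below $\omega_2$ from countability and regularity, and the key step showing that if the supremum $\gamma$ were not attained then $\cf(\gamma)=\omega$, both $M\cap\gamma$ and $N\cap\gamma$ would be cofinal in $\gamma$ (by Lemma 1.2), and hence $\min(\Lambda\setminus\gamma)$ would lie in $\Lambda_M \cap \Lambda_N$ above $\gamma$, a contradiction -- all check out. The paper itself omits this proof, deferring to Lemma 2.4 of \cite{jk21}, and your argument is essentially that standard one, so there is nothing to reconcile beyond noting that your write-up fills in the omitted details correctly.
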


We omit the proof and refer the reader to Lemma 2.4 of \cite{jk21}, whose proof is 
nearly identical to that needed in the present context.

\begin{definition}
For $M$ and $N$ in $\mathcal X$, let $\beta_{M,N}$ denote the largest element of 
$\Lambda_M \cap \Lambda_N$. 
The ordinal $\beta_{M,N}$ is called the \emph{comparison point} of $M$ and $N$.
\end{definition}

Given a set $K$ in $\mathcal X$, let $K'$ denote the set 
$(K \cap \omega_2) \cup (\lim(K \cap \omega_2))$.
 
\begin{lemma}
Let $M$ and $N$ be in $\mathcal X$. 
Then $M' \cap N' \subseteq \beta_{M,N}$.
\end{lemma}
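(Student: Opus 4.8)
The plan is to show, for each $\gamma \in M' \cap N'$, that $\gamma < \beta_{M,N}$ by exhibiting an element $\beta \in \Lambda_M \cap \Lambda_N$ with $\beta > \gamma$; since $\beta_{M,N}$ is by definition the largest element of $\Lambda_M \cap \Lambda_N$, this would give $\gamma < \beta \leq \beta_{M,N}$. The natural candidate is $\beta := \min(\Lambda \setminus (\gamma + 1))$, the least element of $\Lambda$ strictly above $\gamma$, which exists because $\Lambda$ is cofinal in $\omega_2 \cap \cof(\omega_1)$ and hence in $\omega_2$, while $\gamma < \omega_2$. By the symmetry between $M$ and $N$ it then suffices to prove $\beta \in \Lambda_M$, i.e.\ that $\beta = \min(\Lambda \setminus \sup(M \cap \beta))$.

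Writing $\alpha := \sup(M \cap \beta)$, my first step would be to show $\alpha \geq \gamma$, splitting on the two ways $\gamma$ can belong to $M' = (M \cap \omega_2) \cup \lim(M \cap \omega_2)$. If $\gamma \in M \cap \omega_2$, then elementarity gives $\gamma + 1 \in M$; since every element of $\Lambda$ is a limit ordinal and $\beta > \gamma$, we have $\gamma + 1 < \beta$, so $\gamma + 1 \in M \cap \beta$ and therefore $\alpha > \gamma$. If instead $\gamma \in \lim(M \cap \omega_2)$, then $\sup(M \cap \gamma) = \gamma$ and $M \cap \gamma \subseteq M \cap \beta$, so $\alpha \geq \gamma$; moreover in this case $\gamma$ is a limit point of the countable set $M \cap \omega_2$, so $\cf(\gamma) = \omega$ and hence $\gamma \notin \Lambda$.

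The second step is to verify $\min(\Lambda \setminus \alpha) = \beta$, using the fact that $\Lambda \cap (\gamma, \beta) = \emptyset$, which is immediate from the minimality in the definition of $\beta$. If $\alpha > \gamma$, then $[\alpha, \beta) \subseteq (\gamma, \beta)$ misses $\Lambda$, and since $\alpha \leq \beta \in \Lambda$ this yields $\min(\Lambda \setminus \alpha) = \beta$. The only remaining possibility is $\alpha = \gamma$, which by the previous paragraph can occur solely in the limit-point case, where $\gamma \notin \Lambda$; then $\Lambda \cap [\gamma, \beta) = \Lambda \cap (\gamma, \beta) = \emptyset$, and again $\min(\Lambda \setminus \alpha) = \beta$. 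This establishes $\beta \in \Lambda_M$, and the symmetric argument gives $\beta \in \Lambda_N$, completing the proof.

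I expect the main obstacle to be exactly the boundary case $\sup(M \cap \beta) = \gamma$: there a stray element of $\Lambda$ sitting at $\gamma$ itself would break the computation $\min(\Lambda \setminus \alpha) = \beta$, since then $\gamma$, rather than $\beta$, would land in $\Lambda_M$. The point that rescues the argument is that this case forces $\gamma$ to be a limit point of a countable set, hence of cofinality $\omega$, so that $\gamma \notin \Lambda \subseteq \cof(\omega_1)$; and when instead $\gamma \in M$, the successor $\gamma + 1 \in M$ pushes $\alpha$ strictly past $\gamma$ and avoids the issue entirely. Confirming that these two mechanisms together cover every $\gamma \in M' \cap N'$ is where the care is needed.
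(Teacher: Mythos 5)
Your proof is correct. The paper itself does not prove this lemma --- it defers to Proposition 2.6 of \cite{jk21}, stating the argument there is nearly identical --- so your write-up serves as a self-contained reconstruction of the omitted proof, and it follows the natural (and almost certainly the same) route: given $\gamma \in M' \cap N'$, produce $\beta := \min(\Lambda \setminus (\gamma+1))$ and verify $\beta \in \Lambda_M \cap \Lambda_N$, so that $\gamma < \beta \le \beta_{M,N}$. You correctly identify and dispose of the only delicate point, the boundary case $\sup(M \cap \beta) = \gamma$: if $\gamma \in M \cap \omega_2$, elementarity puts $\gamma + 1 \in M$, and since $\beta$ has cofinality $\omega_1$ it is a limit ordinal, so $\gamma + 1 \in M \cap \beta$ forces $\sup(M \cap \beta) > \gamma$; while if $\gamma$ is a limit point of the countable set $M \cap \omega_2$, then $\cf(\gamma) = \omega$, so $\gamma \notin \Lambda$ and $\min(\Lambda \setminus \gamma) = \min(\Lambda \setminus (\gamma+1)) = \beta$. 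Both mechanisms are used exactly where needed, and the symmetry between $M$ and $N$ is legitimate since the argument for $\beta \in \Lambda_M$ uses only $\gamma \in M'$.
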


The proof is almost the same as the proof of Proposition 2.6 of \cite{jk21}, 
so we skip it.

If $K$ and $M$ are in $\mathcal X$ and $K \subseteq M$, then an easy argument shows that 
$\Lambda_K \subseteq \Lambda_M$. 
It follows that for all $N$ in $\mathcal X$, 
$\max(\Lambda_K \cap \Lambda_N) \le \max(\Lambda_M \cap \Lambda_N)$. 
So $\beta_{K,N} \le \beta_{M,N}$.

We define relations $<$, $\le$, and $\sim$ on $\mathcal X$. 
Let $M < N$ if $M \cap \beta_{M,N} \in N$. 
Let $M \sim N$ if $M \cap \beta_{M,N} = N \cap \beta_{M,N}$. 
Let $M \le N$ if either $M < N$ or $M \sim N$.

\begin{definition}
A subset $A$ of $\mathcal X$ is said to be \emph{adequate} 
if for all $M$ and $N$ in $A$, either $M < N$, $M \sim N$, or $N < M$.
\end{definition}

Note that any subset of an adequate set is adequate. 
Also if $A$ is finite and adequate, $M \in \mathcal X$, and $A \in M$, then 
$A \cup \{ M \}$ is adequate.

Suppose that $M < N$. 
Then $M \cap \beta_{M,N} \in N$. 
But $\beta_{M,N} \in \Lambda_M$ implies that 
$\beta_{M,N} = \min(\Lambda \setminus (\sup(M \cap \beta_{M,N})))$. 
Hence $\beta_{M,N}$ is definable in $H(\lambda)$ from 
$M \cap \beta_{M,N}$. 
Since $N$ is elementary in $H(\lambda)$, 
$\beta_{M,N}$ is in $N$.

\begin{lemma}
Let $M$ and $N$ be in $\mathcal X$ and $\beta \in \Lambda$.
\begin{enumerate}
\item If $M \cap \omega_2 \subseteq \beta$, then $\beta_{M,N} \le \beta$.
\item If $\beta < \beta_{M,N}$ and $\{ M, N \}$ is adequate, then 
$M \cap N \cap [\beta,\beta_{M,N}) \ne \emptyset$.
\end{enumerate}
\end{lemma}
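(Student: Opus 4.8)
The plan is to handle the two parts separately, in each case reducing to the defining property of $\Lambda_M$ from Definition 1.1 together with Lemma 1.2.

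For part (1), I would show that the hypothesis $M \cap \omega_2 \subseteq \beta$ forces every element of $\Lambda_M$ to lie below $\beta$; since $\beta_{M,N} \in \Lambda_M$, this immediately yields the conclusion. Concretely, suppose toward a contradiction that some $\gamma \in \Lambda_M$ satisfies $\gamma > \beta$. As $\gamma \in \Lambda \subseteq \omega_2$ and $M \cap \omega_2 \subseteq \beta < \gamma$, every ordinal of $M$ below $\gamma$ already lies below $\beta$, so $M \cap \gamma = M \cap \omega_2$. Writing $\delta := \sup(M \cap \omega_2) \le \beta$, I then have $\sup(M \cap \gamma) = \delta$, and the defining equation for membership in $\Lambda_M$ becomes $\gamma = \min(\Lambda \setminus \delta)$. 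But $\beta \in \Lambda$ and $\beta \ge \delta$ together witness $\min(\Lambda \setminus \delta) \le \beta$, so $\gamma \le \beta$, contradicting $\gamma > \beta$.

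For part (2), write $\beta^* := \beta_{M,N}$. First I note that the hypotheses $\beta \in \Lambda$ and $\beta < \beta^*$, together with $\beta^* \in \Lambda_M \cap \Lambda_N$, let me apply Lemma 1.2 with $\beta_0 = \beta$ to both models, giving $M \cap [\beta,\beta^*) \ne \emptyset$ and $N \cap [\beta,\beta^*) \ne \emptyset$. The remaining task is to locate a common element, and here I split on the adequacy of $\{M,N\}$. If $M \sim N$, then $M \cap \beta^* = N \cap \beta^*$, so $M \cap [\beta,\beta^*) = N \cap [\beta,\beta^*) = M \cap N \cap [\beta,\beta^*)$, which is nonempty by the above. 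If $M < N$, then by definition $M \cap \beta^* \in N$; the key observation is that a countable set which is a member of $N$ is in fact a subset of $N$, since one may fix a surjection $\omega \to M \cap \beta^*$ inside $N$ and use $\omega \subseteq N$. Hence $M \cap \beta^* \subseteq N$, so $M \cap [\beta,\beta^*) \subseteq M \cap N$, and this set is nonempty by Lemma 1.2. The case $N < M$ is symmetric.

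The ordinal bookkeeping in part (1) and the identity $M \cap \gamma = M \cap \omega_2$ are routine. The one genuinely load-bearing step is the observation in part (2) that membership of a countable set in the elementary substructure $N$ upgrades to containment $M \cap \beta^* \subseteq N$. This is precisely what converts the purely structural relation $M < N$ into a concrete element of $M \cap N$; without it, the sets $M \cap [\beta,\beta^*)$ and $N \cap [\beta,\beta^*)$ could a priori be disjoint.
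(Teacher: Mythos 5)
Your proof is correct and follows essentially the same route as the paper: part (1) amounts to the paper's application of Lemma 1.2 (you simply inline the minimality argument for $\Lambda_M$ rather than citing the lemma), and part (2) matches the paper's proof, which assumes $M \le N$ without loss of generality, observes that $M \cap \beta_{M,N} \subseteq N$ (using exactly your ``countable member of $N$ implies subset of $N$'' upgrade in the case $M < N$), and then picks $\xi \in M \cap [\beta, \beta_{M,N})$ via Lemma 1.2. No gaps.
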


\begin{proof}
(1) Suppose for a contradiction that $\beta < \beta_{M,N}$. 
Then since $\beta \in \Lambda$ and $\beta_{M,N} \in \Lambda_M$, 
$M \cap [\beta,\beta_{M,N}) \ne \emptyset$ by Lemma 1.2. 
This contradicts that $M \cap \omega_2 \subseteq \beta$.

(2) Without loss of generality assume that $M \le N$. 
Then $M \cap \beta_{M,N} \subseteq N$. 
By Lemma 1.2, fix $\xi$ in $M \cap [\beta,\beta_{M,N})$. 
Then $\xi$ is in $M \cap \beta_{M,N}$ and hence in $N$. 
So $\xi$ is in $M \cap N \cap [\beta,\beta_{M,N})$.
\end{proof}

Next we define remainder points, which describe the overlap of models past their 
comparison point.

\begin{definition}
Let $M$ and $N$ be in $\mathcal X$ and assume that $\{ M, N \}$ is adequate. 
Define $R_M(N)$ as the set of $\beta$ satisfying either:
\begin{enumerate}
\item $N \le M$ and $\beta = \min(N \setminus \beta_{M,N})$, or
\item there is $\gamma \in M \setminus \beta_{M,N}$ such that 
$\beta = \min(N \setminus \gamma)$.
\end{enumerate}
\end{definition}

The set $R_M(N)$ is called 
the set of \emph{remainder points of $N$ over $M$}. 
This set is always finite, since otherwise there would be a common 
limit point of $M$ and $N$ 
greater than $\beta_{M,N}$, contradicting Lemma 1.5. 
For a more detailed proof, see Proposition 2.9 of \cite{jk21}.

Suppose that $M < N$. 
Then by definition, if $\zeta \in R_M(N)$ then there is $\gamma \in M \setminus \beta_{M,N}$ 
such that $\zeta = \min(N \setminus \gamma)$. 
On the other hand, consider $\zeta \in R_N(M)$. 
Since $\beta_{M,N}$ is in $N$ as noted above, 
if $\zeta = \min(M \setminus \beta_{M,N})$, then again there is 
$\gamma \in N \setminus \beta_{M,N}$ such that 
$\zeta = \min(M \setminus \gamma)$, namely $\gamma = \beta_{M,N}$. 
So remainder points of models $M$ and $N$ are given just 
by condition (2) in Definition 1.8 in the case that $M < N$ or $N < M$. 
Condition (1) is only 
relevant when $M \cap \beta_{M,N} = N \cap \beta_{M,N}$.

Given an adequate set $A$, define $R_A$ by letting 
$$
R_A  = \bigcup \{ R_N(M) : M, N \in A \}.
$$

\begin{definition}
For a given set $S \subseteq \omega_2$, a set $A \subseteq \mathcal X$ is 
\emph{$(S)$ adequate} if it is adequate and $R_A \subseteq S$.
\end{definition}

For the rest of the paper we let $\Lambda := \omega_2 \cap \cof(\omega_1)$. 
Note that $\Lambda$ is a definable subset of $H(\lambda)$.

\bigskip

Now we move on to coherent adequate sets. 
We will consider isomorphisms between models in $\mathcal X$. 

Let $M$ and $N$ be in $\mathcal X$ and let $\sigma : M \to N$. 
We say that $\sigma$ is an \emph{isomorphism} if $\sigma$ is 
a bijection and for all $a$ and $b$ in $M$,
\begin{itemize}
\item $a \in b$ iff $\sigma(a) \in \sigma(b)$;
\item $a \in Y$ iff $\sigma(a) \in Y$.
\end{itemize}
In other words, $\sigma$ is an isomorphism if it is an isomorphism 
in the usual model theoretic sense between the structures 
$(M,\in,Y \cap M)$ and $(N,\in,Y \cap N)$. 
We say that $M$ and $N$ are \emph{isomorphic} 
if there exists an isomorphism from $M$ to $N$. 
Note that the isomorphism relation is an equivalence relation.

For a model $M$ in $\mathcal X$, the elementarity of $M$ 
in $H(\lambda)$ 
implies that 
$M$ satisfies the axiom of extensionality. 
It follows that $(M,\in,Y \cap M)$ is isomorphic to a unique transitive structure 
$(\overline M,\in,\overline Y)$ 
by a unique isomorphism $\sigma_M : M \to \overline M$ given by 
the recursive equation $\sigma_M(a) = \sigma_M[a \cap M]$. 
Note that if $a \in M$ is countable, then $a \subseteq M$, so 
$\sigma_M(a) = \sigma_M[a]$.

By the uniqueness of the transitive collapse, 
a standard argument shows that $M$ and $N$ in $\mathcal X$ are isomorphic iff 
the structures $(M,\in,Y \cap M)$ and 
$(N,\in,Y \cap N)$ have the same transitive collapse. 
In that case, $\sigma_N^{-1} \circ \sigma_M$ is an isomorphism 
from $M$ to $N$, which we denote by $\sigma_{M,N}$. 
Also the uniqueness of the transitive collapsing map easily implies that 
$\sigma_{M,N}$ is the unique isomorphism from $M$ to $N$. 
Note that this map satisfies that if $a \in M$ is countable, 
then $\sigma_{M,N}(a) = \sigma_{M,N}[a]$. 
Also by the uniqueness of isomorphisms, 
if $M$, $N$, and $P$ are isomorphic, 
then $\sigma_{M,P} = \sigma_{N,P} \circ \sigma_{M,N}$.

\begin{lemma}
Let $M$, $N$ and $K$ be in $\mathcal X$ such that $M$ and $N$ 
are isomorphic and $K \in M$. 
Let $L := \sigma_{M,N}(K)$. 
Then:
\begin{enumerate}
\item $L \in \mathcal X$;
\item $K$ and $L$ are isomorphic and $\sigma_{K,L} = \sigma_{M,N} \restriction K$.
\end{enumerate}
\end{lemma}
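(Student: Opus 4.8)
The plan is to reduce (1) to two facts---that $L$ is countable and that $L \prec (H(\lambda),\in,Y)$---and to obtain (2) as a direct consequence of the pointwise action of $\sigma := \sigma_{M,N}$ on the countable set $K$. I would begin by noting $K \subseteq M$: since $K$ is a countable member of $M$ and $M \prec (H(\lambda),\in,Y)$, elementarity produces a surjection $f \colon \omega \to K$ with $f \in M$, and as $\omega \subseteq M$ we get $K = \ran(f) \subseteq M$. With $K \subseteq M$ in hand, a standard Tarski--Vaught argument shows $K \prec M$ as $\{\in,Y\}$-structures (both being elementary in $(H(\lambda),\in,Y)$); this is where the hypothesis $K \in \mathcal X$ is actually used.

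The heart of (1) is then a single observation: $\sigma$ is an isomorphism of the structures $(M,\in,Y\cap M)$ and $(N,\in,Y\cap N)$, so it preserves all first-order $\{\in,Y\}$-formulas and therefore carries elementary submodels to elementary submodels. Applying this to $K \prec M$ yields $\sigma[K] \prec N$. Now the excerpt's remark that $\sigma(a) = \sigma[a]$ for countable $a \in M$ applies to $K$, giving $L = \sigma(K) = \sigma[K]$; hence $L \prec N$. Composing with $N \prec (H(\lambda),\in,Y)$ gives $L \prec (H(\lambda),\in,Y)$, and $L$ is countable as the image of the countable set $K$ under a bijection. Thus $L \in \mathcal X$. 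I expect this transfer to be the crux: the only point requiring care is the (routine but essential) fact that a model-theoretic isomorphism preserves full first-order truth in the language $\{\in,Y\}$, not merely the atomic relations, so that ``$K \prec M$'' survives the passage to $N$.

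For (2), the identity $\sigma[K] = L$ already established shows that $\sigma \restriction K$ is a bijection of $K$ onto $L$, and being a restriction of the isomorphism $\sigma$ it preserves $\in$ and membership in $Y$; hence $\sigma \restriction K$ is an isomorphism of $(K,\in,Y\cap K)$ onto $(L,\in,Y\cap L)$, so $K$ and $L$ are isomorphic. Finally, since (1) gives $L \in \mathcal X$, the uniqueness of the isomorphism between members of $\mathcal X$ noted earlier in the excerpt forces $\sigma_{K,L} = \sigma \restriction K$. Part (2) is thus pure bookkeeping once the pointwise formula $\sigma(K) = \sigma[K]$ and the uniqueness of transitive collapses are invoked.
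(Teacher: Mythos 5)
Your proof is correct and takes essentially the same route as the paper's: part (1) is reduced to showing $L \prec N$ by transferring the elementarity of $K$ in $M$ (which holds since $K \subseteq M$ and both are in $\mathcal X$) across the isomorphism $\sigma_{M,N}$, and your part (2) --- the restriction $\sigma_{M,N} \restriction K$ is a bijection onto $L$ preserving $\in$ and $Y$, followed by uniqueness of isomorphisms --- matches the paper's argument verbatim. The only difference is bookkeeping: you invoke the external model-theoretic fact that an isomorphism carries elementary submodels to elementary submodels, whereas the paper implements the very same transfer internally, via the relativized formulas ${\ulcorner\varphi\urcorner}^{\mathfrak L}$ and ${\ulcorner\varphi\urcorner}^{\mathfrak K}$ evaluated in $N$ and $M$, using $K \prec M$ at exactly the point where you do.
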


\begin{proof}
(1) As $L \subseteq N$ and $(N,\in,N \cap Y) \prec (H(\lambda),\in,Y)$, 
it suffices to show that $\mathfrak L := (L,\in,L \cap Y) \prec (N,\in,N \cap Y)$. 
Let $\mathfrak K := (K,\in,M \cap Y)$. 
By the elementarity of $M$ and $N$, the predicates $K \cap Y$ are $L \cap Y$ are 
in $M$ and $N$ respectively. 
Let $b_1, \ldots, b_k$ be in $L$ and let $\varphi(x_1,\ldots,x_k)$ be a formula in 
the language of the structure $(H(\lambda),\in,Y)$. 
Let $a_i := \sigma_{N,M}(b_i)$ for $i = 1, \ldots, k$. 
Then $(L,\in,L \cap Y) \models \varphi[b_1,\ldots,b_k]$ iff 
$N \models {\ulcorner\varphi\urcorner}^{\mathfrak L}[b_1,\ldots,b_k]$ iff 
$M \models {\ulcorner\varphi\urcorner}^{\mathfrak K}[a_1,\ldots,a_k]$ iff 
$M \models \varphi[a_1,\ldots,a_k]$ iff 
$N \models \varphi[b_1,\ldots,b_k]$, where the third equivalence follows from 
the fact that $\mathfrak K$ is an elementary substructure of $(M,\in,M \cap Y)$.

(2) Since $K$ is countable, $\sigma_{M,N}(K) = \sigma_{M,N}[K]$. 
So $\sigma_{M,N} \restriction K$ is a bijection from $K$ to $L$. 
It is obvious that $\sigma_{M,N} \restriction K$ preserves the 
predicates $\in$ and $Y$ since $\sigma_{M,N}$ does. 
So $\sigma_{M,N} \restriction K$ is an isomorphism of $K$ to $L$. 
By the uniqueness of isomorphisms, $\sigma_{M,N} \restriction K = \sigma_{K,L}$.
\end{proof}

\begin{lemma}
Suppose that $M$ and $N$ are in $\mathcal X$ and are isomorphic. 
Let $\sigma := \sigma_{M,N}$. 
Assume that $K$ and $L$ are in $M \cap \mathcal X$. 
Then:
\begin{enumerate}
\item $\sigma(\beta_{K,L}) = \beta_{\sigma(K),\sigma(L)}$;
\item $K < L$ iff $\sigma(K) < \sigma(L)$;
\item $K \sim L$ iff $\sigma(K) \sim \sigma(L)$;
\item $K \le L$ iff $\sigma(K) \le \sigma(L)$;
\item $\sigma(R_K(L)) = R_{\sigma(K)}(\sigma(L))$;
\item if $\{ K, L \}$ is adequate then $\{ \sigma(K), \sigma(L) \}$ 
is adequate.
\end{enumerate}
\end{lemma}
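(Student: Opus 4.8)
The plan is to observe that every notion in the statement---the comparison point $\beta_{K,L}$, the relations $<$, $\sim$, $\le$, the remainder set $R_K(L)$, and adequacy of a pair---is definable in $(H(\lambda),\in,Y)$ from the parameters $K$ and $L$, using only the auxiliary sets $\omega_2$ and $\Lambda = \omega_2 \cap \cof(\omega_1)$, both of which are parameter-free definable in $(H(\lambda),\in)$. Because $\sigma = \sigma_{M,N}$ is an isomorphism between $(M,\in,Y\cap M)$ and $(N,\in,Y\cap N)$, and both of these are elementary in $(H(\lambda),\in,Y)$, the map $\sigma$ is a partial elementary map of $(H(\lambda),\in,Y)$ into itself with domain $M$: for every formula $\varphi$ in the language $\{\in,Y\}$ and all $a_1,\ldots,a_k \in M$,
\[
H(\lambda) \models \varphi[a_1,\ldots,a_k] \iff H(\lambda) \models \varphi[\sigma(a_1),\ldots,\sigma(a_k)].
\]
In particular $\sigma$ fixes the definable sets $\omega_2$ and $\Lambda$. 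I would record this first, and then obtain each clause as an instance of partial elementarity.

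For clause (1) I would let $\psi(b,k,l)$ be the formula of $\{\in,Y\}$ expressing ``$b$ is the largest element of $\Lambda_k \cap \Lambda_l$'' (unwinding the definitions of $\Lambda_k,\Lambda_l$ and of $\Lambda,\omega_2$). Since $K,L \in M$ and $\psi$ is elementary, $\beta_{K,L} \in M$, so $\sigma(\beta_{K,L})$ is defined; from $H(\lambda) \models \psi(\beta_{K,L},K,L)$ and partial elementarity we get $H(\lambda) \models \psi(\sigma(\beta_{K,L}),\sigma(K),\sigma(L))$, which, since $\sigma(K)$ and $\sigma(L)$ lie in $\mathcal X$ by the preceding lemma, says exactly $\sigma(\beta_{K,L}) = \beta_{\sigma(K),\sigma(L)}$.

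Clauses (2)--(4) then follow by a short computation. As $K$ is countable and $K \in M$ we have $K \subseteq M$ and $\sigma(K) = \sigma[K]$, whence
\[
\sigma(K \cap \beta_{K,L}) = \sigma[K] \cap \sigma(\beta_{K,L}) = \sigma(K) \cap \beta_{\sigma(K),\sigma(L)},
\]
using clause (1) for the last equality, and likewise for $L$. Since $\sigma$ is an $\in$-preserving bijection, $K < L$, i.e.\ $K \cap \beta_{K,L} \in L$, transfers to $\sigma(K) \cap \beta_{\sigma(K),\sigma(L)} \in \sigma(L)$, i.e.\ $\sigma(K) < \sigma(L)$, and conversely; this is (2). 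The same argument with $=$ in place of $\in$ gives (3), and (4) is the disjunction of (2) and (3). Clause (6) is then immediate, since adequacy of $\{K,L\}$ is $K<L \lor K\sim L \lor L<K$, each disjunct transferring; this also guarantees that $R_{\sigma(K)}(\sigma(L))$ is defined. For clause (5), $R_K(L)$ is a finite subset of $M$ definable from $K$ and $L$, so $\sigma(R_K(L)) = \sigma[R_K(L)]$, and its defining condition---built from $\beta_{K,L}$, the relation $\le$, $\min$, and set difference, all preserved by $\sigma$ via clauses (1) and (4)---transfers to give $\sigma[R_K(L)] = R_{\sigma(K)}(\sigma(L))$.

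The main obstacle is clause (1): it is the one place requiring the genuine input that $\beta_{K,L}$ is definable from $K,L$ over $(H(\lambda),\in,Y)$, that it lands in $M$, and that $\sigma$ fixes $\Lambda$ and $\omega_2$. Once the comparison point is shown to be preserved, the remaining clauses are formal, the only recurring point of care being that $\sigma$ commutes with the set operations involved (intersection with the ordinal $\beta_{K,L}$, $\min$, and set difference), which holds because the objects in question are countable subsets of $M$, on which $\sigma$ acts as the pointwise image map.
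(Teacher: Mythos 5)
Your proof is correct and follows essentially the same route as the paper, whose entire argument is the observation that $\beta_{K,L}$, the relations $<$, $\sim$, $\le$, and $R_K(L)$ are definable in $(H(\lambda),\in,Y)$ from $K$ and $L$, so that the isomorphism $\sigma$, being a partial elementary map of $(H(\lambda),\in,Y)$ by the elementarity of $M$ and $N$, preserves them all. You have simply filled in the details (partial elementarity, preservation of $\Lambda$ and $\omega_2$, and that $\sigma$ acts as the pointwise image on countable sets) that the paper leaves implicit.
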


\begin{proof}
The lemma follows from the fact for any $P$ and $Q$ in $\mathcal X$, 
the objects and relations 
$\beta_{P,Q}$, $P < Q$, $P \le Q$, and $R_P(Q)$ are definable in 
$H(\lambda)$ from $P$ and $Q$. 
\end{proof}

We now introduce an additional requirement on isomorphisms. 
Let us say that $M$ and $N$ in $\mathcal X$ are \emph{strongly isomorphic} 
if they are isomorphic and for all $a \in M \cap N$, 
$\sigma_{M,N}(a) = a$. 
We write $M \cong N$ to indicate that $M$ and $N$ are strongly isomorphic.

\begin{lemma}
Let $M$, $N$ and $K$ be in $\mathcal X$ such that $M$ and $N$ 
are strongly isomorphic and $K \in M$. 
Let $L := \sigma_{M,N}(K)$. 
Then $K$ and $L$ are strongly isomorphic.
\end{lemma}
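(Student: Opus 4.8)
The plan is to reduce the strong isomorphism of $K$ and $L$ to the already-known strong isomorphism of $M$ and $N$, exploiting that the collapse maps restrict correctly. First I would invoke Lemma 1.12: since $M$ and $N$ are in particular isomorphic and $K \in M$ with $L = \sigma_{M,N}(K)$, that lemma yields both that $K$ and $L$ are isomorphic and that $\sigma_{K,L} = \sigma_{M,N} \restriction K$. Thus $K$ and $L$ are isomorphic, and it remains only to verify the ``strong'' clause: for every $a \in K \cap L$, we must show $\sigma_{K,L}(a) = a$.

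The key observation is that $K \cap L \subseteq M \cap N$. Indeed, $K$ is countable and $K \in M$, so by the remark in the text (``if $a \in M$ is countable, then $a \subseteq M$'') we get $K \subseteq M$. Likewise $L = \sigma_{M,N}(K)$ lies in $\ran(\sigma_{M,N}) = N$ and is countable (it is in $\mathcal X$ by Lemma 1.12), so $L \subseteq N$. Hence any $a \in K \cap L$ also lies in $M \cap N$.

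Now fix $a \in K \cap L$. Since $\sigma_{K,L} = \sigma_{M,N} \restriction K$ and $a \in K$, we have $\sigma_{K,L}(a) = \sigma_{M,N}(a)$. But $a \in M \cap N$, and $M \cong N$, so by the definition of strong isomorphism $\sigma_{M,N}(a) = a$. Therefore $\sigma_{K,L}(a) = a$. As $a$ was arbitrary in $K \cap L$, the map $\sigma_{K,L}$ fixes $K \cap L$ pointwise, and so $K \cong L$.

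I do not anticipate a genuine obstacle here, as the content is essentially bookkeeping about the collapse maps. The one point requiring care is the containment $K \cap L \subseteq M \cap N$, which is exactly what allows the strong-isomorphism hypothesis on $M$ and $N$ to be applied to elements of $K \cap L$; once that is secured, the restriction identity $\sigma_{K,L} = \sigma_{M,N} \restriction K$ supplied by Lemma 1.12 does the rest.
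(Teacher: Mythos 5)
Your proof is correct and is essentially the paper's own argument: invoke the lemma giving that $K$ and $L$ are isomorphic with $\sigma_{K,L} = \sigma_{M,N} \restriction K$ (note this is Lemma 1.10 in the paper's numbering, not Lemma 1.12 --- the statement you are proving \emph{is} Lemma 1.12), observe that $K \cap L \subseteq M \cap N$, and apply the strong isomorphism of $M$ and $N$ to fix each $a \in K \cap L$. The only difference is that you spell out why $K \subseteq M$ and $L \subseteq N$ (countability of $K$ and $L$), a step the paper leaves implicit.
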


\begin{proof}
By Lemma 1.10, $K$ and $L$ are isomorphic and $\sigma_{K,L} = \sigma_{M,N} \restriction K$. 
Let $a \in K \cap L$ be given. 
Then $a \in M \cap N$. 
So $\sigma_{K,L}(a) = \sigma_{M,N}(a) = a$.
\end{proof}

\begin{lemma}
Suppose that $N$, $N'$, and $N^*$ are in $\mathcal X$ and are strongly isomorphic. 
Let $a \in N' \cap N^*$. 
Then $\sigma_{N',N}(a) = \sigma_{N^*,N}(a)$.
\end{lemma}

\begin{proof}
Since $a \in N' \cap N^*$, $\sigma_{N',N^*}(a) = a$. 
So $\sigma_{N',N}(a) = \sigma_{N^*,N}(\sigma_{N',N^*}(a)) = 
\sigma_{N^*,N}(a)$.
\end{proof}

\begin{lemma}
Let $N$, $N'$, $N^*$, $K$, and $L$ be in $\mathcal X$ such that 
$N$, $N'$, and $N^*$ are strongly isomorphic, $K$ and $L$ are strongly isomorphic, 
and $K$ and $L$ are in $N$. 
Let $M := \sigma_{N,N'}(K)$ and $P := \sigma_{N,N^*}(L)$. 
Then $M$ and $P$ are strongly isomorphic.
\end{lemma}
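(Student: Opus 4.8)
The plan is to split the statement into the ordinary isomorphism part and the strong (intersection-fixing) part. For the first part I would apply Lemma 1.13 twice: since $N$ and $N'$ are strongly isomorphic and $K \in N$ with $M = \sigma_{N,N'}(K)$, the models $K$ and $M$ are strongly isomorphic; symmetrically $L$ and $P$ are strongly isomorphic. Together with the hypothesis that $K$ and $L$ are strongly isomorphic, and the fact that ordinary isomorphism is an equivalence relation (equivalently, having a common transitive collapse), this shows that $M$, $K$, $L$, and $P$ all have the same transitive collapse. Hence $M$ and $P$ are isomorphic and the unique isomorphism $\sigma_{M,P}$ exists.

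The substance of the lemma lies in the strong part, and this is also where I expect the main obstacle. Strong isomorphism is \emph{not} transitive: a point $a \in M \cap P$ need not belong to $K$ or to $L$, so one cannot simply chain the fixing properties of $\sigma_{M,K}$, $\sigma_{K,L}$, $\sigma_{L,P}$ along $M \cong K \cong L \cong P$ to conclude $\sigma_{M,P}(a) = a$. The observation that rescues the argument is that the collapse image of such an $a$ lands inside $K \cap L$, precisely where the given strong isomorphism $K \cong L$ is available. Concretely, fix $a \in M \cap P$. Since $M \subseteq N'$ and $P \subseteq N^*$ we have $a \in N' \cap N^*$, so Lemma 1.14 produces a single element $b := \sigma_{N',N}(a) = \sigma_{N^*,N}(a)$. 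Because $\sigma_{N',N}$ preserves $\in$ and carries $M$ to $K$, we get $b \in K$; symmetrically, $\sigma_{N^*,N}$ carries $P$ to $L$, so $b \in L$. Thus $b \in K \cap L$, and since $K$ and $L$ are strongly isomorphic, $\sigma_{K,L}(b) = b$.

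Finally I would compute $\sigma_{M,P}(a)$ by factoring $\sigma_{M,P} = \sigma_{L,P} \circ \sigma_{K,L} \circ \sigma_{M,K}$, which is legitimate by the uniqueness of isomorphisms. Using Lemma 1.10 to identify $\sigma_{M,K} = \sigma_{N',N} \restriction M$ and $\sigma_{L,P} = \sigma_{N,N^*} \restriction L$, the computation runs $\sigma_{M,P}(a) = \sigma_{N,N^*}(\sigma_{K,L}(\sigma_{N',N}(a))) = \sigma_{N,N^*}(\sigma_{K,L}(b)) = \sigma_{N,N^*}(b) = a$, where the last equality holds because $b = \sigma_{N^*,N}(a)$. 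Since $a \in M \cap P$ was arbitrary, $\sigma_{M,P}$ fixes $M \cap P$ pointwise, and therefore $M$ and $P$ are strongly isomorphic.
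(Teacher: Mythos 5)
Your proposal is correct and takes essentially the same route as the paper's own proof: send $a \in M \cap P$ into $N$ via the agreement of $\sigma_{N',N}$ and $\sigma_{N^*,N}$ on $N' \cap N^*$, observe that the image $b$ lands in $K \cap L$ where the strong isomorphism $\sigma_{K,L}$ fixes it, and then factor $\sigma_{M,P}$ through $\sigma_{M,K}$, $\sigma_{K,L}$, $\sigma_{L,P}$ using the restriction identities from Lemma 1.10. The only flaw is that your lemma citations are off by one: the strong-isomorphism transfer result you call Lemma 1.13 is Lemma 1.12, and the agreement-on-intersections result you call Lemma 1.14 is Lemma 1.13 --- worth fixing, since as written the latter citation refers to the very lemma being proved.
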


\begin{proof}
By Lemma 1.10, $K$ and $M$ are isomorphic and $L$ and $P$ are isomorphic. 
Since $K$ and $L$ are isomorphic and being isomorphic is an equivalence relation, 
$M$ and $P$ are isomorphic. 
To show that $M$ and $P$ are strongly isomorphic, 
let $a \in M \cap P$ and we will show that $\sigma_{M,P}(a) = a$. 
Let $b := \sigma_{N',N}(a)$. 
Since $a \in M \cap P$, $a \in N' \cap N^*$. 
By Lemma 1.13, $\sigma_{N^*,N}(a) = \sigma_{N',N}(a) = b$. 
Now $a \in M$ implies that $b \in K$, and 
$a \in P$ implies that $b \in L$. 
So $b \in K \cap L$. 
Since $K$ and $L$ are strongly isomorphic, $\sigma_{K,L}(b) = b$. 
So $\sigma_{M,P}(a) = \sigma_{L,P}(\sigma_{K,L}(\sigma_{M,K}(a))) = 
\sigma_{N,N^*}(\sigma_{K,L}(\sigma_{N',N}(a))) = 
\sigma_{N,N^*}(\sigma_{K,L}(b)) = \sigma_{N,N^*}(b) = a$. 
\end{proof}

\begin{definition}
Let $A$ be a subset of $\mathcal X$. 
We say that $A$ is \emph{coherent adequate} if $A$ is adequate, and 
for all $M$ and $N$ in $A$:
\begin{enumerate}
\item if $M \sim N$, then $M$ and $N$ 
are strongly isomorphic;
\item if $M < N$, then there exists $N' \in A$ isomorphic to $N$ 
such that $M \in N'$;
\item if $M$ and $N$ are isomorphic and $K \in M \cap A$, then 
$\sigma_{M,N}(K) \in A$.
\end{enumerate}
\end{definition}

Let $M$ and $N$ be in $\mathcal X$. 
Since $\omega_1 \le \beta_{M,N}$, $M \sim N$ 
implies that $M \cap \omega_1 = N \cap \omega_1$. 
Also if $M < N$, then $M \cap \omega_1$ is an initial segment of $M \cap \beta_{M,N}$ 
and hence is in $N$. 
It follows that if $\{ M, N \}$ is adequate, then 
$M \sim N$ iff $M \cap \omega_1 = N \cap \omega_1$. 
Also if $M$ and $N$ are isomorphic, then since $\omega_1$ is definable in 
$H(\lambda)$, $\sigma_{M,N}(\omega_1) = \omega_1$. 
It easily follows that $M \cap \omega_1 = N \cap \omega_1$.

As a consequence of these observations, if $A$ is coherent adequate and $M$ and 
$N$ are in $A$, then the following are equivalent:
\begin{itemize}
\item $M \sim N$; 
\item $M \cap \omega_1 = N \cap \omega_1$; 
\item $M$ and $N$ are isomorphic;
\item $M$ and $N$ are strongly isomorphic.
\end{itemize}
Moreover, $M < N$ iff $M \cap \omega_1 < N \cap \omega_1$.

Given a set $S \subseteq \omega_2$, a set 
$A \subseteq \mathcal X$ is \emph{$(S)$ coherent adequate} if 
$A$ is coherent adequate and $R_A \subseteq S$.

\begin{lemma}
Let $A$ be a finite coherent adequate set. 
\begin{enumerate}
\item If $M \in \mathcal X$ and $A \in M$, then $A \cup \{ M \}$ is coherent adequate.
\item $M \cap A$ is coherent adequate.
\end{enumerate}
\end{lemma}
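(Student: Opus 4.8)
The plan is to verify each of the three conditions of coherent adequacy for the two sets in question, leaning on the corresponding properties already known for $A$ and on the lemmas about isomorphisms established above.

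For part (1), suppose $M \in \mathcal X$ and $A \in M$. First I would note that adequacy of $A \cup \{M\}$ is already guaranteed by the remark following Definition 1.6, which states that if $A$ is finite and adequate and $A \in M$, then $A \cup \{M\}$ is adequate. So the work is to check conditions (1)--(3) of Definition 1.16 for pairs involving $M$. The key observation is that since $A \in M$ and $A$ is finite, every element $N \in A$ satisfies $N \in M$; hence for each such $N$ we have $N \cap \omega_2 \subseteq M \cap \omega_2$, and by Lemma 1.6(1) together with elementarity one shows $N < M$ rather than $N \sim M$ or $M < N$. This makes the verification asymmetric and manageable: condition (1) is vacuous for the new pairs since $M$ is not $\sim$-equivalent to any $N \in A$; for condition (2), the requirement ``if $N < M$ then there is $N' \in A \cup \{M\}$ isomorphic to $N$ with $N \in N'$'' is witnessed by taking $N' = M$ itself, since $N \in M$; and condition (3) is where I would expect genuine case analysis. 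For (3), if $N$ and $N''$ in $A \cup \{M\}$ are isomorphic and $K \in N \cap (A \cup \{M\})$, I would split into the case where both lie in $A$ (handled by coherence of $A$) and the case where one of them is $M$. When $M$ is isomorphic to some $N \in A$, I would use the fact that $M$ and $N$ share the same transitive collapse and that $A \in M$ to locate the image $\sigma_{M,N}(K)$ inside $A$.

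For part (2), the claim is that $M \cap A$ is coherent adequate for any $M \in \mathcal X$. Since $M \cap A \subseteq A$, adequacy is immediate from the remark that any subset of an adequate set is adequate. I would then check the three coherence conditions. Conditions (1) and (2) restrict to statements about pairs in the smaller set, and since strong isomorphism and the comparison relations are absolute notions not depending on the ambient set, condition (1) transfers directly. The delicate conditions are (2) and (3), which assert the \emph{existence} of certain witnesses inside the set: for (2), if $N_0, N_1 \in M \cap A$ with $N_0 < N_1$, coherence of $A$ gives some $N_1' \in A$ isomorphic to $N_1$ with $N_0 \in N_1'$, and I must argue that such a witness can be found inside $M$ as well, using that $N_0, N_1 \in M$ and elementarity of $M$ to reflect the existential statement. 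Similarly for (3), if $N, N''\in M \cap A$ are isomorphic and $K \in N \cap (M \cap A)$, coherence of $A$ places $\sigma_{N,N''}(K)$ in $A$, and I would use $N, N'', K \in M$ plus $\sigma_{N,N''}(K) = \sigma_{N,N''}[K]$ (as $K$ is countable and hence a subset of $N \subseteq M$) to conclude the image lies in $M$, whence in $M \cap A$.

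The main obstacle I anticipate is the reflection arguments in part (2) for conditions (2) and (3): the issue is that coherence of $A$ asserts witnesses exist in $A$, but I need them in the potentially much smaller set $M \cap A$, and $M \cap A$ need not be an element of $M$ nor definable inside $M$. The resolution should be that the relevant witnesses are in fact uniquely determined---for condition (3) the image $\sigma_{N,N''}(K)$ is a specific set computed by the collapse, and being countable it is a subset of $M$ and its membership in $A$ combined with $A$-reflection or direct computation forces it into $M \cap A$. For condition (2), the witness $N_1'$ may not be unique, so I would instead argue that $N_0 \cap \omega_1 < N_1 \cap \omega_1$ reflects into $M$, and use the equivalence established after Definition 1.16 (that $N_0 < N_1$ iff $N_0 \cap \omega_1 < N_1 \cap \omega_1$, and that isomorphism is detected by $\cap\,\omega_1$) to find a suitable $N_1'$ among the finitely many elements of $A \cap M$ isomorphic to $N_1$. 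Making these membership-reflection steps precise, rather than the routine transfer of the relational conditions, is where the real content lies.
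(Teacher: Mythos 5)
Part (1) of your proposal is essentially right, and in fact simpler than you anticipate: since $A$ is finite and $A \in M$, every $N \in A$ is a countable element of $M$, hence $N \subseteq M$ and $N \cap \omega_1 < M \cap \omega_1$. So $M$ is not isomorphic to any member of $A$, the isomorphic pairs in requirement (3) must lie entirely inside $A$, and the case you earmark for ``genuine case analysis'' (where $M$ is isomorphic to some $N \in A$) is vacuous; the paper accordingly dismisses (1) as immediate. Your treatment of requirement (3) in part (2) is also fine: $\sigma_{N,N''}(K) \in A$ by coherence of $A$, and $\sigma_{N,N''}(K) \in N'' \subseteq M$ since $N''$ is a countable element of $M$.

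The genuine gap is requirement (2) in part (2), where your argument is circular. Coherence of $A$ produces a witness $N' \in A$ isomorphic to $N$ with $K \in N'$, and the entire difficulty is that $N'$ may fail to lie in $M$; as you yourself observe, elementarity of $M$ cannot repair this because $A \notin M$. But your proposed resolution --- ``find a suitable $N_1'$ among the finitely many elements of $A \cap M$ isomorphic to $N_1$'' --- is exactly the statement to be proved, and you give no mechanism producing such an element. The paper's proof supplies the missing idea, and it hinges on a hypothesis your reading has discarded: part (2) is intended (and is later used, right after Definition 7.1) with $M \in A$, not with $M$ an arbitrary member of $\mathcal X$. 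Granting $M \in A$, the argument runs: if the witness $N'$ is not in $M$, then $N' \cap \omega_1 = N \cap \omega_1 < M \cap \omega_1$, so $N' < M$ by the equivalences listed after Definition 1.15 (this already needs $N', M \in A$); coherence of $A$ then yields $M' \in A$ isomorphic --- hence strongly isomorphic --- to $M$ with $N' \in M'$; now transfer the witness into $M$ by setting $N^* := \sigma_{M',M}(N')$. Requirement (3) for $A$ gives $N^* \in A$, the map $\sigma_{M',M}$ lands in $M$ so $N^* \in M \cap A$, and $K \in N^*$ because $K \in M \cap M'$ (as $K \in N' \subseteq M'$) is fixed by the strong isomorphism, so $K = \sigma_{M',M}(K) \in \sigma_{M',M}(N') = N^*$. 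This ``reflect the witness through a copy of $M$'' step is the whole content of the lemma; it uses $M \in A$ three times and is absent from your proposal. Without the hypothesis $M \in A$ there is no substitute for it: for sets of the form $A = \{K, K_1, N, N'\}$ produced by the amalgamation machinery of Section 5 (with $K_1 \in N$, $N \cong N'$, $K = \sigma_{N,N'}(K_1) \notin N$), a countable $M \prec (H(\lambda),\in,Y)$ containing $N$ and $K$ but omitting $N'$ would make $M \cap A = \{K, K_1, N\}$ fail requirement (2) outright, so no proof along your lines can succeed for arbitrary $M \in \mathcal X$.
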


\begin{proof}
(1) We already observed that $A \cup \{ M \}$ is adequate, and the coherent properties 
are immediate. 
(2) Requirements (1) and (3) in the definition of coherent adequate are obvious. 
For requirement (2), suppose that $K$ and $N$ are in $A \cap M$ 
and $K < N$. 
Since $A$ is coherent, there is $N' \in A$ isomorphic to $N$ such that $K \in N'$. 
If $N' \in M$ then we are done, so assume not. 
As $N' \cap \omega_1 = N \cap \omega_1 < M \cap \omega_1$, 
$N' < M$. 
So there is $M' \in A$ isomorphic to $M$ with $N' \in M'$. 
Let $N^* = \sigma_{M',M}(N')$, which is in $M \cap A$. 
Since $K \in N'$ and $N' \in M'$, $K \in M'$. 
But also $K \in M$, so $\sigma_{M',M}(K) = K$. 
Since $K \in N'$, $K = \sigma_{M',M}(K) \in \sigma_{M,M'}(N') = N^*$. 
So $K \in N^*$, $N^*$ is isomorphic to $N$, and 
$N^* \in M \cap A$.
\end{proof}

\begin{lemma}
Let $A$ be a finite coherent adequate set. 
Suppose that $\{ M_0, \ldots, M_k \}$ is adequate and consists 
of strongly isomorphic sets. 
Assume that $A \in M_0 \cap \cdots \cap M_k$. 
Then $A \cup \{ M_0, \ldots, M_k \}$ is coherent adequate.
\end{lemma}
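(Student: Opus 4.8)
The plan is to set $B := A \cup \{M_0,\dots,M_k\}$ and verify adequacy together with the three clauses of Definition 1.16, after first recording the structural facts that trivialize every ``cross'' case. Since $A \in M_i$ and $A$ is finite, elementarity gives $A \subseteq M_i$, so every $P \in A$ satisfies $P \in M_i$. Adequacy of $B$ is then immediate: pairs inside $A$ are comparable because $A$ is adequate, pairs among the $M_i$ are comparable by hypothesis, and for $P \in A$ the observation preceding the lemma (that $A \cup \{M_i\}$ is adequate whenever $A \in M_i$) makes $\{P,M_i\}$ adequate. The key refinement, needed for coherence rather than adequacy, is that in fact $P < M_i$ for every $P \in A$ and every $i \le k$: writing $\delta_Q := Q \cap \omega_1$, the ordinal $\delta_P$ is definable from $P \in M_i$, so $\delta_P \in M_i \cap \omega_1 = \delta_{M_i}$, giving $\delta_P < \delta_{M_i}$. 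Since $\{P,M_i\}$ is adequate, the characterization ``$M \sim N$ iff $M \cap \omega_1 = N \cap \omega_1$'' rules out $P \sim M_i$, and $M_i < P$ would put $\delta_{M_i} = M_i \cap \omega_1$ into $P$, forcing $\delta_{M_i} < \delta_P$, a contradiction; hence $P < M_i$.

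I would next collect the corresponding facts for the $M_i$ and the membership structure. As the $M_i$ are pairwise strongly isomorphic, they are isomorphic, so $\delta_{M_i} = \delta_{M_j}$, and adequacy of $\{M_i,M_j\}$ forces $M_i \sim M_j$ (never $M_i < M_j$). Foundation then gives $M_j \notin M_i$ for $i \ne j$ (otherwise $M_j < M_i$, contradicting $M_j \sim M_i$), and $M_i \notin P$ for $P \in A$ (otherwise the $2$-cycle $M_i \in P \in M_i$, using $P \in A \subseteq M_i$). Thus no $M_l$ lies in any member of $A$ and no $M_l$ lies in any $M_i$.

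With these in hand the three clauses fall out by case analysis. For clause (1), any $\sim$-pair must lie entirely in $A$ or entirely among the $M_i$, since no cross pair is $\sim$ (each is a strict $<$); the first case uses coherence of $A$ and the second is the hypothesis. For clause (2), a $<$-pair is either inside $A$, where coherence of $A$ supplies a witness $N' \in A \subseteq B$, or of the form $P < M_i$, where I take $N' := M_i$ (isomorphic to itself, with $P \in M_i$); the directions $M_i < P$ and $M_i < M_j$ do not occur. For clause (3), an isomorphic pair cannot be a cross pair (a cross pair is a strict $<$, hence neither $\sim$ nor isomorphic), so it lies in $A$ or among the $M_i$; and any $K \in M \cap B$ with $M$ one of these models is forced into $A$ by the foundation facts above. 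The $A$-pair case is then coherence of $A$, and the $M_i$-pair case uses that $K \in A \subseteq M_i \cap M_j$ is fixed by $\sigma_{M_i,M_j}$ (strong isomorphism), so $\sigma_{M,N}(K) = K \in B$.

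The main obstacle is the strict relation $P < M_i$ together with the exclusion of $M_i < P$: this is exactly what guarantees that clause (2) imposes no new witness obligations beyond the $M_i$ themselves, and it is where the $\omega_1$-characterization of $\sim$ and the fact that $M < N$ places $M \cap \omega_1$ into $N$ do the real work. Everything else is routine once these order relations and the membership/foundation facts are established.
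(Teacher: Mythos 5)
Your proof is correct and takes essentially the same route as the paper: the paper's own proof declares adequacy and clauses (1)--(2) obvious and verifies clause (3) by exactly your argument, namely that $K \in M_i \cap A$ implies $K \in M_j$ (since $A \in M_j$ and $A$ is finite give $A \subseteq M_j$), whence $\sigma_{M_i,M_j}(K) = K \in A$ by strong isomorphism. Your extra work establishing $P < M_i$ for all $P \in A$, ruling out cross-isomorphic pairs, and checking the membership/foundation facts simply fills in the details the paper leaves implicit.
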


\begin{proof}
It is obvious that $A \cup \{ M_0, \ldots, M_k \}$ is adequate and satisfies 
requirements (1) and (2) in the definition of coherent. 
Requirement (3) follows from the fact that for any $i, j \le k$, if 
$K \in M_i \cap A$, then $K \in M_j$, and therefore 
$\sigma_{M_i,M_j}(K) = K \in A$.
\end{proof}

\begin{lemma}
Let $M$ and $N$ be in $\mathcal X$ and assume that $M$ and $N$ are 
strongly isomorphic. 
Then $M \cap P_{\omega_1}(M \cap N) \subseteq N$. 
In particular, if $\{ M, N \}$ is adequate and $M$ and $N$ are 
strongly isomorphic, then $M \cap P_{\omega_1}(\beta_{M,N}) \subseteq N$.
\end{lemma}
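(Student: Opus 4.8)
The plan is to exploit two facts recorded before the statement: first, that for a countable element $a \in M$ one has $\sigma_{M,N}(a) = \sigma_{M,N}[a]$; and second, that strong isomorphism means $\sigma_{M,N}$ fixes every point of $M \cap N$. Together these force $\sigma_{M,N}$ to fix any countable subset of $M \cap N$ that lies in $M$, and since $\sigma_{M,N}$ maps $M$ onto $N$, such a set must already belong to $N$.

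For the first assertion, I would fix $a \in M \cap P_{\omega_1}(M \cap N)$, so that $a$ is a countable set belonging to $M$ with $a \subseteq M \cap N$. Since $a$ is countable and $a \in M$, the collapse formula gives $\sigma_{M,N}(a) = \sigma_{M,N}[a] = \{ \sigma_{M,N}(x) : x \in a \}$. Now each $x \in a$ lies in $M \cap N$, and because $M$ and $N$ are strongly isomorphic, $\sigma_{M,N}(x) = x$. Hence $\sigma_{M,N}[a] = a$, so $a = \sigma_{M,N}(a)$. As $\sigma_{M,N}$ maps $M$ onto $N$, its value $\sigma_{M,N}(a)$ is a member of $N$; therefore $a \in N$, as desired.

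For the \emph{in particular} clause, the extra hypothesis that $\{ M, N \}$ is adequate lets me replace the intersection $M \cap N$ below $\beta_{M,N}$ by an initial-segment equality. Indeed, since $M$ and $N$ are isomorphic, $\sigma_{M,N}(\omega_1) = \omega_1$ and hence $M \cap \omega_1 = N \cap \omega_1$; combined with adequacy this yields $M \sim N$, that is, $M \cap \beta_{M,N} = N \cap \beta_{M,N}$. Consequently, any countable $a \in M$ with $a \subseteq \beta_{M,N}$ satisfies $a \subseteq M \cap \beta_{M,N} = N \cap \beta_{M,N} \subseteq M \cap N$, so $a \in M \cap P_{\omega_1}(M \cap N)$ and the first part applies to give $a \in N$.

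There is no serious obstacle in this argument; the only step that requires a moment of care is the deduction of $M \sim N$ from adequacy together with strong isomorphism in the second part, which rests on the definability of $\omega_1$ in $H(\lambda)$ and on the equivalence, for adequate pairs, of $M \sim N$ with $M \cap \omega_1 = N \cap \omega_1$ noted above.
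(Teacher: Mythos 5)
Your proposal is correct and follows essentially the same argument as the paper: the first part uses that $\sigma_{M,N}(a) = \sigma_{M,N}[a] = a$ for countable $a \subseteq M \cap N$, and the second part reduces to the first via $M \sim N$, which gives $M \cap \beta_{M,N} = N \cap \beta_{M,N}$. The only difference is that you spell out the deduction of $M \sim N$ from adequacy plus strong isomorphism (via $M \cap \omega_1 = N \cap \omega_1$), a step the paper's proof leaves implicit since it was recorded just after Definition 1.15.
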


\begin{proof}
Let $a \in M \cap P_{\omega_1}(M \cap N)$ be given. 
Since $a$ is countable and $\sigma_{M,N}$ is the identity on $M \cap N$, 
$\sigma_{M,N}(a) = \sigma_{M,N}[a] = a$. 
So $a \in N$. 
If in addition we know that $\{ M , N \}$ is adequate, then 
$M \sim N$. 
So if $a \in M \cap P_{\omega_1}(\beta_{M,N})$, then $a$ is a countable 
subset of $M \cap \beta_{M,N} = N \cap \beta_{M,N}$. 
Hence $a \in M \cap P_{\omega_1}(M \cap N)$ and therefore $a \in N$.
\end{proof}

\begin{lemma}
Let $A$ be a coherent adequate set. 
Suppose that $M$ and $N$ are in $A$ and $M \le N$. 
Then $M \cap P_{\omega_1}(\beta_{M,N}) \subseteq N$.
\end{lemma}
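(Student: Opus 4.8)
The plan is to split on the two possibilities that make up $M \le N$. If $M \sim N$, then since $A$ is coherent adequate, requirement~(1) of Definition~1.15 guarantees that $M$ and $N$ are strongly isomorphic, and the conclusion is precisely the ``in particular'' clause of Lemma~1.18. All of the real content therefore lies in the case $M < N$.

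Assume $M < N$. First I would invoke requirement~(2) of coherence to fix $N' \in A$ isomorphic to $N$ with $M \in N'$. Since $N'$ and $N$ both lie in the coherent adequate set $A$ and are isomorphic, the equivalences recorded just after Definition~1.15 show that they are in fact strongly isomorphic; writing $\sigma := \sigma_{N',N}$, the map $\sigma$ is the identity on $N' \cap N$. The goal is then to show that an arbitrary $a \in M \cap P_{\omega_1}(\beta_{M,N})$ belongs to $N$.

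The key step is to locate $a$ inside $N' \cap N$. Because $a \in M$ is countable and $M$ is a countable elementary substructure of $H(\lambda)$, we have $a \subseteq M$, and by hypothesis $a \subseteq \beta_{M,N}$; hence $a \subseteq M \cap \beta_{M,N}$. On one hand, $M < N$ means $M \cap \beta_{M,N} \in N$, and since this set is countable and $N$ is elementary, $M \cap \beta_{M,N} \subseteq N$, so every element of $a$ lies in $N$. On the other hand, $M \in N'$ with $M$ countable gives $M \subseteq N'$, so $a \subseteq N'$ as well. Thus each element of $a$ lies in $N' \cap N$ and is fixed by $\sigma$.

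Finally, since $a \in N'$ is countable, $\sigma(a) = \sigma[a] = \{ \sigma(\gamma) : \gamma \in a \} = a$; as $\sigma$ maps $N'$ onto $N$, this gives $a = \sigma(a) \in N$, completing the case. The crux I expect lies in the third paragraph: recognizing that the copy $N'$ supplied by coherence places $a$ simultaneously inside $N'$ and inside $N$, so that the pointwise-fixing property of the strong isomorphism can be brought to bear. Everything else is routine bookkeeping about countable sets sitting inside countable elementary substructures.
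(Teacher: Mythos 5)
Your proof is correct and follows essentially the same route as the paper's: the case $M \sim N$ is dispatched by Lemma 1.18, and in the case $M < N$ both arguments use coherence to fix $N' \in A$ isomorphic to $N$ with $M \in N'$, note that $N'$ and $N$ are strongly isomorphic, and conclude because $\sigma_{N',N}$ fixes the countable set $a$ pointwise and hence fixes $a$ itself. The only minor difference is how $a$ gets placed inside $N' \cap N$: you derive $a \subseteq N$ directly from the definition of $M < N$ (that is, $M \cap \beta_{M,N} \in N$, hence $M \cap \beta_{M,N} \subseteq N$ by countability and elementarity), whereas the paper instead observes $\beta_{M,N} \le \beta_{N',N}$ and cites the ``in particular'' clause of Lemma 1.18 for the pair $(N',N)$ --- the two finishes are interchangeable and rest on the same facts.
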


\begin{proof}
If $M$ and $N$ are strongly isomorphic, then we are done 
by the preceding lemma. 
Assume that $M < N$. 
Since $A$ is coherent, fix $N' \in A$ isomorphic to $N$ such that $M \in N'$. 
Then $N$ and $N'$ are strongly isomorphic. 
Consider $a \in M \cap P_{\omega_1}(\beta_{M,N})$. 
Then $a \in N' \cap P_{\omega_1}(\beta_{M,N})$. 
Since $M \in N'$, $\beta_{M,N} \le \beta_{N,N'}$. 
So $a \in N' \cap P_{\omega_1}(\beta_{N',N})$. 
As $N$ and $N'$ are strongly isomorphic, $a \in N$ by the preceding lemma.
\end{proof}

\section{Preserving CH}

Fix for the remainder of this section a set $S \subseteq \omega_2$ 
such that $S \cap \cof(\omega_1)$ is stationary and 
a set $\mathcal Y \subseteq \mathcal X$ which is stationary 
in $P_{\omega_1}(H(\lambda))$. 
Also assume that $\mathcal Y$ is closed under isomorphisms, which means 
that whenever $M$ and $N$ are in $\mathcal Y$ and are isomorphic, 
and $K \in M \cap \mathcal Y$, then $\sigma_{M,N}(K) \in \mathcal Y$. 
Note that by Lemma 1.10, the set $\mathcal X$ itself is closed under 
isomorphisms.

A forcing poset $\p$ is said to be an \emph{$(S,\mathcal Y)$ 
coherent adequate type forcing} if there exists a natural number $m$ 
such that $\p$ consists of conditions of the form 
$(x_0,\ldots,x_m,A)$ satisfying:
\begin{enumerate}
\item[(I)] $x_0, \ldots, x_m$ are finite subsets of $H(\lambda)$;
\item[(II)] $A$ is a finite $(S)$ coherent adequate subset of $\mathcal Y$;
\item[(III)] if $(y_0,\ldots,y_m,B) \le (x_0,\ldots,x_m,A)$, 
$N$ and $N'$ are isomorphic sets in $B$, and 
$(x_0,\ldots,x_m,A) \in N$, then $\sigma_{N,N'}((x_0,\ldots,x_m,A))$ is in $\p$, and 
$(y_0,\ldots,y_m,B) \le \sigma_{N,N'}((x_0,\ldots,x_m,A))$;
\item[(IV)] if $M_0, \ldots, M_n$ are isomorphic sets in $\mathcal Y$ such that 
$\{ M_0, \ldots, M_n \}$ is $(S)$ coherent adequate and 
$(x_0,\ldots,x_m,A) \in M_0 \cap \cdots \cap M_n$, then there is a condition 
$(y_0,\ldots,y_m,B) \le (x_0,\ldots,x_m,A)$ such that 
$M_0, \ldots, M_n \in B$;
\item[(V)] for all $M \in A$, $(x_0,\ldots,x_m,A)$ is strongly $M$-generic.
\end{enumerate}
Regarding (V), recall that a condition $q$ is strongly $M$-generic if for any set $D$ which is 
dense in the forcing poset $M \cap \p$, $D$ is predense in $\p$ below $q$. 

We say that $\p$ is a \emph{coherent adequate type forcing} if 
$\p$ is an $(\omega_2,\mathcal X)$ coherent type forcing. 
Define $(S)$ coherent adequate and $(\mathcal Y)$ coherent adequate type 
forcings similarly. 
We interpret the above definition to include the possibility that the sequence $x_0,\ldots,x_m$ in 
a condition has length $0$, in which case conditions are just 
$(S,\mathcal Y)$ coherent adequate sets.

\begin{lemma}
Let $\p$ be an $(S,\mathcal Y)$ coherent adequate forcing. 
Then $\p$ preserves $\omega_1$.
\end{lemma}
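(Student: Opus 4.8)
The plan is to argue by contradiction, showing that no condition can force a function from $\omega$ onto $\omega_1$; the idea is to trap the range of any such putative surjection inside the countable ordinal $M \cap \omega_1$ for a suitable side-condition model $M$, exploiting the strong genericity guaranteed by clause (V). First I would record a structural fact: every condition $(x_0,\ldots,x_m,A)$ lies in $H(\lambda)$, since each $x_i$ is a finite subset of $H(\lambda)$ and each member of $A$ is a countable subset of $H(\lambda)$, hence an element of $H(\lambda)$ because $\cf(\lambda) > \omega$. Thus $\p \subseteq H(\lambda)$, so for any $M \in \mathcal X$ the poset $M \cap \p$ is meaningful. Now suppose toward a contradiction that some $p \in \p$ and some $\p$-name $\dot f$ satisfy $p \Vdash$ ``$\dot f \colon \omega \to \omega_1$ is surjective''.

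Next I would choose the model. Fixing a sufficiently large regular $\theta$, I would take a countable $N \prec (H(\theta),\in,Y)$ with $\p, \dot f, p, S, \mathcal Y \in N$ and with $M := N \cap H(\lambda) \in \mathcal Y$; such an $N$ exists because the collection of all $N \cap H(\lambda)$ arising this way is club in $P_{\omega_1}(H(\lambda))$ and $\mathcal Y$ is stationary. Since $\p \subseteq H(\lambda)$ and $\omega_1 \subseteq H(\lambda)$, I obtain $M \cap \p = N \cap \p$ and $M \cap \omega_1 = N \cap \omega_1 =: \delta$, a countable ordinal, and moreover $p \in M$.

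To produce a strongly generic extension I would first verify that the singleton $\{ M \}$ is $(S)$ coherent adequate: it is trivially coherent adequate, and $R_{\{M\}} = R_M(M) = \emptyset$, since no ordinal of $M \cap \omega_2$ lies at or above $\beta_{M,M} = \max(\Lambda_M) \ge \sup(M \cap \omega_2)$. Applying clause (IV) with the single model $M$, using $p \in M$, yields a condition $q \le p$ with $M \in B$, where $q = (y_0,\ldots,y_m,B)$; by clause (V), $q$ is then strongly $M$-generic. Finally I would bound the range: for each $n < \omega$ put $D_n := \{ r \in \p : r \text{ decides } \dot f(n) \}$. Since $\dot f \in N$, elementarity gives that $D_n \cap M = D_n \cap N$ is dense in $M \cap \p$, and every $r \in D_n \cap M$ forces $\dot f(n)$ to equal some ordinal in $N \cap \omega_1 = \delta$. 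By strong $M$-genericity, $D_n \cap M$ is predense below $q$, so a routine incompatibility argument gives $q \Vdash \dot f(n) \in \delta$. Hence $q \Vdash \ran(\dot f) \subseteq \delta \subsetneq \omega_1$, contradicting $q \le p \Vdash$ ``$\dot f$ is onto $\omega_1$''.

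The step I expect to be the main obstacle is the interface between the outer model $N \prec (H(\theta),\in,Y)$, which is what actually sees the name $\dot f$, and the side-condition model $M = N \cap H(\lambda) \in \mathcal Y$, for which strong genericity is formulated in clause (V): the whole argument turns on the identity $M \cap \p = N \cap \p$, which converts the dense sets manufactured by the elementarity of $N$ into genuine dense subsets of $M \cap \p$, together with confirming that $\{ M \}$ meets the hypotheses of clause (IV). Everything else is bookkeeping around the standard fact that a strongly $M$-generic condition forces names for ordinals lying in $M$ to be realized inside $M$.
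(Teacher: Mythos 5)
Your proof is correct and follows essentially the same route as the paper: find a countable $N^* \prec H(\chi)$ with $N^* \cap H(\lambda) \in \mathcal Y$ using the stationarity of $\mathcal Y$, use clause (IV) to place that model in a condition $q \le p$, and use the strong genericity from clause (V) together with the density of deciding conditions in $M \cap \p$ to trap all values of $\dot f$ inside $M \cap \omega_1$. The only differences are cosmetic: you phrase it as a contradiction with a surjection rather than directly bounding the range, and you spell out details the paper leaves implicit (that $\{M\}$ is $(S)$ coherent adequate with $R_{\{M\}} = \emptyset$, and that $M \cap \p = N^* \cap \p$), both of which you verify correctly.
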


\begin{proof}
Let $\dot f$ be a $\p$-name for a function from $\omega$ to $\omega_1$ and let $p$ be a condition. 
We will find $q \le p$ which forces that the range of $\dot f$ is bounded. 
Fix $\chi \geq \lambda$ regular such that $\dot f \in H(\chi)$. 
Since $\mathcal Y$ is stationary, we can find $N^*$ a countable elementary substructure of $H(\chi)$ 
with $\p$, $\dot f$, and $p$ in $N^*$ and $N := N^* \cap H(\lambda)$ in $\mathcal Y$.

By property (IV), there is $q = (y_0,\ldots,y_m,B) \le p$ such that $N \in B$. 
By property (V), $q$ is strongly $N$-generic. 
It is easy to see that for all $n < \omega$, the set of $s \in N \cap \p$ which decide the 
value of $\dot f(n)$ is dense in $N \cap \p$. 
It follows that for all $n < \omega$, if $r \le q$ decides the value of $\dot f(n)$, then 
that value is a member of $N$. 
So $q$ forces that $\dot f$ is bounded.
\end{proof}

We will prove that any $(S,\mathcal Y)$ coherent adequate type forcing 
preserves $\omega_2$ and CH. 
The proof will use the following technical result. 

\begin{lemma}
(CH) Suppose that $R_0, \ldots, R_k$ are subsets of $H(\lambda)$. 
Then for any set $z \in H(\lambda)$, there are $M$ and $N$ in $\mathcal Y$ satisfying 
the following:
\begin{enumerate}
\item $z \in M \cap N$;
\item $\{ M, N \}$ is $(S)$ coherent adequate; 
\item the structures $(M,\in,Y \cap M,R_0 \cap M,\ldots,R_k \cap M)$ 
and $(N,\in,Y \cap N,R_0 \cap N,\ldots,R_k \cap N)$ are 
elementary in $(H(\lambda),\in,Y,R_0,\ldots,R_k)$ and are isomorphic;
\item $M \cap \omega_2 \subseteq \min(N \setminus \beta_{M,N})$ and 
$\sigma_{M,N}(\min(M \setminus \beta_{M,N})) = 
\min(N \setminus \beta_{M,N})$.
\end{enumerate}
\end{lemma}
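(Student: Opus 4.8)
The plan is to use CH to manufacture $\omega_2$-many models in $\mathcal Y$ realizing a single isomorphism type, and then to extract a pair with the prescribed overlap by pressing down twice. First I would expand the ambient structure: set $\mathfrak A := (H(\lambda),\in,Y,R_0,\ldots,R_k,S)$ and note that the collection of countable $M \prec \mathfrak A$ with $z \in M$ is club in $P_{\omega_1}(H(\lambda))$; intersecting it with the stationary set $\mathcal Y$ gives a stationary $\mathcal Y' \subseteq \mathcal Y$ consisting of models elementary in $\mathfrak A$ (hence, forgetting the predicate $S$, satisfying the elementarity demanded in (3)) and containing $z$. Since the transitive collapse depends only on $\in$, the canonical map $\sigma_{M,N}$ between any two isomorphic $M,N \in \mathcal Y'$ preserves $S$; this is the device that will push a remainder point into $S$.

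Next, for each $\gamma \in S \cap \cof(\omega_1)$ I would fix, using the stationarity of $\mathcal Y'$, a model $M_\gamma \in \mathcal Y'$ with $z,\gamma \in M_\gamma$, producing $\omega_2$-many models. Under CH there are at most $2^\omega = \omega_1$ isomorphism types of structures $(M,\in,Y\cap M,R_0\cap M,\ldots,R_k\cap M,S\cap M)$ for countable $M$, since each is coded by a real via its transitive collapse. As the nonstationary ideal on $\omega_2$ is $\omega_2$-complete, splitting the stationary set $S\cap\cof(\omega_1)$ according to the type of $M_\gamma$ yields a stationary $T$ on which all $M_\gamma$ share one type; in particular they are pairwise isomorphic, and hence have a common value $M_\gamma \cap \omega_1$, so that once adequacy is established they will be $\sim$-comparable.

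I would then press down twice on $T$. The map $\gamma \mapsto \sup(M_\gamma \cap \gamma)$ is regressive on $T \subseteq \cof(\omega_1)$ (as $M_\gamma$ is countable and $\cf(\gamma)=\omega_1$), so by Fodor it is constant, with value $\sigma^*$ say, on a stationary $T_1 \subseteq T$. For $\gamma \in T_1$ the set $M_\gamma \cap \gamma$ is a countable subset of the fixed ordinal $\sigma^*$, and CH leaves only $\omega_1$ such subsets, so a further thinning gives a stationary $T' \subseteq T_1$ on which $M_\gamma \cap \gamma$ equals a fixed set $a$. Finally I would choose $\gamma_0 < \gamma_1$ in $T'$ with $\gamma_1 > \sup(M_{\gamma_0}\cap\omega_2)$ and set $M := M_{\gamma_0}$ and $N := M_{\gamma_1}$.

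It remains to verify (1)--(4). Clearly $z \in M \cap N$. Since $\sigma^* = \sup(a) < \gamma_0$ has countable cofinality, $\sup(a)\notin\Lambda$, and I expect to show $\beta_{M,N} = \beta := \min(\Lambda \setminus \sup(a))$ with $M \cap \beta = a = N\cap\beta$, whence $M \sim N$; being isomorphic and adequate, $M$ and $N$ are strongly isomorphic (the isomorphism fixes the common ordinals $a$, and fixes the remaining common hereditarily countable elements by uniqueness of the collapse). The crux is the computation that $N$ has no points in $[\beta,\gamma_1)$, since $N\cap\gamma_1 = a \subseteq \beta$; this gives $\gamma_1 = \min(N\setminus\beta)$ and $M\cap\omega_2 \subseteq \gamma_1$, so $R_M(N) = \{\gamma_1\}$ and $R_N(M) = \{\mu\}$ with $\mu := \min(M\setminus\beta)$, and $\sigma_{M,N}(\mu) = \gamma_1$, yielding (4). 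As $\gamma_1 \in S$ and $\sigma_{M,N}$ preserves $S$, also $\mu = \sigma_{M,N}^{-1}(\gamma_1) \in S$, so $R_{\{M,N\}} = \{\mu,\gamma_1\} \subseteq S$ and $\{M,N\}$ is $(S)$ coherent adequate, giving (2). The main obstacle is the bookkeeping at the comparison point: confirming that the pressed-down agreement on $a$ forces $\beta_{M,N}$ to equal $\min(\Lambda\setminus\sup(a))$ and that $N$ omits $[\beta,\gamma_1)$, for this is exactly what pins the unique remainder point at $\gamma_1 \in S$ and produces the one-sided overlap of (4); the only other delicate point is obtaining $\omega_2$-many models inside the merely stationary set $\mathcal Y$, which is why the CH type-count (furnishing more than $\omega_1$ models of a single type) is indispensable.
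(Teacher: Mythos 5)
Your architecture is genuinely different from the paper's: you index models by $\gamma \in S \cap \cof(\omega_1)$, stabilize the isomorphism type by a CH count, and press down twice, whereas the paper performs a single reflection inside an $\omega_1$-sized, $\omega$-closed $N^* \prec H(2^\lambda)$ with $N^* \cap \omega_2 \in S$ and pulls $M$ out of $N^*$ by elementarity. Most of your argument is sound: $a = M_\gamma \cap \gamma$ has no maximum (as $M_\gamma$ is closed under ordinal successor and $\gamma$ is a limit), so $\sup(a) \notin \Lambda$; the computation $\beta_{M,N} = \min(\Lambda \setminus \sup(a))$, the relation $M \sim N$, the identification $R_M(N) = \{\gamma_1\}$, $R_N(M) = \{\mu\}$, and conclusion (4) all check out; and the device of adding $S$ to the language so that, by uniqueness of isomorphisms, $\sigma_{M,N}$ preserves $S$ and hence $\mu = \sigma_{M,N}^{-1}(\gamma_1) \in S$, is legitimate.

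The gap is your claim that $M$ and $N$ are strongly isomorphic. You argue $\sigma_{M,N}$ fixes the common ordinals in $a$ (true) and the common hereditarily countable elements (also true, by $\in$-induction, since countable members of $M$ are subsets of $M$). But $M \cap N$ need not be contained in $a \cup H(\omega_1)$: it can contain, for example, unbounded subsets of $\omega_2$, or, when $\lambda > \omega_2$, ordinals in $[\omega_2,\lambda)$. For such a common element $x$ nothing forces $\sigma_M(x) = \sigma_N(x)$ --- that is, $x$ may sit at different ``addresses'' in the two transitive collapses --- in which case $\sigma_{M,N}(x) \ne x$. Concretely, if $X \subseteq \omega_2$ is unbounded and lies in both models, then $X \cap M$ and $X \cap N$ have points above $\beta_{M,N}$, and $\sigma_M(X)$, $\sigma_N(X)$ are computed from the traces of $X$ on $M \setminus \beta_{M,N}$ and $N \setminus \beta_{M,N}$, which your construction leaves completely uncontrolled: fixing the type and fixing $M_\gamma \cap \gamma$ constrains nothing about the pairwise intersections $M_{\gamma_0} \cap M_{\gamma_1}$ beyond $a$. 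Since Definition 1.15(1) requires $\sim$-comparable models in a coherent adequate set to be strongly isomorphic, conclusion (2) of the lemma is not established. This is exactly where the paper spends its effort: in Lemma 2.2 the entire intersection $M \cap N$ is trapped inside $N \cap N^*$ and clause (h) (via the relation $T$) forces the two collapsing maps to agree there, while the parallel Lemma 3.3 handles the same issue with a $\Delta$-system plus stabilization of $\sigma_{N_i}$ on the root. Your proof admits the same repair: under CH one has $\omega_1^\omega = \omega_1 < \omega_2$, so you can thin $T'$ further so that the $M_\gamma$ form a $\Delta$-system with root $r$ and so that $\sigma_{M_\gamma} \restriction r$ is constant; any two surviving models are then strongly isomorphic and the rest of your argument goes through verbatim. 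Without some such step, the proof is incomplete.
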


\begin{proof}
Since $S \cap \cof(\omega_1)$ is stationary and CH holds, we can fix $N^*$ satisfying:
\begin{enumerate}
\item $N^* \prec H(2^\lambda)$;
\item $|N^*| = \omega_1$;
\item $Y$, $z$, $S$, $\mathcal Y$, $R_0, \ldots, R_k$ are in $N^*$;
\item $\beta^* := N^* \cap \omega_2 \in S$;
\item $(N^*)^\omega \subseteq N^*$;
\end{enumerate}

As $\mathcal Y$ is stationary, we can fix $N$ in $\mathcal Y$ 
such that $z, \beta^* \in N$ and 
$$
(N,\in,Y \cap N,R_0 \cap N,\ldots,R_k \cap N) \prec (H(\lambda),\in,Y,R_0,\ldots,R_k).
$$

Let $\mathfrak N$ denote the structure 
$(N,\in,Y \cap N,R_0 \cap N,\ldots,R_k \cap N)$, 
let $\overline{\mathfrak N}$ denote its transitive collapse, 
and as usual let $\sigma_N$ denote the transitive collapsing map. 
Let $T$ be the relation defined by letting $T(a,b)$ hold if 
$a \in N \cap N^*$ and $\sigma_N(a) = b$. 
By CH, $H(\omega_1)$ has size $\omega_1$. 
Since $(N^*)^\omega \subseteq N^*$, $H(\omega_1)$ is a subset of $N^*$. 
It follows that $\overline{\mathfrak N}$ is a member and a subset of $N^*$. 
Also $N^*$ contains the sets 
$N \cap N^*$ and $N \cap \beta^*$ as members. 
It follows that the relation $T$ is a subset of $N^*$, and since it is 
countable it is a member of $N^*$.

The objects $N$ and $\beta^*$ witness the statement, satisfied by 
the structure $H(2^\lambda)$, that there 
exist $M$ and $\beta$ satisfying:
\begin{enumerate}
\item[(a)] $M$ is in $\mathcal Y$;
\item[(b)] $\beta \in S \cap \cof(\omega_1)$;
\item[(c)] $z$ and $\beta$ are in $M$;
\item[(d)] $N \cap \beta^* = M \cap \beta$;
\item[(e)] $N \cap N^* \subseteq M$;
\item[(f)] $(M,\in,Y,R_0 \cap M,\ldots,R_k \cap M) \prec 
(H(\lambda),\in,Y,R_0,\ldots,R_k)$;
\item[(g)] the transitive collapse of the structure 
$(M,\in,Y \cap M,R_0 \cap M,\ldots,R_k \cap M)$ is equal to 
$\overline{\mathfrak{N}}$;
\item[(h)] $T(a,b)$ implies that $\sigma_{M}(a) = b$.
\end{enumerate}
Since the parameters which appear in the above statement are all members 
of $N^*$, by elementarity we can fix $M$ and $\beta$ in $N^*$ satisfying the 
same statement.

Let us prove that $M$ and $N$ are as desired. 
We know that $M$ and $N$ are in $\mathcal Y$ and $z \in M \cap N$. 
Also the structures 
$(M,\in,Y \cap M,R_0 \cap M,\ldots,R_k \cap M)$ and 
$(N,\in,Y \cap N,R_0 \cap N,\ldots,R_k \cap N)$ are elementary in 
$(H(\lambda),\in,Y,R_0,\ldots,R_k)$. 
And since they have the same transitive collapse, they are isomorphic. 

We claim that $M$ and $N$ are strongly isomorphic. 
So let $a \in M \cap N$ be given, and we show that $\sigma_{N,M}(a) = a$. 
Since $M \in N^*$, it follows that $a \in N \cap N^*$. 
Let $b := \sigma_N(a)$. 
Then $T(a,b)$ holds. 
By (h), $\sigma_M(a) = b$. 
It follows that $\sigma_{N,M}(a) = \sigma_M^{-1}(\sigma_N(a)) = 
\sigma_M^{-1}(b) = a$.

We claim that $\beta_{M,N} \le \beta$. 
Since $M \in N^*$, $\Lambda_M \subseteq N^*$. 
But $\beta_{M,N} \in \Lambda_M$, so $\beta_{M,N} \in N^* \cap \omega_2 = 
\beta^*$. 
This shows that $\beta_{M,N} \le \beta^*$. 
Let $\xi := \sup(N \cap \beta_{M,N})$. 
As $\beta_{M,N} \le \beta^*$, $\xi$ is a limit point of 
$N \cap \beta^*$. 
Since $\beta_{M,N} \in \Lambda_N$, 
$\beta_{M,N} = \min(\Lambda \setminus \xi)$. 
Now $N \cap \beta^* = M \cap \beta$ implies that 
$N \cap \beta^* \subseteq \beta$. 
Since $\xi$ is a limit point of $N \cap \beta^*$, $\xi < \beta$. 
As $\beta_{M,N} = \min(\Lambda \setminus \xi)$ and 
$\beta \in \Lambda$, $\beta_{M,N} \le \beta$.

Now $M \cap \beta = N \cap \beta^*$ and $\beta_{M,N} \le \beta$ imply that 
\begin{enumerate}
\item[(i)] $M \sim N$;
\item[(ii)] $\min(M \setminus \beta_{M,N}) \ge \beta$;
\item[(iii)] $\min(N \setminus \beta_{M,N}) \ge \beta^*$. 
\end{enumerate}
Statement (i) is immediate. 
For (ii), if $\tau \in M \setminus \beta_{M,N}$, then $\tau \in M \setminus N$. 
As $M \cap \beta \subseteq N$, $\tau \ge \beta$. 
The proof of (iii) is similar.

By (i), $\{ M, N \}$ is adequate. 
And since $M$ and $N$ are strongly isomorphic, $\{ M, N \}$ is 
coherent adequate. 
As $\beta \in M$ and $\beta^* \in N$, 
(ii) and (iii) imply that $\min(M \setminus \beta_{M,N}) = \beta$ and 
$\min(N \setminus \beta_{M,N}) = \beta^*$. 
Also $M \in N^*$ implies that $M \cap \omega_2 \subseteq \beta^*$. 
This easily implies that $R_M(N) = \{ \beta^* \}$ and $R_N(M) = \{ \beta \}$. 
As $\beta$ and $\beta^*$ are in $S$, $\{ M, N \}$ is $(S)$ coherent adequate.

Since $\beta$ and $\beta^*$ are the first elements of $M$ and $N$ above 
their common intersection, clearly $\sigma_{M,N}(\beta) = \beta^*$. 
Hence $\sigma_{M,N}(\min(M \setminus \beta_{M,N})) = 
\min(N \setminus \beta_{M,N})$. 
Finally, $M \cap \omega_2 \subseteq \beta^* = 
\min(N \setminus \beta_{M,N})$.
\end{proof}

\begin{proposition}
(CH) Let $\p$ be an $(S,\mathcal Y)$ coherent adequate type forcing. 
If $p$ is a condition which forces that 
$\langle \dot f_i : i < \omega_2^V \rangle$ is a sequence of functions 
from $\omega$ to $\omega$, 
then there is $q \le p$ and $i < j$ such that 
$q$ forces that $\dot f_i = \dot f_j$.
\end{proposition}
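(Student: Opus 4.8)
The plan is to apply the reflection lemma (Lemma 2.3) to produce two strongly isomorphic models $M$ and $N$ whose isomorphism transports a name $\dot f_i$ with index inside $M$ onto a name $\dot f_j$ with index inside $N$, and then to read off $q \Vdash \dot f_i = \dot f_j$ from the genericity and transfer properties (III)--(V). The first step is to package the forcing data as predicates on $H(\lambda)$. Since the names $\dot f_\alpha$ need not lie in $H(\lambda)$, I will not use them directly; instead I let $R \subseteq H(\lambda)$ be a code (via a fixed pairing) of the relation
$$
R = \{ (s,\alpha,n,k) : s \in \p,\ \alpha < \omega_2,\ n,k < \omega,\ \text{and}\ s \Vdash \dot f_\alpha(n) = k \},
$$
together with predicates recording $\p$, $S$, $\mathcal Y$. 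All tuples here lie in $H(\lambda)$ since $\p \subseteq H(\lambda)$ and $\omega_2,\omega \subseteq H(\lambda)$, so $R$ is a legitimate parameter for Lemma 2.3.

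Next I would apply Lemma 2.3 with $z := p$ and the predicates above, obtaining $M, N \in \mathcal Y$ that are strongly isomorphic, with $\{M,N\}$ being $(S)$ coherent adequate, $p \in M \cap N$, the expanded structures $(M,\in,Y\cap M,R\cap M,\dots)$ and $(N,\in,Y\cap N,R\cap N,\dots)$ elementary in $(H(\lambda),\in,Y,R,\dots)$ and isomorphic, and $M \cap \omega_2 \subseteq \min(N\setminus\beta_{M,N})$ with $\sigma_{M,N}(\min(M\setminus\beta_{M,N})) = \min(N\setminus\beta_{M,N})$. Because the two models share the transitive collapse of their expanded structures, the map $\sigma := \sigma_{M,N}$ preserves $R$. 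I then set $i := \min(M\setminus\beta_{M,N})$ and $j := \min(N\setminus\beta_{M,N})$; by Lemma 2.3 these lie in $S \cap \cof(\omega_1)$, hence below $\omega_2$, they satisfy $j = \sigma(i)$, and since $i \in M\cap\omega_2 \subseteq \min(N\setminus\beta_{M,N}) = j$ we have $i < j$. Finally, by property (IV) applied to the isomorphic pair $M, N$ in $\mathcal Y$ with $p \in M \cap N$, I fix $q = (y_0,\dots,y_m,B) \le p$ with $M, N \in B$; by property (V), $q$ is strongly $M$-generic and strongly $N$-generic.

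It then remains to show $q \Vdash \dot f_i = \dot f_j$, for which it suffices to prove that for each $n$ the set of $r' \le q$ forcing $\dot f_i(n) = \dot f_j(n)$ is dense below $q$. Fix $n$ and $r \le q$. The set $D = \{ s \in M \cap \p : s \text{ decides } \dot f_i(n) \}$ is dense in $M \cap \p$ by the elementarity of $M$ (which contains $p$, $i$, $n$ and reflects $R$), so by strong $M$-genericity it is predense below $q$; hence some $s \in D$ is compatible with $r$, say $r_1 \le r, s$, and $s$ forces $\dot f_i(n) = k$ for some $k$, giving $R(s,i,n,k)$ and $r_1 \Vdash \dot f_i(n) = k$. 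Now $s \in M \cap \p$ with $r_1 \le s$, and $M$ and $N$ are isomorphic members of the side condition of $r_1$ (which extends $B$); so property (III) yields $\sigma(s) \in \p$ and $r_1 \le \sigma(s)$. Since $\sigma$ preserves $R$ and $\sigma(i)=j$, $\sigma(n)=n$, $\sigma(k)=k$, we obtain $R(\sigma(s),j,n,k)$, i.e.\ $\sigma(s) \Vdash \dot f_j(n) = k$, whence $r_1 \Vdash \dot f_j(n) = k = \dot f_i(n)$. Thus $r_1 \le r$ forces $\dot f_i(n) = \dot f_j(n)$, completing the density argument and hence the proof.

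I expect the principal obstacle to be the first step: justifying that the full forcing relation for the $\dot f_\alpha$ can be packaged as a genuine predicate on $H(\lambda)$ that both $M$ and $N$ reflect correctly and that $\sigma$ preserves, given that the names themselves may be too large to belong to $H(\lambda)$. Once $R$ is set up and $\{M,N\}$ is produced by Lemma 2.3, the amalgamation one would normally build by hand is delivered for free by property (III) --- that $r_1 \le s$ with $s \in M$ forces $r_1 \le \sigma(s)$ --- so no separate amalgamation lemma is needed here. A secondary point to verify carefully is that extensions of $q$ retain $M$ and $N$ in their side conditions (monotonicity of the model part under $\le$) and that $i,j < \omega_2^V$ with $i < j$.
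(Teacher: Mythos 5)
Your proposal is correct and follows essentially the same route as the paper's own proof: both encode the forcing relation for the $\dot f_\alpha$'s as a predicate $R$ on $H(\lambda)$ (the paper's Lemma 2.2, which you cite as the reflection lemma), take $i$ and $j$ to be the minima of $M$ and $N$ above $\beta_{M,N}$, obtain $q \le p$ with $M,N$ in its model part via (IV), and combine strong $M$-genericity (V) with the transfer property (III) to move a condition in $M$ deciding $\dot f_i(n)$ to its $\sigma_{M,N}$-image deciding $\dot f_j(n)$. The only difference is presentational --- you run a direct density argument below $q$ where the paper argues by contradiction --- and the two caveats you flag (coding $R$ rather than the names themselves, and that extensions of $q$ keep $M,N$ in their model parts) are handled, respectively implicitly assumed, in exactly the same way by the paper.
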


\begin{proof}
Define a relation $R$ on $H(\lambda)$ 
by letting $R(z, i, n, m)$ if $z \in \p$ and 
$z \Vdash_\p \dot f_i(n) = m$. 
By Lemma 2.2, we can fix $M$ and $N$ in $\mathcal Y$ satisfying:
\begin{enumerate}
\item $p \in M \cap N$;
\item $\{ M, N \}$ is $(S)$ coherent adequate; 
\item the structures $(M,\in,Y \cap M,R \cap M)$ and 
$(N,\in,Y \cap N,R \cap N)$ are 
elementary in $(H(\lambda),\in,Y,R)$ and are isomorphic;
\item $M \cap \omega_2 \subseteq \min(N \setminus \beta_{M,N})$ and 
$\sigma_{M,N}(\min(M \setminus \beta_{M,N})) = 
\min(N \setminus \beta_{M,N})$.
\end{enumerate}

Let $\sigma := \sigma_{M,N}$. 
By the uniqueness of isomorphisms, $\sigma$ is an isomorphism between 
the structures described in (3) above. 
Let $i := \min(M \setminus \beta_{M,N})$ and 
$j := \min(N \setminus \beta_{M,N})$. 
Then $i < j$ and $\sigma(i) = j$.

We claim that for all $z \in M \cap \p$ and integers $n$ and $m$, if 
$z \Vdash_\p \dot f_i(n) = m$ then $\sigma(z) \Vdash_\p \dot f_j(n) = m$. 
For assume that $z \Vdash_\p \dot f_i(n) = m$. 
Then $R(z,i,n,m)$ holds. 
Since $z$, $i$, $n$, and $m$ are in $M$, $\sigma(i) = j$, 
and $\sigma$ is an isomorphism, 
$R(\sigma(z),j,n,m)$ holds. 
By definition of $R$, this means that $\sigma(z) \Vdash_\p \dot f_j(n) = m$.

Since $M$ and $N$ are isomorphic sets in $\mathcal Y$, 
$\{ M, N \}$ is $(S)$ coherent adequate, and 
$p \in M \cap N$, 
by property (IV) in the definition of an $(S,\mathcal Y)$ 
coherent adequate type forcing, 
there is a condition $q = (y,B)$ below $p$ such that $M, N \in B$. 
By property (V), $q$ is strongly $M$-generic.

We claim that $q$ forces that $\dot f_i = \dot f_j$, 
which completes the proof. 
So assume for a contradiction that there exists $r \le q$ and 
$n < \omega$ such that $r \Vdash_\p \dot f_i(n) \ne \dot f_j(n)$.

Let $D$ be the set of conditions $w$ in $\p \cap M$ such that 
for some $m$, $w \Vdash_\p \dot f_i(n) = m$. 
We claim that $D$ is dense in $\p \cap M$. 
So let $v \in \p \cap M$ be given. 
Clearly there exists $w$ and $m$ such that $w \le v$ and 
$w \Vdash_\p \dot f_i(n) = m$, and hence $R(w,i,n,m)$ holds.  
Since $v$, $i$, and $n$ are in $M$ and 
$M$ is elementary in $(H(\lambda),\in,Y,R)$, there is $w \leq v$ 
in $\p \cap M$ and $m$ such that $R(w,i,n,m)$ holds. 
Then $w$ is an extension of $v$ in $D$.

As $q$ is strongly $M$-generic, $D$ is predense below $q$. 
In particular, there is $w \in D$ such that $w$ is compatible with $r$. 
Fix $s \le w, r$. 
So $s \le w$, $M$ and $N$ are isomorphic models in $A_s$, and $w \in M$. 
By Property (III) in the definition of an $(S,\mathcal Y)$ 
coherent adequate type forcing, 
$s \le \sigma(w)$. 
Since $w \Vdash_\p \dot f_i(n) = m$, 
$\sigma(w) \Vdash_\p \dot f_j(n) = m$ by the claim above. 
As $s$ extends both $w$ and $\sigma(w)$, 
$s$ forces that $\dot f_i(n) = \dot f_j(n)$. 
But this contradicts that $s \le r$, since 
$r$ forces that $\dot f_i(n) \ne \dot f_j(n)$.
\end{proof}

\begin{corollary}
(CH) Let $\p$ be an $(S,\mathcal Y)$ 
coherent adequate type forcing. 
Then $\p$ preserves $\omega_2$ and $CH$.
\end{corollary}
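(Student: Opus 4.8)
The plan is to derive both conclusions at once from Proposition 2.3 together with Lemma 2.1, the point being that Proposition 2.3 rules out adding $\omega_2^V$ many distinct reals. First I would argue that no condition forces an injection from $\omega_2^V$ into ${}^\omega\omega$. Suppose toward a contradiction that some condition $p$ forces that $\dot e$ is an injection from $\omega_2^V$ into ${}^\omega\omega$. For each $i < \omega_2^V$ let $\dot f_i$ name the real $\dot e(\check i)$. Then $p$ forces that $\langle \dot f_i : i < \omega_2^V \rangle$ is a sequence of functions from $\omega$ to $\omega$ which are pairwise distinct. By Proposition 2.3 there are $q \le p$ and $i < j$ with $q \Vdash \dot f_i = \dot f_j$, contradicting that $q \le p$ forces $\dot f_i \ne \dot f_j$. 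Hence no condition forces the existence of such an injection, so by the forcing theorem $1_\p \Vdash$ ``there is no injection from $\omega_2^V$ into ${}^\omega\omega$''. Since $|{}^\omega\omega| = 2^\omega$ in any model of choice, this says exactly that $V^\p \models 2^\omega < |\omega_2^V|$.

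Next I would pin down $\mu := |\omega_2^V|$ as computed in $V^\p$. Every cardinal of $V^\p$ is a cardinal of $V$ (a collapsing map in $V$ would survive into $V^\p$), and by Lemma 2.1 the cardinal $\omega_1$ is preserved, so $\mu \ge \omega_1$. The infinite $V$-cardinals in the interval $[\omega_1, \omega_2^V]$ are exactly $\omega_1$ and $\omega_2^V$, with nothing strictly between; hence the only possible values of $\mu$ are $\omega_1$ and $\omega_2^V$.

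Now I would use the inequality $2^\omega < \mu$ to decide the case. If $\mu = \omega_1$, then $V^\p$ would satisfy $2^\omega < \omega_1$, i.e.\ the continuum would be countable, which is absurd. Therefore $\mu = \omega_2^V$, which is precisely the statement that $\omega_2^V$ remains a cardinal in $V^\p$. Combined with the preservation of $\omega_1$ and the absence of $V$-cardinals strictly between $\omega_1$ and $\omega_2^V$, this yields $\omega_2^{V^\p} = \omega_2^V$, so $\omega_2$ is preserved. Finally, $2^\omega < \mu = \omega_2^{V^\p}$ forces $2^\omega \le \omega_1$, and since $2^\omega \ge \omega_1$ always holds, CH holds in $V^\p$.

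The argument is short precisely because all of the real work sits in Proposition 2.3; the only thing to get right is the bookkeeping with cardinalities. The one step I would flag as the crux is the dichotomy for $\mu$: it is the observation that a collapse of $\omega_2^V$ down to $\omega_1$ would, in the presence of the no-injection conclusion, force the continuum to be countable. This is exactly what allows a single statement about sequences of reals to deliver both the preservation of $\omega_2$ and CH simultaneously.
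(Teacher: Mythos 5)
Your proof is correct and takes essentially the same approach as the paper: both rest entirely on Proposition 2.3 (plus Lemma 2.1 for $\omega_1$), by turning any hypothetical failure into names for an $\omega_2^V$-indexed sequence of pairwise distinct functions from $\omega$ to $\omega$, which the proposition forbids. The paper handles the two conclusions separately (CH directly, and $\omega_2$-preservation by re-indexing $\omega_1$ many distinct reals along a collapsing bijection), whereas you funnel both through the single inequality $2^\omega < |\omega_2^V|$ in the extension, but the underlying argument is the same.
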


\begin{proof}
The statement that $\p$ preserves $CH$ is immediate from the proposition. 
Suppose for a contradiction that 
$\p$ does not preserve $\omega_2$, and let $p$ be a condition 
which forces that $|\omega_2^V| = \omega_1$. 
Then we can find a sequence of names which $p$ forces is 
an enumeration of $\omega_1$ many distinct functions 
from $\omega$ to $\omega$ in order type $\omega_2^V$, contradicting 
the proposition.
\end{proof}

\section{Collapsing the Continuum}

In this section we analyze what happens when we force with a coherent adequate type 
forcing over a model in which CH fails. 
We will prove that in this context, the cardinal $(2^\omega)^V$ will be collapsed to have 
size $\omega_1$, its successor in $V$ will become $\omega_2$, and CH will hold in the extension . 

Let $\lambda$ be any cardinal with uncountable cofinality 
such that $2^\omega < \lambda$. 
Let $\langle r_i : i < 2^{\omega} \rangle$ be an enumeration of 
the power set of $\omega$ such that $r_i \ne r_j$ for all $i < j < 2^\omega$. 
Moreover, assume that $\langle r_i : i < 2^\omega \rangle$ is the first such enumeration 
according to the well-ordering of $H(\lambda)$ coded by the predicate $Y$. 
It follows that $Y$ codes the relation $Z$, where $Z(i,n)$ holds if 
$i < 2^\omega$ and $n \in r_i$. 

\begin{lemma}
Suppose that $M$ and $N$ are in $\mathcal X$ and are isomorphic. 
Then $\sigma_{M,N}(\alpha) = \alpha$ for all $\alpha \in M \cap 2^\omega$. 
Therefore $M \cap 2^\omega = N \cap 2^\omega$.
\end{lemma}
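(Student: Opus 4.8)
The plan is to exploit that the isomorphism $\sigma_{M,N}$ preserves the predicate $Y$, together with the fact that $Y$ codes both the injective enumeration $\langle r_i : i < 2^\omega \rangle$ and the membership relation $Z$. First I would record two preliminary facts about ordinals. Since each natural number is definable in $(H(\lambda),\in,Y)$ and $\sigma_{M,N}$ is order preserving on the ordinals of $M$, the map $\sigma_{M,N}$ is the identity on $\omega$. Moreover, $2^\omega$ is definable from $Y$ (it is the length of the canonical $Y$-least enumeration), and $2^\omega < \lambda$ gives $2^\omega \in M$, so $\sigma_{M,N}(2^\omega) = 2^\omega$. In particular, for $\alpha \in M \cap 2^\omega$ the image $\sigma_{M,N}(\alpha)$ again lies in $2^\omega$.

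Next I would argue that $\sigma_{M,N}$ preserves the relation $Z$. Because $\langle r_i : i<2^\omega \rangle$ is the first enumeration of $P(\omega)$ according to the well-ordering coded by $Y$, the relation $Z$ is definable without parameters in $(H(\lambda),\in,Y)$ by some formula $\psi(x,y)$ in the language $\{\in,Y\}$. By the elementarity of $M$ and $N$ in $(H(\lambda),\in,Y)$ and the fact that $\sigma_{M,N}$ is an isomorphism of $(M,\in,Y\cap M)$ onto $(N,\in,Y\cap N)$ (hence preserves every $\{\in,Y\}$-formula), for all $i,n \in M$ I obtain $Z(i,n) \iff \psi^M(i,n) \iff \psi^N(\sigma_{M,N}(i),\sigma_{M,N}(n)) \iff Z(\sigma_{M,N}(i),\sigma_{M,N}(n))$.

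Now fix $\alpha \in M \cap 2^\omega$. Applying the previous equivalence with $i = \alpha$ and $n \in \omega$, and using $\sigma_{M,N}(n) = n$, I get $n \in r_\alpha \iff Z(\alpha,n) \iff Z(\sigma_{M,N}(\alpha),n) \iff n \in r_{\sigma_{M,N}(\alpha)}$ for every $n < \omega$. Hence $r_\alpha = r_{\sigma_{M,N}(\alpha)}$, and since the enumeration is injective, $\alpha = \sigma_{M,N}(\alpha)$, which is the first assertion. For the second assertion, note that $\sigma_{M,N}$ restricted to ordinals is an order-preserving bijection of $M \cap \mathrm{Ord}$ onto $N \cap \mathrm{Ord}$ carrying $2^\omega$ to $2^\omega$, so it maps $M \cap 2^\omega$ bijectively onto $N \cap 2^\omega$; as it fixes each such $\alpha$, I conclude $M \cap 2^\omega = \sigma_{M,N}[M \cap 2^\omega] = N \cap 2^\omega$.

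The main obstacle is the middle step: carefully justifying that $\sigma_{M,N}$ preserves $Z$, which requires unwinding the phrase ``$Y$ codes $Z$'' into a genuine parameter-free definition of $Z$ in $(H(\lambda),\in,Y)$, so that elementarity and the isomorphism property can be combined. Everything else is routine bookkeeping about order isomorphisms on ordinals and the injectivity of the fixed enumeration.
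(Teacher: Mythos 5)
Your proof is correct and follows essentially the same route as the paper's: reduce to showing $r_\alpha = r_{\sigma_{M,N}(\alpha)}$ via injectivity of the enumeration, then use the fact that $Y$ codes $Z$ (so $Z$ is definable in $(H(\lambda),\in,Y)$ and hence preserved by elementarity and the isomorphism) together with the fact that $\sigma_{M,N}$ fixes each natural number. The paper's proof is simply a terser version of yours, leaving implicit the preservation of $Z$, the fixing of $\omega$ and of $2^\omega$, and the final order-isomorphism argument for $M \cap 2^\omega = N \cap 2^\omega$, all of which you justify correctly.
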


\begin{proof}
Let $\alpha \in M \cap 2^\omega$ be given. 
Since $r_i \ne r_j$ for all $i < j < 2^\omega$, it suffices 
to show that $r_{\alpha} = r_{\sigma_{M,N}(\alpha)}$. 
So let $n < \omega$ be given. 
Then $n \in r_\alpha$ iff $M \models Z(\alpha,n)$ iff $N \models Z(\sigma_{M,N}(\alpha),n)$ iff 
$n \in r_{\sigma_{M,N}(\alpha)}$.
\end{proof}

\begin{thm}
Let $\p$ be an $(S,\mathcal Y)$ 
coherent adequate type forcing. 
Let $\mu$ be the cardinal $2^\omega$. 
Then $\p$ collapses $\mu$ to have size $\omega_1$.
\end{thm}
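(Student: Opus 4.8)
The plan is to show that in the generic extension $V[G]$ the cardinal $\mu = 2^\omega$ is a union of at most $\omega_1$ countable sets, and then to invoke preservation of $\omega_1$. Write $A_G = \bigcup \{ A_p : p \in G \}$ for the set of models assembled by the generic. Since any two conditions of $G$ are compatible and the ordering on $\p$ extends the side condition (so that $q \le p$ implies $A_p \subseteq A_q$), the family $\{A_p : p \in G\}$ is directed under inclusion; hence any two models of $A_G$ lie in a common finite $(S)$ coherent adequate set, and $A_G$ is itself coherent adequate.

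First I would establish a density fact: for each $\alpha < \mu$, the set $D_\alpha$ of conditions $p$ such that $\alpha \in N$ for some $N \in A_p$ is dense. Given an arbitrary condition $p$, I use the stationarity of $\mathcal Y$ in $P_{\omega_1}(H(\lambda))$ to choose $M \in \mathcal Y$ with $p, \alpha \in M$. The singleton $\{M\}$ satisfies requirements (1)--(3) in the definition of coherent adequate vacuously, and its set of remainder points $R_{\{M\}} = R_M(M)$ is empty, since $M \cap \omega_2 \subseteq \beta_{M,M}$ (as $M \cap \omega_2$ has no largest element and $\beta_{M,M} = \min(\Lambda \setminus \sup(M \cap \omega_2))$). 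Thus $\{M\}$ is $(S)$ coherent adequate. Since $p \in M$, property (IV) applied to the single model $M$ yields $q \le p$ with $M \in A_q$; as $\alpha \in M$, this $q$ lies in $D_\alpha$.

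Next I would count traces. For $N \in A_G$ put $t(N) = N \cap \mu$, a countable set of ordinals. I claim $t(N)$ depends only on $N \cap \omega_1$: if $N, N' \in A_G$ and $N \cap \omega_1 = N' \cap \omega_1$, then by the equivalences recorded after Definition 1.16 (applicable because $N$ and $N'$ lie in a common coherent adequate set) the models $N$ and $N'$ are isomorphic, whence $N \cap \mu = N' \cap \mu$ by the preceding lemma (Lemma 3.1), using that $\mu = 2^\omega$. Hence $g(\delta) := N \cap \mu$, for any $N \in A_G$ with $N \cap \omega_1 = \delta$ (and $g(\delta) := \emptyset$ if there is none), defines a function $g : \omega_1 \to [\mu]^{\le \omega}$ in $V[G]$.

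Finally I combine the two steps: by the density fact and genericity, every $\alpha < \mu$ belongs to some $N \in A_G$, so $\mu = \bigcup_{\delta < \omega_1} g(\delta)$. Enumerating each countable set of ordinals $g(\delta)$ in increasing order produces a surjection $\omega_1 \times \omega \to \mu$ in $V[G]$, so $|\mu|^{V[G]} \le \omega_1$. Since $\p$ preserves $\omega_1$ (Lemma 2.1) and $\mu \ge \omega_1$, it follows that $\mu$ has size exactly $\omega_1$ in $V[G]$. The step I expect to be the main obstacle is the density argument: confirming that a single witnessing model may legitimately be inserted into the side condition, which depends on the remainder-point computation showing a singleton is $(S)$ coherent adequate and on reading property (IV) in the degenerate case of one model. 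The counting step is then an immediate consequence of Lemma 3.1.
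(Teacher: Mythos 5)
Your proof is correct and takes essentially the same approach as the paper: both decompose $\mu$ as the union of $\omega_1$ many $V$-countable traces $N \cap \mu$ over models $N$ appearing in conditions of the generic filter, using Lemma 3.1 (isomorphic models have the same intersection with $2^\omega$) together with the coherence of models in a common condition, a density argument via property (IV) with a single model, and preservation of $\omega_1$. The only divergence is that the paper first proves that $M < N$ implies $M \cap \mu \in N$, so that the traces form a $\subseteq$-increasing chain; your argument shows this extra monotonicity step is not actually needed for the cardinality count, and your explicit verification that a singleton $\{M\}$ is $(S)$ coherent adequate (with $R_M(M) = \emptyset$) correctly fills in the ``easy density argument'' the paper leaves to the reader.
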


\begin{proof}
Suppose that $p = (x_0,\ldots,x_k,A)$ is a condition, $M$ and $N$ are in $A$, 
and $M < N$. 
We claim that $M \cap \mu \in N$. 
By coherence, fix $N'$ in $A$ such that $N$ and $N'$ are isomorphic and 
$M \in N'$. 
Then $M \cap \mu \in N'$. 
By Lemma 3.1, $\sigma_{N',N}(M \cap \mu) = \sigma_{N',N}[M \cap \mu] = M \cap \mu$. 
Hence $M \cap \mu$ is in $N$.

Let $G$ be a generic filter for $\p$. 
Let $J := \{ N : \exists (x_0,\ldots,x_k,A) \in G \ (N \in A) \}$. 
Then for all $M$ and $N$ in $J$, if $M \cap \omega_1 = N \cap \omega_1$ 
then $M$ and $N$ are isomorphic. 
For in that case the conditions in $G$ witnessing and $M$ and $N$ are in $J$ have a 
common extension in $G$, and the set of models in this condition is coherent adequate. 
By Lemma 3.1, $M \cap \mu = N \cap \mu$. 
Similarly, if $M$ and $N$ are in $J$ and $M \cap \omega_1 < N \cap \omega_1$, then 
we can find a condition $(x_0,\ldots,x_k,A)$ in $G$ such that $M$ and $N$ are in $A$. 
By the previous paragraph, $M \cap \mu \in N$. 
In particular, $M \cap \mu \subseteq N \cap \mu$.

It follows that $K := \{ N \cap \mu : N \in J \}$ is well ordered by the subset relation in 
order type $\omega_1$. 
Using property (IV) in the definition of a coherent adequate type forcing, 
an easy density argument 
shows that for any $i < \mu$, there is $N$ in $J$ such that $i \in N \cap \mu$. 
It follows that $\bigcup K = \mu$. 
Hence in $V[G]$, $\mu$ is the union of $\omega_1$ many countable sets and therefore 
has size $\omega_1$.
\end{proof}

\begin{lemma}
Assume that $\omega_2 \le 2^\omega < \lambda$. 
Suppose that $R_0, \ldots, R_k$ are subsets of $H(\lambda)$. 
Then for any set $z \in H(\lambda)$, there are $M$ and $N$ in $\mathcal Y$ satisfying 
the following:
\begin{enumerate}
\item $z \in M \cap N$;
\item $\{ M, N \}$ is $(S)$ coherent adequate;
\item the structures $(M,\in,Y \cap M,R_0 \cap M,\ldots,R_k \cap M)$ 
and $(N,\in,Y \cap N,R_0 \cap N,\ldots,R_k \cap N)$ are 
elementary in $(H(\lambda),\in,Y,R_0,\ldots,R_k)$ and are isomorphic;
\item there exists $\alpha \in M \cap (2^\omega)^+$ and $\beta \in N \cap (2^\omega)^+$ such that 
$\alpha \ne \beta$ and $\sigma_{M,N}(\alpha) = \beta$. 
\end{enumerate}
\end{lemma}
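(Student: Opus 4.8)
The plan is to reproduce the reflection argument of Lemma 2.2 in structure, but performed at $(2^\omega)^+$ rather than at $\omega_2$, and with the single appeal to CH replaced by a direct closure requirement on the large model. The guiding observation is Lemma 3.1: any isomorphism between members of $\mathcal X$ is the identity on ordinals below $2^\omega$, so the first place an isomorphism $\sigma_{M,N}$ can move an ordinal is the interval $[2^\omega,(2^\omega)^+)$. Thus I aim to build isomorphic $M,N \in \mathcal Y$ whose disagreement is pushed entirely above $2^\omega$; then $M \cap \omega_2 = N \cap \omega_2$ comes for free, the adequacy data is trivial, and the moved point required in (4) appears at $(2^\omega)^+$.

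First I would fix $N^* \prec H(2^\lambda)$ with $|N^*| = 2^\omega$, $(N^*)^\omega \subseteq N^*$, containing $Y,z,S,\mathcal Y,R_0,\dots,R_k$ and $\langle r_i : i < 2^\omega\rangle$, chosen from the club on which $\delta^* := N^* \cap (2^\omega)^+$ is an ordinal, so that $2^\omega < \delta^* < (2^\omega)^+$ and $\delta^* \notin N^*$. Here the role of CH in Lemma 2.2 --- guaranteeing $H(\omega_1) \subseteq N^*$ --- is instead imposed directly: since $|H(\omega_1)| = 2^\omega = |N^*|$, I simply demand $H(\omega_1) \subseteq N^*$. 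Using that $\mathcal Y$ is stationary, fix a countable $N \in \mathcal Y$ with $z,\delta^* \in N$, elementary in $(H(\lambda),\in,Y,R_0,\dots,R_k)$, and with $N \cap \omega_2$ having no largest element. With $\overline{\mathfrak N}$ the transitive collapse of $N$, $\sigma_N$ its collapsing map, and $T(a,b)$ the relation ``$a \in N \cap N^*$ and $\sigma_N(a) = b$'', one has $\overline{\mathfrak N} \in H(\omega_1) \subseteq N^*$ and $N \cap N^*, N \cap \delta^*, T \in N^*$ by countable closure, exactly as before.

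Then $(N,\delta^*)$ witnesses in $H(2^\lambda)$ the existence of $M,\delta$ with $M \in \mathcal Y$, $2^\omega < \delta < (2^\omega)^+$, $z,\delta \in M$, $M \cap \delta = N \cap \delta^*$, $N \cap N^* \subseteq M$, $M$ elementary in the expanded structure, the transitive collapse of $M$ equal to $\overline{\mathfrak N}$, and $T(a,b) \Rightarrow \sigma_M(a) = b$; since every parameter lies in $N^*$, elementarity produces such $M,\delta \in N^*$. As $M$ and $N$ have the same transitive collapse they are isomorphic, and the computation of Lemma 2.2 --- for $a \in M \cap N \subseteq N \cap N^*$, clause (h) gives $\sigma_M(a) = \sigma_N(a)$, hence $\sigma_{N,M}(a) = a$ --- shows $M$ and $N$ are strongly isomorphic. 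Because $\delta,\delta^* > \omega_2$, restricting $M \cap \delta = N \cap \delta^*$ to $\omega_2$ yields $M \cap \omega_2 = N \cap \omega_2$; since this set has no maximum it is contained in $\beta_{M,N}$, so $M \sim N$ and $R_M(N) = R_N(M) = \emptyset$, making $\{M,N\}$ $(S)$ coherent adequate. Finally, $\sigma_{M,N}$ is an order isomorphism fixing the common initial segment $M \cap \delta = N \cap \delta^*$ pointwise, so it maps the least ordinal of $M$ above this set, namely $\delta$, to the least ordinal of $N$ above it, namely $\delta^*$; since $\delta \in N^* \cap (2^\omega)^+ = \delta^*$ gives $\delta < \delta^*$, setting $\alpha := \delta$ and $\beta := \delta^*$ establishes clause (4).

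The only genuine departure from Lemma 2.2, and the step I would handle most carefully, is the elimination of CH. Two points must be checked: that $(N^*)^\omega \subseteq N^*$ is compatible with $|N^*| = 2^\omega$ (it is, since $\cof(2^\omega) > \omega$ gives $(2^\omega)^\omega = 2^\omega$), and that $H(\omega_1) \subseteq N^*$ can be arranged unconditionally (it can, as $|H(\omega_1)| = 2^\omega$), so that the collapse $\overline{\mathfrak N}$ still reflects into $N^*$. The second subtlety, invisible in Lemma 2.2 because there the divergence was deliberately placed in $S \cap \cof(\omega_1)$, is that here I must guarantee the divergence creates no remainder point in $\omega_2 \supseteq S$; pushing all disagreement above $2^\omega$ and choosing $N$ with $N \cap \omega_2$ having no largest element is precisely what forces $R_M(N) = R_N(M) = \emptyset$ and keeps $\{M,N\}$ $(S)$ coherent adequate.
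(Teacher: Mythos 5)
Your proof is correct, but it takes a genuinely different route from the paper's proof of this lemma. The paper does not use reflection here: it fixes, for each $i < (2^\omega)^+$, a model $N_i \in \mathcal Y$ containing $z$ and $i$, then uses $|H(\omega_1)| = 2^\omega$ to stabilize the transitive collapse on a cofinal set of indices, the $\Delta$-system lemma to extract a common root, and a second pigeonhole (only $2^\omega$ possible restrictions of the collapse maps to the root) to make the surviving models pairwise strongly isomorphic; taking $M = N_i$ and $N = N_j$ with $j > \sup(M \cap (2^\omega)^+)$, the ordinal $\alpha := \sigma_{N,M}(j)$ lies strictly below $\beta := j$, giving clause (4), while $M \cap \omega_2 = N \cap \omega_2$ (Lemma 3.1, using $\omega_2 \le 2^\omega$) makes $\{M,N\}$ trivially $(S)$ coherent adequate with empty remainder sets. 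You instead transplant the reflection argument of Lemma 2.2 to $(2^\omega)^+$, and your substitutions are exactly the right ones: $|N^*| = 2^\omega$ with $N^* \cap (2^\omega)^+$ an ordinal and with $H(\omega_1) \subseteq N^*$ imposed directly (legitimate, since $|H(\omega_1)| = 2^\omega$ and $(2^\omega)^\omega = 2^\omega$, so the countably closed chain construction goes through), after which the verification is sound -- the divergence point sits above $2^\omega \ge \omega_2$, so adequacy is trivial and both remainder sets are empty (hence no constraint from $S$, in contrast with Lemma 2.2 where the divergence had to be planted in $S \cap \cof(\omega_1)$), and $\sigma_{M,N}(\delta) = \delta^*$ because the isomorphism fixes the common initial segment $M \cap \delta = N \cap \delta^*$ and must carry $\min(M \setminus \delta) = \delta$ to $\min(N \setminus \delta^*) = \delta^*$. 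As for what each approach buys: the paper's counting argument is combinatorially self-contained and avoids building a countably closed hull of size continuum with ordinal intersection at $(2^\omega)^+$; yours gives a uniform treatment of Lemmas 2.2 and 3.3 and tighter control over the pair produced (full agreement below $\delta$, with the single moved ordinal identified explicitly). Two cosmetic points: your requirement that $N \cap \omega_2$ have no largest element is automatic for any elementary submodel (it is closed under ordinal successor below $\omega_2$), and $(2^\omega)^\omega = 2^\omega$ holds by cardinal arithmetic alone, with no appeal to cofinality needed.
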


\begin{proof}
For each $i < (2^\omega)^+$ fix $N_i$ in $\mathcal Y$ such that $z$ and $i$ are in $N_i$ and 
$N_i$ is an elementary substructure of $(H(\lambda),\in,Y,R_0,\ldots,R_k)$. 
This is possible since $\mathcal Y$ is stationary. 
Let $\mathfrak N_i$ denote the structure 
$(N_i,\in,Y \cap N_i,R_0 \cap N_i,\ldots,R_k \cap N_i)$ and let 
$\overline{\mathfrak N}_i$ denote its transitive collapse. 
Since $H(\omega_1)$ has size $2^\omega$, we can fix a cofinal set $P \subseteq (2^\omega)^+$ 
such that for all $i < j$ in $P$, $\overline{\mathfrak N}_i = \overline{\mathfrak N}_j$. 
It follows that $\mathfrak N_i$ and $\mathfrak N_j$ are isomorphic.

By the $\Delta$-system lemma, there is a cofinal set $P' \subseteq P$ and a countable set $z$ 
such that for all $i < j$ in $P'$, $N_i \cap N_j = z$. 
As there are $2^\omega$ many possibilities for $\sigma_{N_i} \restriction z$ for $i \in P'$, 
where $\sigma_{N_i}$ is the transitive collapsing map of $N_i$, 
we can find a cofinal set $P'' \subseteq P'$ such that for all $i < j$ in $P''$, 
$\sigma_{N_i} \restriction z = \sigma_{N_j} \restriction z$. 
It follows that if $a \in N_i \cap N_j = z$, then 
$\sigma_{N_i,N_j}(a) = \sigma_{N_j}^{-1}(\sigma_{N_i}(a)) = a$. 
So $N_i$ and $N_j$ are strongly isomorphic.

Let $M$ be equal to $N_i$ for some fixed $i \in P''$. 
Then $\sup(M \cap (2^\omega)^+) < (2^{\omega})^+$ since $M$ is countable. 
So we can find $i < j$ in $P''$ such that $\sup(M \cap (2^\omega)^+) < j$. 
Let $N := N_j$. 
We will prove that $M$ and $N$ are as desired. 

Properties (1) and (3) are immediate. 
Since $\omega_2 \le 2^\omega$ and $M$ and $N$ are isomorphic, 
$M \cap \omega_2 = N \cap \omega_2$ by Lemma 3.1. 
So trivially $\{ M, N \}$ is adequate. 
Also $R_M(N)$ and $R_N(M)$ are empty, so $\{ M, N \}$ is $(S)$ adequate. 
As $M$ and $N$ are strongly isomorphic, $\{ M, N \}$ is $(S)$ coherent adequate. 
This verifies property (2). 
For (4), let $\beta := j$. 
Then $\sup(M \cap (2^\omega)^+) < \beta$. 
Since $(2^\omega)^+$ is either equal to $\lambda$ or definable in $H(\lambda)$, 
$\alpha := \sigma_{N,M}(\beta)$ is in $M \cap (2^\omega)^+$ and hence is below $\beta$. 
So $\alpha \ne \beta$ and $\sigma_{M,N}(\alpha) = \beta$.
\end{proof}

\begin{proposition}
Let $\p$ be an $(S,\mathcal Y)$ coherent adequate type forcing. 
Let $\mu := (2^\omega)^+$. 
If $p$ is a condition which forces that $\langle f_i : i < \mu \rangle$ is a sequence of 
functions from $\omega$ to $\omega$, then there is $q \le p$ and $i < j$ such that 
$q$ forces that $\dot f_i = \dot f_j$.
\end{proposition}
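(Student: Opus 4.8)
The plan is to mimic the proof of Proposition 2.3 almost exactly, but replacing the appeal to Lemma 2.2 with the corresponding non-CH amalgamation result, namely Lemma 3.3. First I would define the relation $R$ on $H(\lambda)$ by letting $R(z,i,n,m)$ hold if $z \in \p$ and $z \Vdash_\p \dot f_i(n) = m$. Then, applying Lemma 3.3 with this single predicate $R$ (so $k=0$) and with $z = p$, I obtain $M$ and $N$ in $\mathcal Y$ such that $p \in M \cap N$, the pair $\{M,N\}$ is $(S)$ coherent adequate, the structures $(M,\in,Y\cap M,R\cap M)$ and $(N,\in,Y\cap N,R\cap N)$ are elementary in $(H(\lambda),\in,Y,R)$ and isomorphic, and there exist $\alpha \in M \cap (2^\omega)^+$ and $\beta \in N \cap (2^\omega)^+$ with $\alpha \ne \beta$ and $\sigma_{M,N}(\alpha) = \beta$.

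Next I would set $\sigma := \sigma_{M,N}$ and let $i := \alpha$ and $j := \beta$. Since $\alpha \ne \beta$ and $\sigma(\alpha) = \beta$, I may assume without loss of generality (relabeling via $\sigma^{-1}$ if necessary) that $i < j$; the essential point is merely that $i$ and $j$ are distinct indices below $\mu$ with one mapped to the other by the isomorphism. The key transfer claim is that for all $z \in M \cap \p$ and integers $n,m$, if $z \Vdash_\p \dot f_i(n) = m$ then $\sigma(z) \Vdash_\p \dot f_j(n) = m$. This follows because $z \Vdash_\p \dot f_i(n)=m$ means $R(z,i,n,m)$ holds, and since $z,i,n,m \in M$ with $\sigma(i)=j$ and $\sigma$ an isomorphism of the displayed structures, $R(\sigma(z),j,n,m)$ holds, which by definition of $R$ says exactly $\sigma(z) \Vdash_\p \dot f_j(n)=m$.

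Now, because $M$ and $N$ are isomorphic sets in $\mathcal Y$, the pair $\{M,N\}$ is $(S)$ coherent adequate, and $p \in M \cap N$, property (IV) in the definition of an $(S,\mathcal Y)$ coherent adequate type forcing yields a condition $q \le p$ with $M,N \in B$ (where $B$ is the model-part of $q$). By property (V), $q$ is strongly $M$-generic. I then claim $q$ forces $\dot f_i = \dot f_j$. Assuming otherwise, fix $r \le q$ and $n<\omega$ with $r \Vdash_\p \dot f_i(n) \ne \dot f_j(n)$. Let $D$ be the set of $w \in \p \cap M$ deciding $\dot f_i(n)$; using elementarity of $M$ in $(H(\lambda),\in,Y,R)$, $D$ is dense in $\p \cap M$, exactly as in Proposition 2.3. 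Strong $M$-genericity makes $D$ predense below $q$, so some $w \in D$ is compatible with $r$; fix $s \le w,r$. Since $M,N$ are isomorphic models occurring in $A_s$ and $w \in M$, property (III) gives $s \le \sigma(w)$, and the transfer claim gives $\sigma(w) \Vdash_\p \dot f_j(n)=m$ where $w \Vdash_\p \dot f_i(n)=m$. Thus $s$ forces $\dot f_i(n)=\dot f_j(n)$, contradicting $s \le r$.

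The main obstacle is the same subtlety that justifies property (III) being applicable: I must ensure the isomorphic pair $\{M,N\}$ actually persists correctly into the extension $s$ and that $\sigma = \sigma_{M,N}$ is the relevant isomorphism invoked there. Beyond this, the genuinely new ingredient over Proposition 2.3 is supplying two \emph{distinct} indices $i,j$ below $\mu = (2^\omega)^+$ related by the isomorphism; Lemma 3.3 does precisely this in the CH-failure regime, where $M \cap 2^\omega = N \cap 2^\omega$ forces any distinct related ordinals to live at the level of $(2^\omega)^+$ rather than at $\omega_2$. Once Lemma 3.3 provides such $i,j$, the remainder of the argument is a verbatim adaptation of Proposition 2.3, so I expect no serious difficulty there.
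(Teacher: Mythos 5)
Your proposal is correct and is exactly the paper's intended proof: the paper proves Proposition 3.4 by declaring it "nearly identical to the proof of Proposition 2.3, replacing $\omega_2^V$ with $\mu$ and replacing references to Lemma 2.2 with references to Lemma 3.3," which is precisely the substitution you carry out. Your handling of the one genuine difference (Lemma 3.3 only gives distinct $\alpha \ne \beta$ rather than an ordered pair as in Lemma 2.2(4)) is fine, since the roles of $M$ and $N$ are symmetric and forced equality of $\dot f_\alpha$ and $\dot f_\beta$ is symmetric in the indices.
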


The proof is nearly identical to the proof of Proposition 2.3, replacing $\omega_2^V$ with $\mu$ 
and replacing references to Lemma 2.2 with references to Lemma 3.3. 

\begin{corollary}
Let $\p$ be an $(S,\mathcal Y)$ coherent adequate type forcing. 
Then $\p$ collapses $(2^\omega)^V$ to have size $\omega_1$, 
forces that the successor of $(2^\omega)^V$ in $V$ is equal to $\omega_2$, 
and forces CH.
\end{corollary}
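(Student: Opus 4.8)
The plan is to read off the three assertions from Theorem 3.2 and Proposition 3.4 by tracking, in a generic extension $V[G]$, the cardinality of the fixed ordinal $\nu := ((2^\omega)^+)^V$. Write $\mu := (2^\omega)^V$. The first assertion, that $\p$ collapses $\mu$ to have size $\omega_1$, is exactly the content of Theorem 3.2, so no further work is required there. I would also record that $\omega_1$ is preserved, by Lemma 2.1, so that $\omega_1^{V[G]} = \omega_1^V$.

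Next I would bound $|\nu|^{V[G]}$ from above. Since $\nu = \mu^+$ in $V$, every ordinal $\xi < \nu$ satisfies $|\xi|^V \le \mu$, and hence $|\xi|^{V[G]} \le |\mu|^{V[G]} = \omega_1$ by Theorem 3.2. Thus every proper initial segment of $\nu$ has cardinality at most $\omega_1$ in $V[G]$, which forces $|\nu|^{V[G]} \le \omega_2$ (otherwise $\omega_2^{V[G]}$ would be an ordinal below $\nu$ of $V[G]$-cardinality $\omega_2$). The same observation shows that if $|\nu|^{V[G]} = \omega_2$, then in fact $\nu = \omega_2^{V[G]}$, since $\nu$ is then the least ordinal of $V[G]$-cardinality $\omega_2$.

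The remaining input is Proposition 3.4, which I would convert into a statement of cardinal arithmetic in the standard way. Suppose toward a contradiction that some condition $p$ forces $\langle \dot f_i : i < \nu \rangle$ to be an injective sequence of functions from $\omega$ to $\omega$. By Proposition 3.4 there are $q \le p$ and $i < j$ with $q \Vdash \dot f_i = \dot f_j$; but $q \le p$ also forces injectivity, a contradiction. Hence no condition forces an injective $\nu$-sequence of reals, so $\p$ forces that there is no injection of $\nu$ into ${}^\omega\omega$, i.e.\ $(2^\omega)^{V[G]} < |\nu|^{V[G]} \le \omega_2$. Therefore $(2^\omega)^{V[G]} = \omega_1$, which is CH, the third assertion. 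Combining this with the previous paragraph, $\omega_1 = (2^\omega)^{V[G]} < |\nu|^{V[G]} \le \omega_2$ forces $|\nu|^{V[G]} = \omega_2$, whence $\nu = \omega_2^{V[G]}$; this is the second assertion, that the successor of $(2^\omega)^V$ in $V$ becomes $\omega_2$ in the extension.

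The argument is essentially a bookkeeping consequence of the two preceding results, so there is no single hard step. The point requiring care is the conversion of Proposition 3.4, which speaks of a single condition and a single sequence of names, into the global conclusion that $\nu$ admits no injection into the reals of $V[G]$, together with the observation that the standing hypothesis $\omega_2 \le 2^\omega < \lambda$ of the section is exactly what makes Lemma 3.3, and hence Proposition 3.4, available.
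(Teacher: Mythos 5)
Your proposal is correct and takes essentially the same approach as the paper: the paper proves this corollary by combining Theorem 3.2 with Proposition 3.4 via the same cardinal-arithmetic conversion used to deduce Corollary 2.4 from Proposition 2.3, which is precisely what you do. The only difference is organizational — you package the argument as the single chain $(2^\omega)^{V[G]} < |\nu|^{V[G]} \le \omega_2^{V[G]}$ and read off both CH and $\nu = \omega_2^{V[G]}$ at once, whereas the paper's template runs two separate proofs by contradiction — and your closing remarks correctly identify the two points of care (the name-conversion step and the standing hypothesis $\omega_2 \le 2^\omega < \lambda$ needed for Lemma 3.3).
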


The proof of the corollary follows from Proposition 3.4 in the same way that Corollary 2.4 
follows from Proposition 2.3.

\section{Some Lemmas on Closure}

The next two sections will develop the technology needed to amalgamate 
coherent adequate style forcings over countable elementary substructures. 
There are several ways in which this development goes beyond the 
analogous results in \cite{jk23}. 
Besides the more general context of $H(\lambda)$, 
we also give an analysis of the remainder 
points produced under such amalgamation, and work out general 
results on closure. 
The present section handles the topic of 
closure.\footnote{Lemmas 4.1 and 4.2 are also true by the same arguments 
when coherent adequate sets 
are replaced by symmetric systems in the sense of \cite{mota}.}

Let $A$ be a subset of $\mathcal X$ and let $x$ be a finite subset of $H(\lambda)$. 
We say that the pair $(x,A)$ is \emph{closed} if whenever $N$ and $N'$ are 
isomorphic sets in $A$ and $a \in x \cap N$, then 
$\sigma_{N,N'}(a) \in x$.

\begin{lemma}
Let $A$ be a finite 
coherent adequate set and $x$ a finite subset of $H(\lambda)$. 
Let $y$ be the set 
$$
x \cup \{ \sigma_{M,M'}(a) : M, M' \in A, \ 
M \cong M', \ a \in x \cap M \}.
$$
Then $(y,A)$ is closed.
\end{lemma}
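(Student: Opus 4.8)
The plan is to show that the set $y$ defined from $x$ by adding in all images $\sigma_{M,M'}(a)$ of elements of $x$ under strong isomorphisms in $A$ satisfies the closure condition: whenever $N, N' \in A$ are isomorphic (equivalently, by the remarks following Definition 1.16, strongly isomorphic, since $A$ is coherent adequate) and $b \in y \cap N$, then $\sigma_{N,N'}(b) \in y$. First I would fix such $N, N'$ and $b$, and split into two cases according to whether $b$ already lies in $x$ or was added to $y$. In the first case, $b \in x \cap N$, so $\sigma_{N,N'}(b)$ is exactly one of the elements thrown into $y$ by definition (taking $M = N$, $M' = N'$, $a = b$), hence $\sigma_{N,N'}(b) \in y$ immediately.

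The substantive case is when $b \in y \setminus x$, so $b = \sigma_{M,M'}(a)$ for some $M \cong M'$ in $A$ and some $a \in x \cap M$. I would like to conclude that $\sigma_{N,N'}(b)$ again has the form $\sigma_{P,P'}(a')$ for suitable $P \cong P'$ in $A$ and $a' \in x \cap P$. The natural candidate is to track where $a$ goes: since $b = \sigma_{M,M'}(a) \in N$, I want to produce a model in $A$ that is strongly isomorphic to $N'$ and carries $a$ to $\sigma_{N,N'}(b)$. The key composition identity to exploit is that all the collapsing maps commute appropriately — for strongly isomorphic $M, M', N, N'$ the maps $\sigma_{M,M'}$, $\sigma_{N,N'}$ are restrictions of transitive-collapse maps, and $\sigma_{M,N'} = \sigma_{N,N'} \circ \sigma_{M,N}$ whenever the relevant models collapse to the same structure.

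Concretely, I expect the argument to run as follows. From $b = \sigma_{M,M'}(a) \in N$ and $a \in x$, I first want to locate $a$ inside some model of $A$ that is isomorphic to $N$, so that applying $\sigma_{N,N'}$ to $b$ corresponds to moving $a$ by a strong isomorphism. Because $A$ is coherent adequate, requirements (2) and (3) of Definition 1.16 let me pull the relevant witnessing models and their images back into $A$; this is exactly the mechanism used in the proof of Lemma 1.18(2), and I would mimic that bookkeeping. The goal is to exhibit $P, P' \in A$ with $P \cong P'$ and $a \in x \cap P$ such that $\sigma_{P,P'}(a) = \sigma_{N,N'}(b)$, which then places $\sigma_{N,N'}(b)$ in $y$ by definition.

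The main obstacle, and the part requiring genuine care, is verifying that the composed isomorphism is again realized \emph{within $A$} rather than merely in $\mathcal X$ — that is, ensuring the intermediate models $P, P'$ actually belong to $A$ and that the composite map agrees with $\sigma_{P,P'}$. This is where coherence requirement (3) (closure of $A$ under $\sigma_{M,N}$-images of members) and the uniqueness and commutativity of isomorphisms (Lemma 1.15 and the composition law $\sigma_{M,P} = \sigma_{N,P} \circ \sigma_{M,N}$) do the work. Once the composite is identified with a single strong isomorphism between two members of $A$ sending an element of $x$ to $\sigma_{N,N'}(b)$, the closure of $y$ follows directly from its defining formula.
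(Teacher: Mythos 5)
Your reduction of the problem is the correct one, and it coincides with the paper's: after dispatching the trivial case $b \in x$, the goal is to exhibit $P \cong P'$ in $A$ with $a \in x \cap P$ and $\sigma_{P,P'}(a) = \sigma_{N,N'}(b)$, so that membership in $y$ follows from its defining formula. But everything after that reduction is deferred in your proposal, and what is deferred is the entire proof. The missing idea is a case analysis: since $b = \sigma_{M,M'}(a)$ lies in $M' \cap N$ and $A$ is adequate, exactly one of $M' < N$, $M' \sim N$, $N < M'$ holds, and the witnessing pair is constructed differently in each case. When $M' \sim N$ (hence $M'$ and $N$ are strongly isomorphic, as $A$ is coherent), one takes $(P,P') = (M,N')$, using that $\sigma_{M',N}$ fixes $b$. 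When $M' < N$, coherence gives $N^* \in A$ isomorphic to $N$ with $M' \in N^*$; then $\sigma_{N^*,N'}(b) = \sigma_{N,N'}(b)$ by Lemma 1.13, coherence puts $M_1 := \sigma_{N^*,N'}(M')$ into $A$, Lemma 1.10 gives $\sigma_{M',M_1} = \sigma_{N^*,N'} \restriction M'$, and $(P,P') = (M,M_1)$ works. When $N < M'$, one instead fixes $M_0 \in A$ isomorphic to $M'$ with $N \in M_0$, forms $N^* := \sigma_{M_0,M}(N) \in A$, uses Lemma 1.13 to see that $\sigma_{M_0,M}(b) = \sigma_{M',M}(b) = a$ lies in $N^*$, and takes $(P,P') = (N^*,N')$. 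None of these constructions, nor the trichotomy organizing them, appears in your proposal; ``mimicking the bookkeeping'' of the earlier coherence lemmas is not a substitute, because which auxiliary models to introduce and which composition identities to verify depends on the case.

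Moreover, your one concrete guiding idea --- produce a model in $A$ isomorphic to $N'$ that contains $a$ and carries it to $\sigma_{N,N'}(b)$ --- cannot serve as the uniform mechanism, because it fails in the case $M' < N$. There the paper's witnessing pair $(M,M_1)$ consists of models isomorphic to $M$, whose intersection with $\omega_1$ is strictly below that of $N'$; and although coherence does supply some $N'' \in A$ isomorphic to $N'$ with $a \in M \in N''$, the desired identity $\sigma_{N'',N'}(a) = \sigma_{N,N'}(b)$ holds if and only if $a \in \sigma_{N^*,N''}(M')$ (with $N^*$ as above), which an arbitrary witness $N''$ provided by the coherence axiom has no reason to satisfy: $A$ may contain several models isomorphic to $M$ inside $N''$, and $\sigma_{N^*,N''}(M')$ need not be the one containing $a$. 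So the case $M' < N$ is a genuine gap in your argument, not a routine verification, and it is precisely the case where the paper's proof switches to a witnessing pair of smaller height rather than one isomorphic to $N'$.
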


\begin{proof}
Let $N$ and $N'$ be isomorphic sets in $A$ and $a \in y \cap N$. 
We will prove that $\sigma_{N,N'}(a) \in y$. 
If $a \in x$, then $\sigma_{N,N'}(a) \in y$ by the definition of $y$. 
Otherwise there are $M$ and $M'$ in $A$ which are isomorphic and 
$b$ in $x \cap M$ such that $a = \sigma_{M,M'}(b)$. 
Then $a$ is in $M' \cap N$.

\bigskip

\emph{Case 1:} $M' < N$. 
Fix $N^* \in A$ isomorphic to $N$ with $M' \in N^*$. 
Then $a \in N^* \cap N$, so 
$\sigma_{N^*,N'}(a) = \sigma_{N,N'}(a)$. 
Let $M_1 = \sigma_{N^*,N'}(M')$. 
Then $M_1 \in A$ and $\sigma_{M',M_1} = \sigma_{N^*,N'} \restriction M'$ 
by Lemma 1.10. 
Now $\sigma_{M,M_1}(b)$ is in $y$ by the definition of $y$. 
But $\sigma_{M,M_1}(b) = \sigma_{M',M_1}(\sigma_{M,M'}(b)) = 
\sigma_{N^*,N'}(a) = \sigma_{N,N'}(a)$. 
Hence $\sigma_{N,N'}(a) \in y$.

\bigskip

\emph{Case 2:} $M' \cong N$. 
Since $a \in M' \cap N$, $\sigma_{M',N}(a) = a$. 
By the definition of $y$, $\sigma_{M,N'}(b) \in y$. 
But $\sigma_{M,N'}(b) = \sigma_{N,N'}(\sigma_{M',N}(\sigma_{M,M'}(b))) = 
\sigma_{N,N'}(\sigma_{M',N}(a)) = \sigma_{N,N'}(a)$. 
So $\sigma_{N,N'}(a) \in y$.

\bigskip

\emph{Case 3:} $N < M'$. 
Fix $M_0 \in A$ isomorphic to $M'$ such that $N \in M_0$. 
Let $N^* := \sigma_{M_0,M}(N)$, which is in $A$. 
Then $\sigma_{N^*,N} = \sigma_{M,M_0} \restriction N^*$ by Lemma 1.10. 
Since $a \in M' \cap N$, $a \in M' \cap M_0$. 
By Lemma 1.13, $\sigma_{M_0,M}(a) = \sigma_{M',M}(a) = b$. 
In particular, since $a \in N$, 
$b = \sigma_{M_0,M}(a) \in \sigma_{M_0,M}(N) = N^*$. 
By the definition of $y$, $\sigma_{N^*,N'}(b) \in y$. 
But $\sigma_{N^*,N'}(b) = \sigma_{N,N'}(\sigma_{N^*,N}(b)) = 
\sigma_{N,N'}(\sigma_{M,M_0}(b)) = \sigma_{N,N'}(a)$. 
So $\sigma_{N,N'}(a) \in y$.
\end{proof}

The next lemma analyzes closure in the context of 
amalgamation over countable models.

\begin{lemma}
Let $A$ be a coherent adequate set, $N \in A$, and suppose that $B$ is a coherent 
adequate set with $A \cap N \subseteq B \subseteq N$. 
Let $x$ and $y$ be subsets of $H(\lambda)$ such that $(x,A)$ is closed, 
$(y,B)$ is closed, and $x \cap N \subseteq y \subseteq N$. 
Define $C$ by 
$$
\{ M \in A : N \le M \} \cup 
\{ \sigma_{N,N'}(K) : N' \in A, \ N \cong N', \ K \in B \}.
$$
Define $z$ by 
$$
x \cup \{ \sigma_{N,N'}(a) : N' \in A, \ N \cong N', \ a \in y \}.
$$
Then the pair $(z,C)$ is closed, $x \cup y \subseteq z$, and $z \cap N = y$.
\end{lemma}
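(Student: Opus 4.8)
The plan is to verify the three conclusions in order: first $x \cup y \subseteq z$, then $z \cap N = y$, and finally that $(z,C)$ is closed, since the closure argument will reuse the first two facts. For the inclusion $x \cup y \subseteq z$, the part $x \subseteq z$ is immediate from the definition of $z$. For $y \subseteq z$, I would take $a \in y$; since $N \cong N$ trivially via the identity map $\sigma_{N,N} = \mathrm{id}$, the defining union for $z$ includes $\sigma_{N,N}(a) = a$, so $a \in z$. (Here one uses that $N$ itself qualifies as an $N'$ with $N \cong N'$.)

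For $z \cap N = y$, the inclusion $y \subseteq z \cap N$ follows from $y \subseteq z$ and the hypothesis $y \subseteq N$. The reverse inclusion $z \cap N \subseteq y$ is the first delicate point. Take $c \in z \cap N$. If $c \in x$, then $c \in x \cap N \subseteq y$ by hypothesis, and we are done. Otherwise $c = \sigma_{N,N'}(a)$ for some $N' \in A$ with $N \cong N'$ and some $a \in y \subseteq N$. I would argue that $c \in N$ forces $\sigma_{N,N'}(a) \in N \cap N'$, whence by strong isomorphism $\sigma_{N,N'}(a) = \sigma_{N,N'}(\sigma_{N,N'}(a))^{\phantom{.}}$ fixes it; more directly, since $\sigma_{N,N'}$ maps $N$ bijectively onto $N'$, having the image $\sigma_{N,N'}(a)$ land back in $N \cap N'$ means $\sigma_{N,N'}$ is the identity on it (strong isomorphism), so $\sigma_{N,N'}(a) = a \in y$. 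The key is that $c = \sigma_{N,N'}(a) \in N'$ always, and the extra hypothesis $c \in N$ places it in $N \cap N'$ where the strong isomorphism is trivial.

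The main work is showing $(z,C)$ is closed, and this is where I expect the real obstacle. I must show: whenever $P, P' \in C$ are isomorphic and $d \in z \cap P$, then $\sigma_{P,P'}(d) \in z$. The difficulty is that $C$ has two kinds of members — models $M \in A$ with $N \le M$, and images $\sigma_{N,N'}(K)$ of members $K$ of the ``small'' set $B$ living inside various copies $N'$ of $N$. Correspondingly $z$ has elements coming from $x$ and elements of the form $\sigma_{N,N'}(a)$. The proof will split into cases according to which kind of model $P$ and $P'$ are and which kind of element $d$ is, and in each case I would track the element through the relevant isomorphisms, invoking the closure of $(x,A)$ when $P,P'$ lie in $A$ above $N$ and $d \in x$, and invoking the closure of $(y,B)$ together with Lemma~1.10 and Lemma~1.15 (which tells us that images of strongly isomorphic small models under the $\sigma_{N,N'}$ remain strongly isomorphic) when the models or elements come from the $B$-side. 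The genuinely subtle case is the mixed one, where $d = \sigma_{N,N'}(a)$ with $a \in y$ but $P,P' \in A$ are large models above $N$; here I would compose the transfer maps carefully, using that $\sigma_{N,N'} \restriction K = \sigma_{K,\sigma_{N,N'}(K)}$ from Lemma~1.10 and the commutativity of composition of isomorphisms, reducing everything back to closure of $(y,B)$ inside a single copy of $N$. Managing the bookkeeping of these composed isomorphisms, and ensuring each intermediate model genuinely lies in $C$ so the target lands in $z$, is the crux of the argument.
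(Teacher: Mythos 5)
Your verification of the two easy conclusions is correct and matches the paper's own argument: $x \subseteq z$ by definition, $y \subseteq z$ via $a = \sigma_{N,N}(a)$, and for $z \cap N \subseteq y$ the observation that an element $c = \sigma_{N,N'}(a)$ which lies in $N$ lies in $N \cap N'$, where the strong isomorphism is the identity, forcing $c = a \in y$. But these are the trivial parts of the lemma. The closure of $(z,C)$, which is the actual content and occupies essentially all of the paper's proof, is only announced in your proposal, not carried out: you name the case decomposition and then declare the ``bookkeeping'' to be the crux. That bookkeeping is the proof, and leaving it unexecuted is a genuine gap, not a presentational shortcut.

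Moreover, the roadmap you give would not assemble into a proof as stated, for two concrete reasons. First, in what you correctly identify as the subtle mixed case --- $P, P' \in A$ with $N < P$ and $d = \sigma_{N,N'}(a)$ with $a \in y$ --- the argument does \emph{not} reduce to closure of $(y,B)$: the element $a$ never moves inside $B$. What is needed is to manufacture new copies of $N$ inside $A$ using the coherence of $A$ as a set (Definition 1.15), which your sketch never invokes: property (2) gives $J \in A$ isomorphic to $P$ with $N' \in J$, property (3) gives $N'' := \sigma_{J,P}(N')$ and $N''' := \sigma_{P,P'}(N'')$ in $A$, strong isomorphism of $J$ and $P$ fixes $d \in N' \cap K \subseteq J \cap P$, and then Lemma 1.10 lets one compute $\sigma_{P,P'}(d) = \sigma_{N,N'''}(a)$, which lies in $z$ by the \emph{definition} of $z$ --- closure of $(y,B)$ plays no role here. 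Without coherence of $A$ you cannot produce $J$, $N''$, $N'''$, and Lemma 1.10 plus ``commutativity'' alone will not do it. Second, you omit the opposite mixed case: $P$ and $P'$ both of the form $\sigma_{N,N^*}(L)$ with $L \in B$, but $d \in x$. There the paper uses closure of $(x,A)$ (applied to the isomorphic pair $N^*, N \in A$) together with the hypothesis $x \cap N \subseteq y$ to replace $d$ by $\sigma_{N^*,N}(d) \in y$, and only then falls into the pure $B$-side case, which in turn needs Lemma 1.13 (agreement of transfer maps on intersections of copies of $N$) and coherence of $B$ in addition to closure of $(y,B)$. Your sketch assigns closure of $(x,A)$ only to the case of models above $N$, so this case is unaccounted for. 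In short: the two conclusions you prove are correct, but the heart of the lemma is missing and the plan for it misidentifies the hypotheses that make each case work.
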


\begin{proof}
Assume that $K$ and $M$ are in $C$ and are isomorphic, 
and $a$ is in $z \cap K$. 
We will show that $\sigma_{K,M}(a) \in z$.

\bigskip

\emph{Case 1:} $N \le K$. 
Then $N \le M$. 
So both $K$ and $M$ are in $A$. 
If $a \in x$, then we are done since $(x,A)$ is closed. 
So assume that $a = \sigma_{N,N'}(a_0)$ 
for some $N'$ in $A$ isomorphic to $N$ and 
$a_0 \in y$.

\bigskip

\emph{Subcase 1a:} $K$ and $M$ are isomorphic to $N$. 
Since $a \in K \cap N'$, $\sigma_{N',K}(a) = a$ by Lemma 1.13. 
By the definition of $z$, $\sigma_{N,M}(a_0) \in z$. 
But $\sigma_{N,M}(a_0) = \sigma_{K,M}(\sigma_{N',K}((\sigma_{N,N'}(a_0))) = 
\sigma_{K,M}(\sigma_{N',K}(a)) = \sigma_{K,M}(a)$. 
So $\sigma_{K,M}(a) \in z$.

\bigskip

\emph{Subcase 1b:} $N < K$. 
Since $A$ is adequate, fix $J \in A$ isomorphic to $K$ such that 
$N' \in J$. 
Let $N'' := \sigma_{J,K}(N')$ and $N''' := \sigma_{K,M}(N'')$. 
Then $N''$ and $N'''$ are in $A$ and are isomorphic to $N$. 
Moreover $\sigma_{N',N''} = \sigma_{J,K} \restriction N'$ and 
$\sigma_{N'',N'''} = \sigma_{K,M} \restriction N''$. 
By definition of $z$, $a_1 := \sigma_{N,N'''}(a_0)$ is in $z$. 
Since $a$ is in $N' \cap K$, $a$ is in $J \cap K$, so $\sigma_{J,K}(a) = a$. 
Hence $a_1 = \sigma_{N,N'''}(a_0) = 
\sigma_{N'',N'''}(\sigma_{N',N''}(\sigma_{N,N'}(a_0))) = 
\sigma_{N'',N'''}(\sigma_{N',N''}(a)) = \sigma_{K,M}(\sigma_{J,K}(a)) = 
\sigma_{K,M}(a)$. 
So $\sigma_{K,M}(a) \in z$.

\bigskip

\emph{Case 2:} $K = \sigma_{N,N^*}(L)$ 
for some $N^* \in A$ isomorphic to $N$ and 
$L \in B$. 
Since $K$ and $M$ are isomorphic, $M \cap \omega_1 = K \cap \omega_1 < 
N \cap \omega_1$. 
So also $M = \sigma_{N,N'}(P)$ for some $N' \in A$ isomorphic to $N$ 
and $P \in B$. 
Then $L \cap \omega_1 = P \cap \omega_1$. 
Since $B$ is coherent adequate, 
$L$ and $P$ are isomorphic.

\bigskip

\emph{Subcase 2a:} 
$a = \sigma_{N,N''}(a_0)$ for some $N'' \in A$ isomorphic to $N$ 
and $a_0 \in y$. 
Since $a \in K = \sigma_{N,N^*}(L)$, 
$a \in N'' \cap N^*$. 
By Lemma 1.13, $\sigma_{N^*,N}(a) = \sigma_{N'',N}(a) = a_0$. 
So $\sigma_{K,L}(a) = \sigma_{N^*,N}(a) = a_0$. 
It follows that $\sigma_{K,M}(a) = 
\sigma_{P,M}(\sigma_{L,P}(\sigma_{K,L}(a))) = 
\sigma_{P,M}(\sigma_{L,P}(a_0)) = 
\sigma_{N,N'}(\sigma_{L,P}(a_0))$. 
Since $(y,B)$ is closed, 
$\sigma_{L,P}(a_0)$ is in $y$. 
So by the 
definition of $z$, $\sigma_{N,N'}(\sigma_{L,P}(a_0))$ is in $z$. 
That is, $\sigma_{K,M}(a)$ is in $z$.

\bigskip

\emph{Subcase 2b:} $a$ is in $x$. 
Since $a \in K = \sigma_{N,N^*}(L)$, it follows that $a \in N^*$. 
As $(x,A)$ is closed, $a_0 := \sigma_{N^*,N}(a)$ 
is in $x \cap N$ and hence in $y$. 
But now $a = \sigma_{N,N^*}(a_0)$, $N^*$ is in $A$, $N^*$ is 
isomorphic to $N$, and $a_0 \in y$. 
So we are done by subcase 2a.

\bigskip

Now let us prove that $x \cup y \subseteq z$ and $z \cap N = y$. 
The set $x$ is a subset of $z$ by definition. 
And if $a \in y$, then $a = \sigma_{N,N}(a)$ is in $z$. 
So $x \cup y \subseteq z$. 
The set $y$ is a subset of $N$ by definition, and we just showed 
that $y$ is a subset of $z$. 
So $y \subseteq z \cap N$. 
On the other hand, suppose that $a \in z \cap N$. 
If $a \in x$, then $a \in x \cap N$ and hence $a \in y$. 
Otherwise $a = \sigma_{N,N'}(a_0)$, where $N' \in A$ 
is isomorphic to $N$ and $a_0 \in y$. 
But then $a \in N' \cap N$, so $\sigma_{N',N}(a) = a$. 
Hence $a = a_0$, so $a \in y$.
\end{proof}

\section{Amalgamating over Countable Models}

In this section we prove an amalgamation result for countable models. 
The difference between this proposition and the analogous result from 
\cite{jk23} is that here we provide an analysis of remainder points.

The next lemma considers a special case of the general result.

\begin{lemma}
Let $N$, $N'$, $N^*$ be isomorphic in $\mathcal X$ such that 
$\{ N, N', N^* \}$ is coherent adequate. 
Assume that $K$ and $L$ are in $N \cap \mathcal X$ and 
$\{ K, L \}$ is adequate. 
Let $M := \sigma_{N,N'}(K)$ and $P := \sigma_{N,N^*}(L)$. 
Then $\{ M, P \}$ is adequate. 
Moreover, if $\zeta$ is in $R_P(M) \cap \beta_{N',N^*}$, then 
$\sigma_{N',N}(\zeta)$ is in $R_L(K)$.
\end{lemma}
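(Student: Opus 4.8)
The plan is to analyze both assertions through a single order-preserving map that pulls $M$ and $P$ back into $N$ below the comparison point of $N'$ and $N^*$. Throughout write $\beta := \beta_{N',N^*}$ and $\sigma := \sigma_{N,N'}$, and recall that since $\{N,N',N^*\}$ is coherent adequate, the three models are pairwise strongly isomorphic. First I would bound the comparison point: since $K$ and $L$ are countable and lie in $N$, we have $M = \sigma[K] \subseteq N'$ and $P = \sigma_{N,N^*}[L] \subseteq N^*$, hence $\Lambda_M \subseteq \Lambda_{N'}$ and $\Lambda_P \subseteq \Lambda_{N^*}$, so $\Lambda_M \cap \Lambda_P \subseteq \Lambda_{N'} \cap \Lambda_{N^*}$ and therefore $\beta_{M,P} \le \beta$.

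Next I would set up the common pullback. Because $N' \sim N^*$ we have $N' \cap \beta = N^* \cap \beta$, and because $N'$ and $N^*$ are strongly isomorphic, Lemma 1.13 shows that $\sigma_{N',N}$ and $\sigma_{N^*,N}$ agree on $N' \cap \beta$; call this common restriction $\tau$, which is order-preserving. For $\eta \in M \cap \beta$ we get $\tau(\eta) = \sigma^{-1}(\eta) \in K$, and for $\eta \in P \cap \beta$ we get $\tau(\eta) = \sigma_{N,N^*}^{-1}(\eta) \in L$, so $\tau$ simultaneously carries $M \cap \beta$ onto an initial segment of $K$ and $P \cap \beta$ onto an initial segment of $L$, preserving order. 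I would also introduce the auxiliary model $L' := \sigma(L) \in N'$: by Lemma 1.11, $L'$ is isomorphic to $L$ and $\sigma(\beta_{K,L}) = \beta_{M,L'}$, and since $\sigma$ and $\sigma_{N,N^*}$ agree below $\beta$, we have $L' \cap \beta = P \cap \beta$, so $M$ is compared against $P$ and against $L'$ using exactly the same data below $\beta$.

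The crux of the lemma is identifying the comparison point, and I expect this to be the main obstacle. The target identity is $\beta_{M,P} = \min(\beta,\,\beta_{M,L'}) = \min(\beta,\,\sigma(\beta_{K,L}))$. Since $\Lambda_M$ and $\Lambda_P = \Lambda_{L'}$ below $\beta$ depend only on $M \cap \beta$ and on $P \cap \beta = L' \cap \beta$, the sets $\Lambda_M \cap \Lambda_P$ and $\Lambda_M \cap \Lambda_{L'}$ have the same elements that are $\le \beta$; from this I would read off that $\beta_{M,P}$ equals $\beta_{M,L'}$ when the latter is $\le \beta$. The delicate boundary case is showing that if $\beta_{M,L'} > \beta$ then in fact $\beta \in \Lambda_M \cap \Lambda_P$, so that $\beta_{M,P} = \beta$; this requires a direct computation with the suprema $\sup(M \cap \beta)$ and $\sup(P \cap \beta)$ together with Lemma 1.2, exploiting that $M$ and $L'$ share their $\Lambda$-gaps past $\beta$.

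Finally I would conclude both assertions by transport along $\tau$. For adequacy, comparing $M \cap \beta_{M,P}$ with $P \cap \beta_{M,P}$ corresponds under the order-isomorphism $\tau$ to comparing $K \cap \beta_{K,L}$ with $L \cap \beta_{K,L}$, and since $\{K,L\}$ is adequate, one of $<$, $\sim$, $>$ holds there, hence the same holds for $\{M,P\}$. For the remainder-point claim, note first that $\zeta \in R_P(M) \cap \beta$ forces $\beta_{M,P} \le \zeta < \beta$, so we are automatically in the regime $\beta_{M,P} = \sigma(\beta_{K,L}) < \beta$ where every relevant ordinal lies below $\beta$ and $\tau$ is an order isomorphism. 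Unwinding Definition 1.8 produces a witness $\gamma \in P$ with $\gamma \ge \beta_{M,P}$ and $\zeta = \min(M \setminus \gamma)$, all below $\beta$; setting $\gamma_0 := \tau(\gamma) \in L$ and $\xi := \tau(\zeta) = \sigma_{N',N}(\zeta) \in K$, the order-isomorphism property of $\tau$ gives $\xi = \min(K \setminus \gamma_0)$, while $\gamma \ge \beta_{M,P}$ together with $\tau(\beta_{M,P}) = \beta_{K,L}$ yields $\gamma_0 \ge \beta_{K,L}$. Thus $\sigma_{N',N}(\zeta)$ satisfies clause (2) of Definition 1.8 for $R_L(K)$; the clause-(1) form of $R_P(M)$ is absorbed into this by taking $\gamma = \beta_{M,P}$, as in the remark following Definition 1.8.
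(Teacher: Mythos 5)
Your plan hinges on the ``target identity'' $\beta_{M,P} = \min(\beta,\,\sigma_{N,N'}(\beta_{K,L}))$, and that identity is false; the ``delicate boundary case'' you defer is exactly where it breaks, and it cannot be repaired. Only the inequality $\sigma_{N',N}(\beta_{M,P}) \le \beta_{K,L}$ (equivalently $\beta_{M,P} \le \min(\beta, \beta_{M,L'})$) is available in general. To see that equality can fail, note that nothing in the hypotheses excludes the following configuration, to which the lemma applies: take $N' := N$ and $L := K$, where $\{N, N^*\}$ is a strongly isomorphic adequate pair with comparison point $\beta := \beta_{N,N^*}$, and $K \in N \cap \mathcal X$ has an ordinal above $\beta$ but $\sup(K \cap \beta)$ lies strictly below some element $\beta_1$ of $\Lambda \cap \beta$ (i.e.\ $\beta \notin \Lambda_K$; the hypothesis $\beta \in \Lambda_N$ constrains $N$, not its elements $K$, and indeed Proposition 5.2 applies Lemma 5.1 to arbitrary $K, L \in B \subseteq N$). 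Then $M = K$, $P = \sigma_{N,N^*}(K)$, and since $\sigma_{N,N^*}$ fixes $N \cap \beta$ pointwise while $M \cap P \cap \omega_2 \subseteq N \cap N^* \cap \omega_2 \subseteq \beta$ by Lemma 1.5, one computes $M \cap \beta = P \cap \beta = K \cap \beta$ and $\Lambda_M \cap \Lambda_P \subseteq \Lambda_N \cap \Lambda_{N^*}$, whence $\beta_{M,P} = \min(\Lambda \setminus \sup(K\cap\beta)) \le \beta_1 < \beta$. But $\beta_{M,L'} = \beta_{K,K} = \max \Lambda_K > \beta$ because $K$ has ordinals above $\beta$, so your identity would force $\beta_{M,P} = \beta$. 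In short: $\beta_{M,P}$ is governed by where $M$ and $P$ actually agree, whose supremum can sit far below $\beta_{N',N^*}$, and ``$M$ and $L'$ sharing their $\Lambda$-gaps past $\beta$'' says nothing about the gaps of $M$ \emph{below} $\beta$.

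This gap is fatal to your treatment of the ``moreover'' clause, because your step ``we are automatically in the regime $\beta_{M,P} = \sigma(\beta_{K,L}) < \beta$'' is exactly an instance of the false direction: $\beta_{M,P} < \beta$ does not imply $\beta_{M,P} = \beta_{M,L'}$ (the configuration above satisfies the former and refutes the latter). The equality $\sigma_{N',N}(\beta_{M,P}) = \beta_{K,L}$ is true \emph{when} a remainder point $\zeta \in R_P(M) \cap \beta_{N',N^*}$ exists, but proving it requires using $\zeta$ itself, and this is the real content of the lemma: assuming $\sigma_{N',N}(\beta_{M,P}) < \beta_{K,L}$, Lemma 1.2 gives $\tau_0 := \min(K \setminus \sigma_{N',N}(\beta_{M,P})) < \beta_{K,L}$, which lies in $K \cap L$ (in the case $K \le L$); then $\tau := \sigma_{N,N'}(\tau_0) \le \zeta < \beta_{N',N^*}$, so by Lemma 1.13 $\tau$ lies in both $M$ and $P$ above $\beta_{M,P}$, contradicting Lemma 1.5. (The paper handles the case $P < M$ by a different route that needs no identification of comparison points: there $\gamma_0 = \sigma(\gamma) \in L \setminus K$ and $L < K$ give $\beta_{K,L} \le \gamma_0$ directly.) Your adequacy argument, by contrast, is salvageable: with only the inequality one compares $K$ and $L$ at the possibly smaller ordinal $\sigma_{N',N}(\beta_{M,P})$, using that an initial segment of a countable member of $L$ is again a member of $L$ -- this is what the paper does -- but as written your ``corresponds to comparing $K \cap \beta_{K,L}$ with $L \cap \beta_{K,L}$'' leans on the false identity. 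Two further points you omit: you must first show $\beta_{M,P} \in N' \cap N^*$ (the paper gets this from Lemma 1.18 via the strong isomorphism of $N'$ and $N^*$) before $\tau$ can legitimately be applied to it or to $M \cap \beta_{M,P}$; and a clause-(1) point of $R_P(M)$ cannot be ``absorbed by taking $\gamma = \beta_{M,P}$,'' since $\beta_{M,P} \in P$ is not available when $M \sim P$ -- one should instead pull clause (1) back to clause (1), which is immediate once the equality of comparison points is in hand.
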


\begin{proof}
Let $\sigma := \sigma_{N',N} \restriction N' \cap N^*$. 
By Lemma 1.13, $\sigma$ is equal to $\sigma_{N^*,N} \restriction N' \cap N^*$.

Note that $\beta_{M,P} \le \beta_{N',N^*}$, since $M \in N'$ and 
$P \in N^*$. 
Therefore $M \cap \beta_{M,P}$ is a countable subset of $\beta_{N',N^*}$. 
So $M \cap \beta_{M,P}$ is in $N' \cap P_{\omega_1}(\beta_{N',N^*})$. 
As $N'$ and $N^*$ are strongly isomorphic, $M \cap \beta_{M,N} \in N^*$ 
by Lemma 1.18. 
For the same reason, $P \cap \beta_{M,P}$ is in $N'$. 
Since $\beta_{M,P}$ is definable from $M \cap \beta_{M,P}$ as 
$\min(\Lambda \setminus \sup(M \cap \beta_{M,P}))$, 
we have that $\beta_{M,P} \in N' \cap N^*$. 
Also $\sigma(M \cap \beta_{M,P}) = 
\sigma_{N',N}(M) \cap \sigma(\beta_{M,P}) = K \cap \sigma(\beta_{M,P})$, 
and similarly $\sigma(P \cap \beta_{M,P}) = L \cap \sigma(\beta_{M,P})$.

\bigskip

We claim that $\sigma(\beta_{M,P}) \le \beta_{K,L}$. 
Since $\beta_{M,P} \in \Lambda_{M}$, $\sigma(\beta_{M,P}) \in 
\sigma_{N',N}(\Lambda_M) = \Lambda_{K}$. 
Similarly, $\sigma(\beta_{M,P}) \in \Lambda_{L}$. 
Since $\beta_{K,L} = \max(\Lambda_K \cap \Lambda_L)$, 
$\sigma(\beta_{M,P}) \le \beta_{K,L}$.

\bigskip

Let us show that $\{ M, P \}$ is adequate. 
We will use the fact that $\{ K, L \}$ is adequate and consider three cases.

\bigskip

\emph{Case 1:} $K < L$. 
Then $K \cap \beta_{K,L} \in L$. 
Since $\sigma(\beta_{M,P}) \le \beta_{K,L}$, 
$K \cap \sigma(\beta_{M,P}) \in L$. 
But $\sigma(M \cap \beta_{M,P}) = K \cap \sigma(\beta_{M,P})$ 
and $\sigma = \sigma_{N^*,N} \restriction N' \cap N^*$. 
Applying $\sigma_{N,N^*}$, we get that 
$\sigma^{-1}(K \cap \sigma(\beta_{M,N})) \in \sigma_{N,N^*}(L)$, that is  
$M \cap \beta_{M,P} \in P$.

\bigskip

\emph{Case 2:} $L < K$. 
This case follows by a symmetric argument.

\bigskip

\emph{Case 3:} $K \sim L$. 
As $\sigma(\beta_{M,P}) \le \beta_{K,L}$, it follows that 
$K \cap \sigma(\beta_{M,P}) = L \cap \sigma(\beta_{M,P})$. 
As noted above, $\sigma(M \cap \beta_{M,P}) = K \cap \sigma(\beta_{M,P})$ and 
$\sigma(P \cap \beta_{M,P}) = L \cap \sigma(\beta_{M,P})$. 
So applying $\sigma^{-1}$, we get that 
$M \cap \beta_{M,P} = P \cap \beta_{M,P}$.

\bigskip

This completes the proof that $\{ M, P \}$ is adequate. 
Now let $\zeta$ be in $R_P(M) \cap \beta_{N',N^*}$. 
We will show that $\zeta_0 := \sigma(\zeta)$ is in $R_L(K)$. 
Since $\zeta \in M$, applying $\sigma_{N',N}$, it follows that $\zeta_0 \in K$. 
Note that since $\beta_{M,P} \le \zeta$, 
$\sigma(\beta_{M,P}) \le \zeta_0$. 
So $\zeta_0 \in K \setminus \sigma(\beta_{M,P})$.

\bigskip

\emph{Case A:} $M \le P$. 
Then $K \le L$. 
We claim that in this case, $\sigma(\beta_{M,P}) = \beta_{K,L}$. 
We already know that $\sigma(\beta_{M,P}) \le \beta_{K,L}$. 
Suppose for a contradiction that $\sigma(\beta_{M,P}) < \beta_{K,L}$. 
By Lemma 1.2, $K \cap 
[\sigma(\beta_{M,P}),\beta_{K,L})$ is nonempty. 
So letting $\tau_0 = \min(K \setminus \sigma(\beta_{M,P}))$, 
$\tau_0 < \beta_{K,L}$. 
Since $\sigma(\zeta) = \zeta_0 \in K \setminus \sigma(\beta_{M,P})$, 
$\tau_0 \le \zeta_0$ by the minimality of $\tau_0$. 
So $\tau := \sigma_{N,N'}(\tau_0) \le \zeta$.
Also since $\tau_0 \in K$, $\tau \in M$. 
And because $\sigma(\beta_{M,P}) \le \tau_0$, 
$\beta_{M,P} \le \tau$. 
So $\tau$ is in $M \setminus \beta_{M,P}$.

Since $K \le L$, $\tau_0 \in K \cap \beta_{K,L}$ implies that $\tau_0 \in L$. 
So $\tau_0 \in K \cap L$. 
As $\tau \le \zeta$ and $\zeta < \beta_{N',N^*}$, 
$\tau < \beta_{N',N^*}$. 
So $\tau = \sigma_{N,N'}(\tau_0) = \sigma_{N,N^*}(\tau_0)$ by Lemma 1.13. 
Since $\tau_0 \in L$, $\tau = \sigma_{N,N^*}(\tau_0) \in P$. 
So $\tau \in (M \cap P \cap \omega_2) \setminus \beta_{M,P}$, 
which is impossible. 
This contradiction shows that $\sigma(\beta_{M,P}) = \beta_{K,L}$.

Now we show that $\zeta_0 \in R_L(K)$. 
If $\zeta = \min(M \setminus \beta_{M,P})$, then applying 
$\sigma_{N',N}$ we get that 
$\zeta_0 = \min(K \setminus \sigma(\beta_{M,P})) = 
\min(K \setminus \beta_{K,L})$. 
So $\zeta_0 \in R_{L}(K)$. 
Now assume that there is $\gamma \in P \setminus \beta_{M,P}$ 
such that $\zeta = \min(M \setminus \gamma)$. 
Since $\zeta < \beta_{N',N^*}$, $\gamma < \beta_{N',N^*}$. 
Let $\gamma_0 := \sigma(\gamma)$. 
Applying $\sigma_{N',N}$, $\zeta_0 = \min(K \setminus \gamma_0)$. 
As $\sigma(\beta_{M,P}) = \beta_{K,L}$ and $\beta_{M,P} \le \gamma$,  
$\beta_{K,L} \le \gamma_0$. 
Also $\gamma \in P$ implies that 
$\gamma_0 = \sigma_{N^*,N}(\gamma) \in 
\sigma_{N^*,N}(P) = L$. 
So $\gamma_0 \in L \setminus \beta_{K,L}$. 
Therefore $\zeta_0 \in R_L(K)$.

\bigskip

\emph{Case 2:} $P < M$. 
Then $L < K$. 
As $\zeta \in R_P(M)$, there is $\gamma \in P \setminus \beta_{M,P}$ 
such that $\zeta = \min(M \setminus \gamma)$. 
Since $\gamma < \zeta < \beta_{N',N^*}$, 
$\gamma \in P \cap \beta_{N',N^*}$. 
Applying $\sigma_{N^*,N}$, $\gamma_0 := \sigma(\gamma)$ is in $\sigma_{N^*,N}(P) = L$. 
Since $\gamma \in P \setminus \beta_{M,P}$, 
$\gamma$ is not in $M$. 
Applying $\sigma_{N',N}$, $\gamma_0$ 
is not in $\sigma_{N',N}(M) = K$. 
So $\gamma_0 \in L \setminus K$. 
Since $L < K$, this implies that $\beta_{K,L} \le \gamma_0$. 
Therefore $\gamma_0 \in L \setminus \beta_{K,L}$. 
Since $\zeta = \min(M \setminus \gamma)$, 
$\zeta_0 = \min(K \setminus \gamma_0)$. 
Hence $\zeta_0 \in R_L(K)$.
\end{proof}

\begin{proposition}
Let $A$ be a coherent adequate set and $N \in A$. 
Suppose that $B$ is a coherent adequate set and 
$A \cap N \subseteq B \subseteq N$. 
Let $C$ be the set 
$$
\{ M \in A : N \le M \} \cup 
\{ \sigma_{N,N'}(K) : N' \in A, \ N \cong N', \ 
K \in B \}.
$$
Then $C$ is a coherent adequate set, $A \cup B \subseteq C$, 
and $C \cap N = B$. 
Moreover, $R_C$ is a subset of 
$$
R_A \cup \{ \min(K \setminus \zeta) : K \in C, \ \zeta \in R_A \} \cup 
\{ \sigma_{N,N'}(\tau) : N' \in A, \ N \cong N', \ \tau \in R_B \}.
$$
\end{proposition}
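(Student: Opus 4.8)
The plan is to decompose $C$ into its two natural layers and to treat the three relative positions of a pair of models separately, postponing the remainder bound to the end. Write $C_1 = \{M \in A : N \le M\}$ and $C_2 = \{\sigma_{N,N'}(K) : N' \in A,\ N \cong N',\ K \in B\}$, so $C = C_1 \cup C_2$. Every $M \in C_1$ has $N \cap \omega_1 \le M \cap \omega_1$, while every element of $C_2$ has $\omega_1$-height $< N \cap \omega_1$ (since $K \in B \subseteq N$ forces $K \cap \omega_1 < N \cap \omega_1$, and $\sigma_{N,N'}$ preserves the $\omega_1$-height); thus the two layers are disjoint and the layer of a member of $C$ is determined by its height. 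For $A \cup B \subseteq C$: if $M \in A$ and $N \le M$ then $M \in C_1$, while if $M < N$ I fix by coherence a copy $N' \cong N$ in $A$ with $M \in N'$, so $\sigma_{N',N}(M) \in A \cap N \subseteq B$ and $M = \sigma_{N,N'}(\sigma_{N',N}(M)) \in C_2$; each $K \in B$ equals $\sigma_{N,N}(K) \in C_2$. The equality $C \cap N = B$ is then proved exactly as in the last paragraph of Lemma 4.2, using that an element of $N \cap C_2$ lies in $N \cap N'$ and is therefore fixed by the strong isomorphism $\sigma_{N',N}$.

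For coherent adequacy I would split on the layers of the two models. When both lie in $C_1$ all three requirements are inherited from $A$. When both lie in $C_2$, say $\sigma_{N,N'}(K_0)$ and $\sigma_{N,N^*}(L_0)$ with $K_0, L_0 \in B$: adequacy is precisely Lemma 5.1; requirement (1) follows by descending $\sim$ to $K_0 \sim L_0$ in $B$, applying coherence of $B$, and then lifting by Lemma 1.14; requirement (3) reduces to coherence of $B$ through the identity $\sigma_{K,M} = \sigma_{M_0,M} \circ \sigma_{K_0,M_0} \circ \sigma_{K,K_0}$, where $\sigma_{K,K_0} = \sigma_{N',N} \restriction K$ and $\sigma_{M_0,M} = \sigma_{N,N^*} \restriction M_0$ by Lemma 1.10, so the image is again $\sigma_{N,N^*}(\sigma_{K_0,M_0}(L_0)) \in C_2$. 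In the mixed case a lower model $P = \sigma_{N,N'}(K_0)$ satisfies $P \subseteq N'$, whence $\beta_{P,Q} \le \beta_{N',Q}$ for an upper $Q$; since $N' \cong N \le Q$, the initial-segment argument of Lemma 5.1 together with Lemma 1.18 (passing to a copy of $Q$ containing $N'$ when $N' < Q$) shows $P \cap \beta_{P,Q} \in Q$, so $P < Q$ and the pair is adequate. Here requirements (1) and (3) are vacuous (the two models have different heights), and requirement (2) is met by the coherence witness for $\{N',Q\}$ in $A$, or by $N'$ itself when $N' \cong Q$.

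The substantive new part is the remainder bound, and here I would write $R_C = \bigcup\{R_Q(P) : P, Q \in C\}$ and classify each $\zeta \in R_Q(P)$ by the layers of $P$ and $Q$. If both are upper, $\zeta \in R_A$. If both are lower, with copies $N'$ and $N^*$, I split at the ambient comparison point $\beta_{N',N^*}$: for $\zeta < \beta_{N',N^*}$, Lemma 5.1 gives $\sigma_{N',N}(\zeta) \in R_B$, so $\zeta = \sigma_{N,N'}(\sigma_{N',N}(\zeta))$ lands in the third set; for $\zeta \ge \beta_{N',N^*}$, I set $\eta := \min(N' \setminus \beta_{N',N^*})$, which lies in $R_{N^*}(N') \subseteq R_A$ because $N' \cong N^*$, and verify $\zeta = \min(P \setminus \eta)$, placing $\zeta$ in the second set. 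In the mixed case, writing the lower model as $P \subseteq N'$ and the upper as $Q$: every $\zeta \in R_Q(P)$ satisfies $\zeta \ge \beta_{N',Q}$, since otherwise $\zeta \in N' \cap \beta_{N',Q} \subseteq Q$ would put $\zeta$ in $(P \cap Q \cap \omega_2) \setminus \beta_{P,Q}$, contradicting Lemma 1.5; one then has $\zeta = \min(P \setminus \eta)$ for a suitable $\eta \in R_Q(N') \subseteq R_A$, giving the second set. Dually, each $\zeta \in R_P(Q)$ is generated by some $\gamma \in P \subseteq N'$ which Lemma 1.5 forces into $N' \setminus \beta_{N',Q}$, so that $\zeta \in R_{N'}(Q) \subseteq R_A$, the first set.

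The main obstacle is precisely this remainder bookkeeping. The delicate points are those generated by an ordinal $\gamma$ lying strictly between the pair comparison point $\beta_{P,Q}$ and the ambient model comparison point (either $\beta_{N',N^*}$ or $\beta_{N',Q}$): these are exactly the remainder points to which Lemma 5.1 does not directly apply, and the argument that they may nonetheless be rewritten as $\min(K \setminus \zeta)$ with $\zeta \in R_A$ rests on Lemma 1.5, which rules out common ordinals of $P$ and $Q$ beyond $\beta_{P,Q}$ and thereby forces the remainder point above the ambient comparison point. The second source of care is matching the variables of Lemma 5.1 correctly, identifying which copy plays the role of $N'$ and which of $N^*$, so that the conclusion $\sigma_{N',N}(\zeta) \in R_B$ is applied in the correct direction.
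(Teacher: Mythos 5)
Your skeleton (two layers, Lemma 5.1 for adequacy of two lower models, the Lemma 1.18/1.19 initial-segment argument in the mixed case, splitting lower--lower remainder points at $\beta_{N',N^*}$) matches the paper's proof, but two steps as written are wrong, not merely terse.

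First, coherence requirement (3) is \emph{not} ``inherited from $A$'' when the two isomorphic models lie in $C_1$. If $M \cong M_1$ are in $C_1$ and $K \in M \cap C$, the third model $K$ may well lie in $C_2 \setminus A$: take $K = \sigma_{N,N''}(K_1)$ with $K_1 \in B \setminus A$, $N'' \cong N$ in $A$, and $K \in M$ (for instance $M = N''$ itself, $M_1$ another copy of $N$). Coherence of $A$ says nothing about $\sigma_{M,M_1}(K)$, since $K \notin A$. Showing that $\sigma_{M,M_1}(K)$ lands back in $C_2$ is precisely Case 1 (subcases 1a and 1b) of the paper's Lemma 4.2: when $N < M$ one must use coherence of $A$ to produce $J \cong M$ with $N'' \in J$, transport $N''$ through $\sigma_{J,M}$ and $\sigma_{M,M_1}$ to another copy of $N$ in $A$, and chase the compositions via Lemmas 1.10 and 1.13. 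The paper gets all of this in one stroke by applying Lemma 4.2 with $x := A$, $y := B$, so that ``$(C,C)$ is closed'' \emph{is} requirement (3); your proposal needs either that citation or the argument itself. Relatedly, you never verify requirement (2) when both models are in $C_2$: if $M = \sigma_{N,N'}(K_0) < P = \sigma_{N,N^*}(L_0)$, the needed witness is obtained by descending to $K_0 < L_0$, taking $L' \in B$ with $K_0 \in L' \cong L_0$ by coherence of $B$, and lifting to $\sigma_{N,N'}(L') \in C_2$, which is isomorphic to $P$ by Lemma 1.14; your mixed-case witness ($N'$ or a copy of $Q$ containing $N'$) does not cover this configuration.

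Second, in the lower--lower remainder analysis your single ordinal $\eta := \min(N' \setminus \beta_{N',N^*})$ does not handle all $\zeta \in R_Q(P)$ with $\zeta \ge \beta_{N',N^*}$. If $\zeta = \min(P \setminus \gamma)$ for a generator $\gamma \in Q \setminus \beta_{P,Q}$ with $\gamma \ge \beta_{N',N^*}$, then $P$ can have elements in $[\eta,\gamma)$, in which case $\min(P \setminus \eta) < \gamma \le \zeta$ and your identity $\zeta = \min(P \setminus \eta)$ fails: picture the elements of $P$ and $Q$ above $\beta_{N',N^*}$ interleaved as $p_1 < q_1 < p_2$, so that $\zeta = p_2$ is generated by $q_1$ while $\min(P \setminus \eta) = p_1$. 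The paper treats this subcase with an ordinal depending on the generator: since $\gamma \in N^* \setminus \beta_{N',N^*}$, the point $\pi := \min(N' \setminus \gamma)$ lies in $R_{N^*}(N') \subseteq R_A$, and $\zeta = \min(P \setminus \pi)$ because $P \subseteq N'$ has no elements in $[\gamma,\pi)$. Your fixed $\eta$ works only for generators below $\beta_{N',N^*}$ (and for $\zeta = \min(P \setminus \beta_{P,Q})$), which is what your ``delicate points'' paragraph describes; the generators at or above $\beta_{N',N^*}$ are the ones your argument actually misses. The same ``suitable $\eta$'' in your mixed case should be made explicit in the same generator-dependent way, as the paper does with $\tau := \min(N' \setminus \beta_{N',P})$ versus $\pi := \min(N' \setminus \gamma)$.
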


\begin{proof}
Let $M$ and $P$ be in $C$ such that $M \cap \omega_1 \le P \cap \omega_1$. 
We will show that $\{ M, P \}$ is adequate and that the remainder points of $M$ and $P$ 
are as required.

\bigskip

\emph{Case 1:} $N \cap \omega_1 \le M \cap \omega_1$.  
Then $N \cap \omega_1 \le M \cap \omega_1 \le P \cap \omega_1$. 
Hence $M$ and $P$ are both in $A$. 
So obviously $\{ M, P \}$ is adequate and 
$R_M(P) \cup R_P(M) \subseteq R_A$.

\bigskip

\emph{Case 2:} $M \cap \omega_1 < N \cap \omega_1 \le P \cap \omega_1$. 
Then clearly $P \in A$, $N \le P$, and 
$M = \sigma_{N,N'}(K)$ for some $N' \in A$ isomorphic to 
$N$ and $K \in B$. 
Since $N \le P$ and $N$ and $N'$ are isomorphic, $N' \le P$. 
And as $M \in N'$, $\beta_{M,P} \le \beta_{N',P}$. 
So $M \cap \beta_{M,P}$ is in $N' \cap P_{\omega_1}(\beta_{N',P})$. 
As $N' \le P$, $M \cap \beta_{M,P} \in P$ by Lemma 1.19. 
So $M < P$.

Let $\zeta \in R_M(P)$ be given, and we will show that $\zeta \in R_A$. 
Since $M < P$, there is $\gamma \in M \setminus \beta_{M,P}$ such that 
$\zeta = \min(P \setminus \gamma)$. 
As $\beta_{M,P} \le \gamma$, $\gamma$ is not in $P$. 
Since $N' \le P$ and $\gamma \in N' \setminus P$, 
$\beta_{N',P} \le \gamma$. 
So $\gamma \in N' \setminus \beta_{N',P}$. 
Therefore $\zeta$ is in $R_{N'}(P)$ and hence in $R_A$.

Now let $\zeta \in R_P(M)$ be given. 
Since $M < P$, there is $\gamma \in P \setminus \beta_{M,P}$ such that 
$\zeta = \min(M \setminus \gamma)$. 
As $\zeta$ is in $N' \setminus P$ and $N' \le P$, 
$\beta_{N',P} \le \zeta$. 
If $\gamma < \beta_{N',P}$, then $\zeta = \min(M \setminus \beta_{N',P})$. 
Let $\tau := \min(N' \setminus \beta_{N',P})$. 
Then $\tau \in R_A$. 
Since $M \subseteq N'$, $\zeta = \min(M \setminus \tau)$. 
Thus $\zeta$ is as required. 
Now assume that $\beta_{N',P} \le \gamma$. 
Then $\gamma$ is in $P \setminus \beta_{N',P}$. 
Hence $\pi := \min(N' \setminus \gamma)$ is in $R_A$. 
But then $\zeta = \min(M \setminus \pi)$, so $\zeta$ is as required.

\bigskip

\emph{Case 3:} $M \cap \omega_1 \le P \cap \omega_1 < N \cap \omega_1$. 
Then $M = \sigma_{N,N'}(K)$ and $P = \sigma_{N,N^*}(L)$ for some 
$N'$ and $N^*$ in $A$ isomorphic to $N$ and 
$K$ and $L$ in $B$. 
By Lemma 5.1, $\{ M, P \}$ is adequate. 
Moreover, if $\zeta$ is in $R_P(M) \cap \beta_{N',N^*}$ then 
$\sigma_{N',N}(\zeta) \in R_B$, and if $\zeta$ is in $R_M(P) \cap \beta_{N',N^*}$ 
then $\sigma_{N^*,N}(\zeta) \in R_B$. 
In either case, $\zeta$ is as required.

Let $\zeta \in R_P(M) \setminus \beta_{N',N^*}$ be given. 
Since $M \in N'$ and $P \in N^*$, $\beta_{M,P} \le \beta_{N',N^*}$. 
If $\zeta = \min(M \setminus \beta_{M,P})$ or if 
$\zeta = \min(M \setminus \gamma)$ 
for some $\gamma \in P \setminus \beta_{M,P}$ 
which is below $\beta_{N',N^*}$, then 
$\zeta = \min(M \setminus \beta_{N',N^*})$. 
So letting $\tau := \min(N' \setminus \beta_{N',N^*})$, $\tau \in R_A$ 
and clearly $\zeta = \min(M \setminus \tau)$. 
So $\zeta$ is as required. 
Otherwise there is $\gamma \in P \setminus \beta_{M,P}$ such that 
$\beta_{N',N^*} \le \gamma$ and $\zeta = \min(M \setminus \gamma)$. 
Since $\gamma \in P$ and $P \in N^*$, $\gamma \in N^* \setminus \beta_{N',N^*}$. 
Let $\pi := \min(N' \setminus \gamma)$. 
Then $\pi \in R_A$ and clearly $\zeta = \min(M \setminus \pi)$, and we are done. 
The proof that the remainder points in $R_M(P)$ are as required follows 
by a symmetric argument, since we never used in this paragraph 
the assumption that 
$M \cap \omega_1 \le P \cap \omega_1$.

\bigskip

Now we show that $A \cup B \subseteq C$. 
If $K \in B$, then $K = \sigma_{N,N}(K)$ is in $C$. 
Let $M \in A$. 
If $N \cap \omega_1 \le M \cap \omega_1$, then $M \in C$. 
Otherwise $M \cap \omega_1 < N \cap \omega_1$. 
Since $A$ is coherent, there exists $N' \in A$ 
isomorphic to $N$ such that $M \in N'$. 
Let $K := \sigma_{N',N}(M)$. 
Then $K$ is in $A \cap N$ and hence in $B$. 
So $\sigma_{N,N'}(K) = M$ is in $C$.

\bigskip

It remains to prove that $C$ is coherent and $C \cap N = B$. 
We apply Lemma 4.2 in the case where $x = A$ and $y = B$. 
Then clearly the set $z$ defined there is equal to $C$. 
Since $A$ and $B$ are coherent, the pairs $(A,A)$ and $(B,B)$ are closed. 
So by Lemma 4.2, $(C,C)$ is closed and $C \cap N = B$. 
Therefore if $M$ and $M'$ are in $C$ and are isomorphic, and $K \in M \cap C$, 
then $\sigma_{M,M'}(K) \in C$.

We prove the remaining properties in the definition of coherence. 
Suppose that $M$ and $P$ are in $C$ and 
$M \cap \omega_1 = P \cap \omega_1$. 
If $N \le M$, then $M$ and $P$ are both in $A$ and hence are 
strongly isomorphic. 
Otherwise $M < N$ and $P < N$, which implies that 
$M = \sigma_{N,N'}(K)$ and $P = \sigma_{N,N^*}(L)$ for some 
$N'$ and $N^*$ in $A$ isomorphic to $N$ and 
$K$ and $L$ in $B$. 
Then $K \cap \omega_1 = M \cap \omega_1 = P \cap \omega_1 = L \cap \omega_1$. 
Since $B$ is coherent, $K$ and $L$ are strongly isomorphic. 
By Lemma 1.14, $M$ and $P$ are strongly isomorphic.

Now assume that $M$ and $P$ are in $C$ and $M < P$. 
We will show that there is $P'$ in $C$ isomorphic to $P$ 
such that $M \in P'$. 
If $N \cap \omega_1 \le M \cap \omega_1$, then $M$ and $P$ 
are both in $A$, and we are done since $A$ is coherent. 
Suppose that $M \cap \omega_1 < N \cap \omega_1 \le P \cap \omega_1$. 
Then $M = \sigma_{N,N'}(K)$ for some $N' \in A$ 
isomorphic to $N$ and $K \in B$. 
If $N'$ is isomorphic to $P$ then we are done. 
Otherwise $N' < P$, so there is $P' \in A$ isomorphic to $P$ with $N' \in P'$. 
Then $M \in P'$.

Finally, assume that $M = \sigma_{N,N'}(K)$ and 
$P = \sigma_{N,N^*}(L)$ for some $K$ and $L$ in $B$ 
and $N'$ and $N^*$ in $A$ isomorphic to $N$. 
Since $M < P$, $K < L$. 
As $B$ is coherent, fix $L'$ in $B$ isomorphic to $L$ such that $K \in L'$. 
Then $M = \sigma_{N,N'}(K) \in \sigma_{N,N'}(L')$ and 
$\sigma_{N,N'}(L') \in C$. 
Since $L$ and $L'$ are strongly isomorphic in $N$, 
$\sigma_{N,N'}(L')$ and $\sigma_{N,N^*}(L) = P$ are strongly isomorphic 
by Lemma 1.14.
\end{proof}

\section{Adding a square sequence}

We review the forcing poset from \cite{jk23} for adding a square sequence with 
finite conditions, and show that it is in the class of coherent adequate type 
forcing posets. 
As a consequence, this forcing poset preserves $CH$.

By a \emph{triple} we mean a sequence 
$\langle \alpha, \gamma, \beta \rangle$, where $\alpha \in \Lambda$ and 
$\gamma < \beta < \alpha$. 
Given distinct triples $\langle \alpha, \gamma, \beta \rangle$ and 
$\langle \alpha', \gamma', \beta' \rangle$, we say that the triples are 
\emph{nonoverlapping} if either $\alpha \ne \alpha'$, 
or $\alpha = \alpha'$ and $[\gamma,\beta) \cap [\gamma',\beta') = \emptyset$; 
otherwise they are \emph{overlapping}. 

\begin{definition}
Let $\p$ be the forcing poset whose conditions are pairs 
$( x, A )$ satisfying:
\begin{enumerate}
\item $x$ is a finite set of nonoverlapping triples;
\item $A$ is a finite coherent adequate set;
\item for all $M \in A$ and $\langle \alpha, \gamma, \beta \rangle \in x$ 
such that $\alpha \in M$, either $\gamma$ and $\beta$ are in $M$ 
or $\sup(M \cap \alpha) < \gamma$;
\item if $M$ and $M'$ are isomorphic sets in $A$, 
then for any triple $\langle \alpha, \gamma, \beta \rangle \in M \cap x$, 
$\sigma_{M,M'}(\langle \alpha, \gamma, \beta \rangle) \in x$.
\end{enumerate}
Let $(y,B) \le (x,A)$ if $x \subseteq y$ and $A \subseteq B$.
\end{definition}

We proved in \cite{jk23} that $\p$ is strongly proper, $\omega_2$-c.c., 
and forces $\Box_{\omega_1}$.

\begin{proposition}
The forcing poset $\p$ is a coherent adequate type forcing poset.
\end{proposition}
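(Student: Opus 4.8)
The plan is to show that $\p$ witnesses the definition of a coherent adequate type forcing with $S = \omega_2$, $\mathcal Y = \mathcal X$, and $m = 0$, so that a condition is a pair $(x,A)$ with the single finite component $x$. Clauses (I) and (II) are immediate: the elements of $x$ are triples, hence members of $H(\lambda)$, and because $R_A \subseteq \omega_2 = S$ always holds, every finite coherent adequate $A$ is $(S)$ coherent adequate, with $\mathcal X$ closed under isomorphisms by Lemma 1.10. Clause (V), that each $(x,A)$ is strongly $M$-generic for every $M \in A$, is exactly what the strong properness argument for $\p$ establishes in \cite{jk23}, so I would simply invoke it. The substance of the proof lies in the amalgamation clauses (III) and (IV).

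For (III), suppose $(y,B) \le (x,A)$, that $N$ and $N'$ are isomorphic members of $B$, and $(x,A) \in N$. As $x$ and $A$ are finite elements of $N$ they are subsets of $N$, so $\sigma_{N,N'}((x,A))$ is the pair $(\sigma_{N,N'}[x], \sigma_{N,N'}[A])$. I would first verify this pair is a condition: being a triple, being nonoverlapping, being coherent adequate, and the defining clauses of $\p$ are all preserved under the isomorphism $\sigma_{N,N'}$, using Lemma 1.11 for the comparison points and adequacy relations. That $(y,B)$ extends it then splits in two: $\sigma_{N,N'}[A] \subseteq B$ because each $K \in A \subseteq N \cap B$ has $\sigma_{N,N'}(K) \in B$ by clause (3) in the definition of coherence applied to $B$, and $\sigma_{N,N'}[x] \subseteq y$ because each triple of $x$ lies in $N \cap y$ and $(y,B)$ satisfies clause (4) in the definition of $\p$.

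For (IV), given isomorphic $M_0,\dots,M_n$ forming a coherent adequate set with $(x,A) \in \bigcap_{i} M_i$, I would set $B := A \cup \{M_0,\dots,M_n\}$. Since isomorphic members of a coherent adequate set are strongly isomorphic and $A \in \bigcap_i M_i$, Lemma 1.17 yields that $B$ is coherent adequate. To secure clause (4) of the definition of $\p$ I would let $y$ be the closure of $x$ under the isomorphisms of $B$ given by Lemma 4.1, so that $(y,B)$ is closed; finiteness of $y$ is clear since $B$ is finite and the closure is achieved in one step. Because each $M_i$ contains $(x,A)$ and hence all of $x$, the only genuinely new triples are of the form $\sigma_{M_i,M'}(t)$ with $M' \in A$, $M' \cong M_i$, and $t \in x \setminus M'$; all other closure images either lie already in $x$ or are fixed.

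What remains, and what I expect to be the main obstacle, is verifying the square-specific clauses (1) and (3) for $(y,B)$. For the original triples and any $M_i$, clause (3) comes for free since $x \subseteq M_i$ forces $\gamma,\beta \in M_i$; for a new triple $\langle\alpha',\gamma',\beta'\rangle = \sigma_{M_i,M'}(\langle\alpha,\gamma,\beta\rangle)$ and an arbitrary $P \in B$ with $\alpha' \in P$, I would transfer clause (3) across $\sigma_{M_i,M'}$, distinguishing whether $P$ lies in $A$ or is one of the $M_j$, in the style of the case analysis of Lemma 5.1 and using that images under $\sigma_{M_i,M'}$ land in $M'$. The nonoverlapping requirement is handled in the same spirit: an isomorphism preserves the top coordinate and the interval of a triple, so any overlap between two triples of $y$ reduces to an overlap between their preimages under a common isomorphism, which is impossible because $(x,A)$ is already a condition. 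Carrying out these two verifications, with the attendant bookkeeping on comparison points, is the delicate part of the argument.
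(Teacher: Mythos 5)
Your handling of clauses (I), (II), (III), and (V) is essentially identical to the paper's proof: (V) is imported from the strong properness argument of \cite{jk23}, and (III) is verified the same way (the defining properties of $\p$ are preserved by $\sigma_{N,N'}$ via elementarity, $\sigma_{N,N'}[A] \subseteq B$ by coherence clause (3) for $B$, and $\sigma_{N,N'}[x] \subseteq y$ by clause (4) of Definition 6.1 applied to $(y,B)$).

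The problem is clause (IV). Your plan rests on the existence of ``genuinely new triples'' of the form $\sigma_{M_i,M'}(t)$ with $M' \in A$ and $M' \cong M_i$, and you explicitly defer the verification of clauses (1) and (3) for such triples as ``the delicate part.'' But no such $M'$ exists: since $(x,A) \in M_i$ and $A$ is finite, $A \subseteq M_i$, so every $K \in A$ satisfies $K \cap \omega_1 < M_i \cap \omega_1$; on the other hand, isomorphic models have the same intersection with $\omega_1$ (as observed after Definition 1.15). Hence no member of $A$ is isomorphic to any $M_j$, so the only isomorphic pairs in $B = A \cup \{M_0,\ldots,M_n\}$ beyond those already in $A$ are pairs $M_i, M_j$, and these are strongly isomorphic and fix $x$ pointwise because $x \subseteq M_i \cap M_j$. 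Consequently your closure set $y$ is just $x$, there are no new triples whatsoever, and the case analysis you postpone is vacuous. This observation is exactly what makes the paper's proof of (IV) three lines long: $(x, A \cup \{M_0,\ldots,M_n\})$ is already a condition --- (2) holds by Lemma 1.17, (3) holds because each triple of $x$ lies in every $M_i$ together with all its coordinates, and (4) holds because $\sigma_{M_i,M_j}$ is the identity on $x \cap M_i = x$. As written, your proof is incomplete precisely at the step you identify as the main obstacle; adding the observation above closes it and collapses your argument to the paper's.
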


\begin{proof}
Let $(x,A)$ be condition in $\p$ and we will 
verify requirements (I)--(V) in the 
definition of a coherent adequate type forcing. 
The proof of Proposition 4.3 in \cite{jk23} shows that if $(x,A)$ is a 
condition and $M \in A$, then $(x,A)$ is strongly $M$-generic. 
So requirement (V) holds. 
Obviously $x$ is a subset of $H(\omega_2)$, and by assumption $A$ is a 
coherent adequate set. 
Thus requirements (I) and (II) are immediate.

(III) Suppose that $(y,B) \le (x,A)$, $N$ and $N'$ are isomorphic sets in $B$, 
and $(x,A) \in N$. 
Properties (1)--(4) in the definition of $\p$ are all first order 
definable in $H(\omega_2)$, except the part of (2) which asserts membership 
in $\mathcal X$. 
Since $\sigma_{N,N'}$ is an isomorphism which preserves membership 
in $\mathcal X$ and $N$ and $N'$ are 
elementary substructures of $H(\omega_2)$, 
$\sigma_{M,N}((x,A))$ is in $\p$. 
Also $\sigma_{M,N}((x,A)) = (\sigma_{M,N}[x],\sigma_{M,N}[A])$. 
Since $(y,B)$ is closed and $B$ is coherent, $x \subseteq y$ and 
$A \subseteq B$ imply that 
$\sigma_{M,N}[x] \subseteq y$ and $\sigma_{M,N}[A] \subseteq B$. 
Consequently, $(y,B) \le \sigma_{M,N}((x,a))$.

(IV) Suppose that $M_0, \ldots, M_n$ 
are isomorphic sets in $\mathcal X$ such that 
$\{ M_0, \ldots, M_n \}$ is coherent adequate and 
$(x,A) \in M_0 \cap \cdots \cap M_n$. 
We claim that $(x,A \cup \{ M_0, \ldots, M_n \})$ satisfies properties 
(1)--(4) in the definition of $\p$. 
By Lemma 1.17, $A \cup \{ M_0, \ldots, M_n \}$ 
is coherent adequate so (2) is satisfied. 
(1) is immediate. 
For (3), any triple in $x$ is a member of 
$M_0, \ldots, M_n$, so there is nothing to check. 
And for (4), if $i, j \le n$ and $a \in x \cap M_i$, then 
$a \in M_i \cap M_j$ so $\sigma_{M_i,M_j}(a) = a$ is in $x$.
\end{proof}

\section{Adding a club preserving CH}

A forcing poset for adding a club to a fat subset of $\omega_2$ using 
adequate sets as side conditions appears in \cite{jk24}. 
Friedman \cite{friedman} asked whether it is possible to add a club to 
$\omega_2$ with finite conditions while preserving CH. 
In this section we adapt of variation of the poset from \cite{jk24} to 
solve this problem.

Fix a stationary set $S \subseteq \omega_2$ which is fat. 
So for every club set $C \subseteq \omega_2$, 
$S \cap C$ contains a closed subset with order type $\omega_1 + 1$. 
Without loss of generality we may assume that 
$S \cap \cof(\omega_1)$ is stationary and for all 
$\alpha \in S \cap \cof(\omega_1)$, $S \cap \alpha$ 
contains a closed cofinal subset of $\alpha$. 
For the assumption of $S$ being fat implies that there is a stationary subset 
of $S$ satisfying this property.

The general framework of coherent adequate sets introduced in the 
first section involves the parameters 
$\lambda$ and $Y$. 
For this application we let $\lambda := \omega_2$ and let $Y$ code $S$ together with 
a well-ordering of $H(\omega_2)$. 
So $\mathcal X$ consists of countable elementary substructures 
of $H(\omega_2)$, and isomorphisms between members of 
$\mathcal X$ preserve membership in $S$.

Let $\mathcal Y$ denote the set of $M$ in $\mathcal X$ such that 
for all $\alpha \in (M \cap S) \cup \{ \omega_2 \}$, 
$\sup(M \cap \alpha) \in S$. 
A straightforward argument using the properties of $S$ shows that 
$\mathcal Y$ is stationary in $P_{\omega_1}(H(\omega_2))$. 
Also since isomorphisms between members of $\mathcal X$ preserve 
membership in $S$, it is easy to check that $\mathcal Y$ is closed 
under isomorphisms.

For an ordinal $\alpha$ and a set $N$ 
such that $N \cap \omega_2$ is not a subset of $\alpha$, 
let $\alpha_N$ denote $\min(N \setminus \alpha)$. 
Given pairs of ordinals $\langle \alpha, \alpha' \rangle$ and 
$\langle \gamma, \gamma' \rangle$, we say that the pairs \emph{overlap} if 
$\alpha < \gamma \le \alpha'$ or $\gamma < \alpha \le \gamma'$; 
otherwise they are \emph{nonoverlapping}.

\begin{definition}
Let $\p$ be the forcing poset consisting of conditions of the form 
$p = (x_p, A_p)$ satisfying:
\begin{enumerate}
\item $x_p$ is a finite set of nonoverlapping pairs of the form 
$\langle \alpha, \alpha' \rangle$, where 
$\alpha \le \alpha' < \omega_2$ and $\alpha \in S$;
\item $A_p$ is a finite coherent adequate subset of $\mathcal Y$;
\item if $\langle \alpha, \alpha' \rangle \in x_p$, $N \in A_p$, and 
$N \cap \omega_2 \nsubseteq \alpha$, then $N \cap [\alpha,\alpha'] \ne \emptyset$ 
implies that $\alpha$ and $\alpha'$ are in $N$, and 
$N \cap [\alpha,\alpha'] = \emptyset$ implies that 
$\langle \alpha_N, \alpha_N \rangle \in x_p$;
\item for all $\zeta$ in $R_{A_p}$, $\langle \zeta, \zeta \rangle \in x$;
\item if $M$ and $N$ are in $A_p$ and are isomorphic, then for any 
$a \in M \cap x_p$, $\sigma_{M,N}(a) \in x_p$.
\end{enumerate}
Let $q \le p$ if $x_p \subseteq x_q$ and $A_p \subseteq A_q$.
\end{definition}

Observe that if $(x,A)$ is a condition and $N \in A$, then 
$(x \cap N, A \cap N)$ is also a condition. 
Also, if $(x,A)$ is in $\p$ and $B$ is a subset of $A$ which 
is coherent adequate, 
then $(x,B)$ is a condition.

Note that in requirement (3), if the pair is of the form $\langle \alpha, \alpha \rangle$, then 
the conclusion in either case is equivalent to requiring that $\langle \alpha_N, \alpha_N \rangle \in x$.

Let $\dot C_S$ be a $\p$-name such that $\p$ forces
$$
\dot C_S  = \{ \alpha : \exists p \in \dot G \ \exists \alpha' \ 
\langle \alpha, \alpha' \rangle \in x_p \}.
$$
Clearly $\dot C_S$ is forced to be a subset of $S$. 
We will show that $\p$ is an $(S,\mathcal Y)$ coherent adequate type forcing and $\p$ forces 
that $\dot C_S$ is club in $\omega_2$.

\begin{proposition}
Let $q = (x,A)$ be a condition and assume that $N \in A$. 
Then $q$ is strongly $N$-generic.
\end{proposition}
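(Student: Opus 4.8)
The plan is to show that $q = (x,A)$ is strongly $N$-generic by exhibiting, for any condition $r \le q$, an amalgamation of $r$ over $N$: that is, given $r \le q$ I want to find a single condition $w \le r$ such that $w$ is below every member of a dense subset of $N \cap \p$ that $r$ forces into the generic filter, or more directly, I want to show that whenever $D \subseteq N \cap \p$ is dense in $N \cap \p$, then $D$ is predense below $q$. The standard way to do this is to fix $r \le q$ and produce $r_0 := (x_r \cap N, A_r \cap N) \in N \cap \p$ (which is a condition by the observation following Definition~7.1), then pick $s \le r_0$ in $D$ using density inside $N$, and finally amalgamate $s$ with $r$ over $N$ to get a common extension. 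So the real content is an amalgamation lemma: given $r \le q$ with $N \in A_r$ and given $s \le (x_r \cap N, A_r \cap N)$ with $s \in N$, there is $w \le r, s$.

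First I would build the side-condition part of $w$. Here is where Proposition~5.2 does the heavy lifting: I apply it with $A := A_r$, the distinguished model $N$, and $B := A_s$ (after checking $A_r \cap N \subseteq A_s \subseteq N$, which holds because $s$ extends $(x_r \cap N, A_r \cap N)$ and $s \in N$). This yields a coherent adequate set $C \supseteq A_r \cup A_s$ with $C \cap N = A_s$, together with the crucial description of $R_C$ as contained in $R_{A_r}$, the points $\min(K \setminus \zeta)$ for $K \in C$ and $\zeta \in R_{A_r}$, and the images $\sigma_{N,N'}(\tau)$ for $\tau \in R_{A_s}$. I then need $C \subseteq \mathcal Y$, which should follow since $\mathcal Y$ is closed under isomorphisms and $C$ is generated from $A_r \cup A_s \subseteq \mathcal Y$ by such isomorphisms.

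Next I would build the working part $x_w$. The natural candidate is the closure of $x_r \cup x_s$ under the isomorphisms present in $C$, using Lemma~4.2 with $x := x_r$, $y := x_s$ to produce a set $z$ with $x_r \cup x_s \subseteq z$, $z \cap N = x_s$, and $(z, C)$ closed, giving requirement (5) of Definition~7.1. The genuinely delicate verification will be requirements (3) and (4): I must check that after closing up, all the pairs in $z$ are still nonoverlapping (requirement (1)), that every remainder point in $R_C$ has its diagonal pair $\langle \zeta, \zeta\rangle$ in $z$ (requirement (4)), and that the interaction condition (3) relating pairs to models in $C$ survives. This is exactly why Proposition~5.2 was stated with its explicit $R_C$ description: the new remainder points are either old ones (already handled by $r$ or $s$), of the form $\min(K \setminus \zeta)$ which I must arrange to add as diagonal pairs, or isomorphic images of remainder points of $s$ which closure under $C$ should supply. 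I expect the main obstacle to be verifying the nonoverlapping condition together with requirement~(3) simultaneously — ensuring that when I throw in a diagonal pair $\langle \alpha_N, \alpha_N\rangle$ forced by condition~(3), or a diagonal pair at a new remainder point, it does not overlap an existing pair and does not violate (3) against some other model of $C$. Handling this will likely require a careful case analysis organized by the comparison between $N \cap \omega_1$ and the $\omega_1$-heights of the relevant models, mirroring the three-case structure in the proof of Proposition~5.2.
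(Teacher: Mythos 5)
Your setup coincides exactly with the paper's: restrict $r$ to $N$ to get $r \restriction N = (x_r \cap N, A_r \cap N) \in N \cap \p$, pick $w \in D$ below it (your $s$), build $C$ from Proposition 5.2 with $A := A_r$ and $B := A_w$, build $z$ from Lemma 4.2 with $x := x_r$ and $y := x_w$, note $C \subseteq \mathcal Y$ by closure of $\mathcal Y$ under isomorphisms, and observe that requirements (2) and (5) hold and that $(z,C) \le r, w$ once $(z,C)$ is a condition. The gap is that everything after this point --- which is the actual content of the paper's proof --- is only announced, not carried out. Verifying requirement (3) for $(z,C)$ takes a four-case analysis (according to whether the pair lies in $x_r$ or is an image $\sigma_{N,N'}(\langle \alpha_0,\alpha_0' \rangle)$ of a pair of $x_w$, and whether the model satisfies $N \le M$ or is an image $\sigma_{N,N'}(K)$); the key step in the hardest case is the claim that $R_M(N') \cap (\alpha,\alpha'] = \emptyset$, proved by pulling a putative remainder point back through $\sigma_{N',N}$ into $x_w$ and contradicting that $w$ is a condition, and then splitting on the position of $\beta_{M,N'}$ relative to $\alpha$. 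Nonoverlapping (1) and the remainder-point requirement (4) then need their own case analyses, with (4) leaning on the displayed description of $R_C$ in Proposition 5.2. None of this is routine, and a proof that stops before it has not proved the proposition.

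Moreover, your plan for finishing is pointed in the wrong direction. You say you must ``arrange to add'' diagonal pairs $\langle \zeta, \zeta \rangle$ at the new remainder points $\min(K \setminus \xi)$, and that you will ``throw in'' pairs $\langle \alpha_M, \alpha_M \rangle$ forced by requirement (3). In the actual proof nothing is added to $z$: the set $z$ produced by Lemma 4.2 already contains every pair that requirements (3) and (4) demand, because $r$ and $w$ each satisfy (3) and (4) and $z$ is closed under the isomorphisms of $C$. For instance, if $\zeta = \min(K \setminus \xi)$ with $K = \sigma_{N,N'}(L)$, $L \in A_w$, and $\xi \in R_{A_r}$, then $\tau := \min(N' \setminus \xi)$ satisfies $\langle \tau, \tau \rangle \in x_r$ (as $r$ is a condition), hence $\langle \sigma_{N',N}(\tau), \sigma_{N',N}(\tau) \rangle \in x_r \cap N \subseteq x_w$, hence $\langle \sigma_{N',N}(\zeta), \sigma_{N',N}(\zeta) \rangle \in x_w$ (as $w$ is a condition), and applying $\sigma_{N,N'}$ gives $\langle \zeta, \zeta \rangle \in z$. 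Enlarging $z$ by hand would instead force you to re-verify closure (requirement (5)) and the equality $z \cap N = x_w$, restarting the argument; recognizing that $z$ needs no enlargement is precisely the point of the proof.
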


\begin{proof}
Fix a set $D$ which is dense in the poset $N \cap \p$, and we will 
show that $D$ is predense below $q$.
Let $r \le q$ be given. 
Then $r \restriction N := (x_r \cap N, A_r \cap N)$ is a condition in $N$. 
Since $D$ is dense, fix $w$ in $D$ below $r \restriction N$. 
We will prove that $r$ and $w$ are compatible.

Let $C$ denote the set 
$$
\{ M \in A_r : N \le M \} \cup 
\{ \sigma_{N,N'}(K) : N' \in A_r, \ N \cong N', \ 
K \in A_w \}.
$$
Let $z$ denote the set 
$$
x_r \cup 
\{ \sigma_{N,N'}(a) : N' \in A_r, \ N \cong N', \ a \in x_w \}.
$$
Let $s := (z, C)$. 
We will show that $s$ is a condition and $s \le r, w$.

By Proposition 5.2, 
$C$ is a finite coherent adequate set, $A_r \cup A_w \subseteq C$, 
and $C \cap N = A_w$. 
Also $C$ is a subset of $\mathcal Y$ since $\mathcal Y$ is closed under isomorphisms. 
So $s$ satisfies requirement (2) in the definition of $\p$. 
By Lemma 4.2, the pair $(z,C)$ is closed, $x_r \cup x_w \subseteq z$, 
and $z \cap N = x_w$. 
Since $(z,C)$ is closed, if $M$ and $M'$ are isomorphic sets in $C$, 
then for any $a \in M \cap z$, $\sigma_{M,M'}(a) \in z$. 
Thus $s$ satisfies requirement (5) in the definition of $\p$. 
Since $x_r \cup x_w \subseteq z$ and $A_r \cup A_w \subseteq C$, 
it follows that if $s$ is a condition, then $s \le r, w$. 

It remains to show that $s$ satisfies requirements (1), (3), and (4) 
in the definition of $\p$. 
Regarding (1), it is easy to see that 
$z$ consists of pairs $\langle \alpha, \alpha' \rangle$ where 
$\alpha \le \alpha' < \omega_2$ and $\alpha \in S$, since this is true of 
pairs in $x_r$ and $x_w$ and these properties are 
preserved under isomorphisms. 
The proof that $z$ consists of nonoverlapping pairs will use requirement (3), 
so we verify (3) first.

\bigskip

(3) Let $\langle \alpha, \alpha' \rangle \in z$ and $M \in C$ be given, 
and assume that $M \nsubseteq \alpha$. 
We will show that $M \cap [\alpha,\alpha'] \ne \emptyset$ implies that 
$\alpha$ and $\alpha'$ are in $M$, and 
$M \cap [\alpha,\alpha'] = \emptyset$ implies that 
$\langle \alpha_M, \alpha_M \rangle \in z$.

\bigskip

\emph{Case 1:} $N \le M$ and $\langle \alpha, \alpha' \rangle \in x_r$. 
Then $M$ is in $A_r$. 
So we are done since $r$ is a condition.

\bigskip

\emph{Case 2:} $N \le M$ and $\langle \alpha, \alpha' \rangle = 
\sigma_{N,N'}(\langle \alpha_0, \alpha_0' \rangle)$ for some 
$\langle \alpha_0, \alpha_0' \rangle \in x_w$ and 
$N' \in A_r$ isomorphic to $N$. 
Since $N \le M$, $N' \le M$. 
If $\alpha' < \beta_{M,N'}$, then 
$N' \cap \beta_{M,N'} \subseteq M$ implies that 
$\alpha$ and $\alpha'$ are in $M$. 
So assume that $\beta_{M,N'} \le \alpha'$.

We claim that $R_{M}(N') \cap (\alpha,\alpha'] = \emptyset$. 
Otherwise assume that $\tau$ is in this intersection. 
Since $r$ is a condition, $\langle \tau, \tau \rangle$ is in $x_r \cap N'$.  
Let $\tau_0 := \sigma_{N',N}(\tau)$. 
Then $\langle \tau_0, \tau_0 \rangle$ is in $x_r \cap N$ and 
hence in $x_w$. 
Since $\sigma_{N',N}$ is order preserving, 
$\alpha_0 < \tau_0 \le \alpha_0'$, which contradicts that $w$ is a condition.

If $\alpha < \beta_{M,N'} \le \alpha'$, then $\min(N' \setminus \beta_{M,N'})$ 
is in $R_{M}(N') \cap (\alpha,\alpha']$, contradicting the claim. 
Therefore $\beta_{M,N'} \le \alpha$. 
If $\xi \in M \cap [\alpha,\alpha']$, then $\xi \in M \setminus \beta_{M,N'}$. 
So $\min(N' \setminus \xi)$ is in $R_M(N') \cap (\alpha,\alpha']$, 
contradicting the claim. 
So $M \cap [\alpha,\alpha'] = \emptyset$. 
Since $\alpha \in N' \setminus \beta_{M,N'}$, 
$\alpha_M = \min(M \setminus \alpha)$ is in $R_{N'}(M)$. 
Hence $\langle \alpha_M, \alpha_M \rangle \in x_r$.

\bigskip

\emph{Case 3:} $M = \sigma_{N,N'}(K)$ for some $K \in A_w$ and 
$N'$ in $A_r$ isomorphic to $N$ and $N' \cap [\alpha,\alpha'] \ne \emptyset$. 
Then $\alpha$ and $\alpha'$ are in $N'$ by cases 1 and 2. 
Let $\alpha_0 := \sigma_{N',N}(\alpha)$ and 
$\alpha_0' := \sigma_{N',N}(\alpha')$. 
Since $s$ satisfies requirement (5), 
$\langle \alpha_0, \alpha_0' \rangle$ is in $z \cap N$. 
Since $z \cap N = x_w$, $\langle \alpha_0, \alpha_0' \rangle \in x_w$.

Assume that $M \cap [\alpha,\alpha'] \ne \emptyset$. 
Applying $\sigma_{N',N}$, $K \cap [\alpha_0,\alpha_0'] \ne \emptyset$. 
Since $w$ is a condition, $\alpha_0$ and $\alpha_0'$ are in $K$. 
Therefore their images under $\sigma_{N,N'}$, namely $\alpha$ and $\alpha'$, 
are in $\sigma_{N,N'}(K) = M$. 

Now assume that $M \cap [\alpha,\alpha'] = \emptyset$. 
Applying $\sigma_{N',N}$, 
$K \cap [\alpha_0,\alpha_0'] = \emptyset$. 
Also $\sigma_{N',N}(\alpha_M) = \sigma_{N',N}(\min(M \setminus \alpha)) = 
\min(K \setminus \alpha_0) = (\alpha_0)_K$. 
Since $w$ is a condition, $\langle (\alpha_0)_K, (\alpha_0)_K \rangle \in x_w$. 
Therefore $\sigma_{N,N'}(\langle (\alpha_0)_K, (\alpha_0)_K \rangle) = 
\langle \alpha_M, \alpha_M \rangle$ is in $z$.

\bigskip

\emph{Case 4:} $M = \sigma_{N,N'}(K)$ for some $K \in A_w$ and 
$N'$ in $A_r$ isomorphic to $N$ and $N' \cap [\alpha,\alpha'] = \emptyset$. 
As $M \in N'$, clearly $M \cap [\alpha,\alpha'] = \emptyset$. 
Since $\alpha_M$ exists, $\tau := \min(N' \setminus \alpha)$ exists. 
As $N \le N'$, $\langle \tau, \tau \rangle \in z$ by Cases 1 and 2. 
If $\tau = \alpha_M$ then we are done. 
Otherwise $\tau < \alpha_M$ and $\alpha_M = \min(M \setminus \tau)$. 
Let $\tau_0 = \sigma_{N',N}(\tau)$. 
Since $s$ satisfies requirement (5), 
$\langle \tau_0, \tau_0 \rangle$ is in $z \cap N = x_w$. 
Also $\min(K \setminus \tau_0)$ is equal to 
$\sigma_{N',N}(\min(M \setminus \tau))$, which is $\sigma_{N',N}(\alpha_M)$. 
As $w$ is a condition, 
$\langle \sigma_{N',N}(\alpha_M), \sigma_{N',N}(\alpha_M) \rangle$ 
is in $x_w$. 
So the image of this pair under $\sigma_{N,N'}$, namely 
$\langle \alpha_M, \alpha_M \rangle$, is in $z$.

\bigskip

Now we show that $z$ consists of nonoverlapping pairs. 
Suppose that $\langle \alpha, \alpha' \rangle$ and 
$\langle \gamma, \gamma' \rangle$ are in $z$, and we will prove that it is 
not the case that $\alpha < \gamma \le \alpha'$. 
If both pairs are in $x_r$ then we are done since $r$ is a condition, 
so assume not. 
Suppose for a contradiction that $\alpha < \gamma \le \alpha'$.

\bigskip

\emph{Case 1:} $\langle \alpha, \alpha' \rangle = 
\sigma_{N,N'}(\langle \alpha_0, \alpha_0' \rangle)$ for some 
$\langle \alpha_0, \alpha_0' \rangle \in x_w$ and $N'$ in $A_r$ 
isomorphic to $N$, 
and $N' \cap [\gamma,\gamma'] = \emptyset$. 
Let $\tau := \min(N' \setminus \gamma)$, which exists because 
$\alpha' \in N'$. 
Then $\alpha < \tau \le \alpha'$. 
Since $s$ satisfies requirement (3), 
$\langle \tau, \tau \rangle$ is in $z \cap N'$. 
Let $\tau_0 := \sigma_{N',N}(\tau)$. 
Since $s$ satisfies requirement (5), 
$\langle \tau_0, \tau_0 \rangle$ is in $z \cap N = x_w$. 
But then $\alpha_0 < \tau_0 \le \alpha_0'$, contradicting 
that $w$ is a condition.

\bigskip

\emph{Case 2:} $\langle \alpha, \alpha' \rangle = 
\sigma_{N,N'}(\langle \alpha_0, \alpha_0' \rangle)$ for some 
$\langle \alpha_0, \alpha_0' \rangle \in x_w$ and $N'$ in $A_r$ 
isomorphic to $N$, 
and $N' \cap [\gamma,\gamma'] \ne \emptyset$. 
Then by requirement (3), $\gamma$ and $\gamma'$ are in $N'$. 
Let $\gamma_0 = \sigma_{N',N}(\gamma)$ and $\gamma_0' = 
\sigma_{N',N}(\gamma')$. 
Since $s$ satisfies requirement (5), 
$\langle \gamma_0, \gamma_0 \rangle$ 
is in $z \cap N = x_w$. 
But then $\alpha_0 < \gamma_0 \le \alpha_0'$, contradicting 
that $w$ is a condition.

\bigskip

\emph{Case 3:} $\langle \gamma, \gamma' \rangle = 
\sigma_{N,N'}(\langle \gamma_0, \gamma_0' \rangle)$ for some 
$\langle \gamma_0, \gamma_0' \rangle \in x_w$ and $N' \in A_r$ 
isomorphic to $N$. 
Since $\alpha < \gamma \le \alpha'$, 
$N' \cap [\alpha,\alpha'] \ne \emptyset$. 
Since $s$ satisfies requirement (3), $\alpha$ and $\alpha'$ are in $N'$. 
Let $\alpha_0 := \sigma_{N',N}(\alpha)$ and 
$\alpha_0' = \sigma_{N',N}(\alpha')$. 
Since $s$ satisfies requirement (5), $\langle \alpha_0, \alpha_0' \rangle$ is in 
$z \cap N = x_w$. 
But then $\alpha_0 < \gamma_0 \le \alpha_0'$, contradicting that 
$w$ is a condition.

\bigskip

Since at least one of $\langle \alpha, \alpha' \rangle$ and 
$\langle \gamma, \gamma' \rangle$ is not in $x_r$, these cases cover 
all possibilities.

\bigskip

(4) Let $\zeta \in R_C$ be given, and we will show that 
$\langle \zeta, \zeta \rangle \in z$. 
By Proposition 5.2, $\zeta$ is in the set 
$$
R_{A_r} \cup \{ \min(K \setminus \xi) : K \in C, \ \xi \in R_{A_r} \} \cup 
\{ \sigma_{N,N'}(\tau) : N' \in A, \ N \cong N', \ \tau \in R_{A_w} \}.
$$
Let $\zeta \in R_C$ be given, and we will show that 
$\langle \zeta, \zeta \rangle \in z$. 

\bigskip

\emph{Case 1:} $\zeta \in R_{A_r}$. 
Since $r$ is a condition, $\langle \zeta, \zeta \rangle \in x_r$.

\bigskip

\emph{Case 2:} $\zeta = \sigma_{N,N'}(\tau)$ for some 
$\tau \in R_{A_w}$ and $N' \in A_r$ isomorphic to $N$. 
Since $w$ is a condition, $\langle \tau, \tau \rangle \in x_w$. 
So $\sigma_{N,N'}(\langle \tau, \tau \rangle) = 
\langle \zeta, \zeta \rangle$ is in $z$ by definition.

\bigskip

\emph{Case 3:} For some $K \in C$ and $\xi \in R_{A_r}$, 
$\zeta = \min(K \setminus \xi)$. 
Then $\langle \xi, \xi \rangle \in x_r$. 
First assume that $K \in A_r$. 
Then since $r$ is a condition, $\langle \zeta, \zeta \rangle \in x_r$.

Secondly assume that $K = \sigma_{N,N'}(L)$ for some $L \in A_w$ and 
$N' \in A_r$ isomorphic to $N$. 
Let $\tau := \min(N' \setminus \xi)$. 
Then $\zeta = \min(K \setminus \tau)$. 
Since $r$ is a condition, $\langle \tau, \tau \rangle \in x_r$. 
Let $\tau_0 := \sigma_{N',N}(\tau)$. 
Then $\langle \tau_0, \tau_0 \rangle \in x_r \cap N = x_w$. 
Since $\zeta = \min(K \setminus \tau)$, applying $\sigma_{N',N}$ we get 
that $\zeta_0 := \sigma_{N',N}(\zeta) = \min(L \setminus \tau_0)$. 
Since $w$ is a condition, $\langle \zeta_0, \zeta_0 \rangle$ is in $x_w$. 
Therefore $\sigma_{N,N'}(\langle \zeta_0, \zeta_0 \rangle) = 
\langle \zeta, \zeta \rangle$ is in $z$.
\end{proof}

\begin{proposition}
The forcing poset $\p$ is an $(S,\mathcal Y)$ coherent adequate type forcing.
\end{proposition}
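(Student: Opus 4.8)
The plan is to verify requirements (I)--(V) in the definition of an $(S,\mathcal Y)$ coherent adequate type forcing, where here a condition has the single component $x_p$ (so $m=0$). Requirement (V) is exactly Proposition 7.2, and requirement (I) is immediate since $x_p$ is a finite subset of $H(\omega_2)=H(\lambda)$. For (II), the definition of $\p$ already stipulates that $A_p$ is a finite coherent adequate subset of $\mathcal Y$; moreover requirement (4) of Definition 7.1 gives $\langle\zeta,\zeta\rangle\in x_p$ for each $\zeta\in R_{A_p}$, and then requirement (1) gives $\zeta\in S$. Hence $R_{A_p}\subseteq S$, so $A_p$ is $(S)$ coherent adequate.

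For (III) I would argue as in the proof of Proposition 6.2. Requirements (1)--(5) of Definition 7.1 are first order over $(H(\omega_2),\in,Y)$, apart from the assertions of membership in $\mathcal X$ and $\mathcal Y$; since $Y$ codes $S$, membership in $S$ is preserved by isomorphisms, and by Lemma 1.11 so are the comparison points, the relations $<$, $\sim$, $\le$, and the remainder points. Using that $\mathcal Y$ is closed under isomorphisms and that $N,N'\prec H(\omega_2)$, it follows that $\sigma_{N,N'}((x,A))=(\sigma_{N,N'}[x],\sigma_{N,N'}[A])$ is again a condition. Finally, since $(x,A)\in N$ forces the finite sets $x$ and $A$ to be contained in $N$, requirement (5) for the closed pair $(y,B)$ yields $\sigma_{N,N'}[x]\subseteq y$, and the coherence of $B$ yields $\sigma_{N,N'}[A]\subseteq B$; so $(y,B)\le\sigma_{N,N'}((x,A))$.

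The substantive requirement is (IV). Given isomorphic $M_0,\dots,M_n$ in $\mathcal Y$ with $\{M_0,\dots,M_n\}$ $(S)$ coherent adequate and $(x,A)\in M_0\cap\dots\cap M_n$, I would set $B:=A\cup\{M_0,\dots,M_n\}$. Isomorphic members of a coherent adequate set are strongly isomorphic, and $A\in M_0\cap\dots\cap M_n$, so $B$ is coherent adequate by Lemma 1.17. To see $R_B\subseteq S$, note that $A\in M_i$ makes each $M\in A$ a member, hence a subset, of $M_i$; by the remarks following Lemma 1.5 this gives $M\cap\omega_2\subseteq\beta_{M,M_i}$, so $\{M,M_i\}$ contributes no remainder point. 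Thus $R_B=R_A\cup R_{\{M_0,\dots,M_n\}}\subseteq S$, and $B\subseteq\mathcal Y$ since $\mathcal Y$ is closed under isomorphisms. It then remains to extend $x$ to a finite set $y$ of nonoverlapping pairs making $(y,B)$ a condition below $(x,A)$. I would take $y$ to be $x$ together with the singleton markers $\langle\zeta,\zeta\rangle$ for $\zeta\in R_B$, closed off under requirements (3) and (5). Requirement (1) should remain manageable because, as $x\subseteq M_i\cap M_j$, Lemma 1.5 bounds every ordinal occurring in $x$ strictly below $\beta_{M_i,M_j}$, while each newly adjoined marker lies at or above some such comparison point; the new pairs are therefore singletons lying above all of $x$, and singletons never overlap.

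I expect requirement (IV), and specifically the verification of requirements (3) and (5) for $(y,B)$, to be the main obstacle. The delicate point is to show that the collection of remainder markers is closed under the isomorphisms $\sigma_{M_i,M_j}$ among the mutually strongly isomorphic models $M_0,\dots,M_n$, so that requirement (5) holds, and that the marker completions demanded by requirement (3) produce only finitely many further pairs, all again above $x$ so that nonoverlapping is preserved. This is precisely a remainder-point preservation phenomenon of the type established in Lemma 5.1, and I would handle it by the same case analysis on the relative positions of the models involved, exploiting that the $M_i$ are pairwise strongly isomorphic and that $A\in\bigcap_i M_i$.
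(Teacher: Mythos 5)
Your handling of (I), (II), (III), and (V) matches the paper's proof of this proposition, and your construction for (IV) --- pass to $B = A\cup\{M_0,\ldots,M_n\}$, adjoin singleton markers for the new remainder points, and close under isomorphisms --- is also exactly the paper's. But your proof of (IV) stops precisely where the real work begins: you never show that ``closing off under requirements (3) and (5)'' terminates in a finite set, that the pairs produced at later stages of the closure (not just the first-stage markers, which is all your Lemma 1.5 bound covers) still lie above $x$ and remain nonoverlapping, or that requirement (3) in fact holds of the resulting pair. A priori the closure could spiral: requirement (3) demands markers $\langle \alpha_K,\alpha_K\rangle$, requirement (5) demands their images under the maps $\sigma_{M_i,M_j}$, those images feed back into (3), and so on; nothing in your sketch rules this out.

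The paper closes this gap with two observations missing from your proposal. First, one round of isomorphism-closure suffices: by Lemma 4.1, the set $z = y \cup \{\sigma_{K,L}(a) : K,L \in B,\ K \cong L,\ a \in y \cap K\}$, where $y = x \cup \{\langle \zeta,\zeta\rangle : \zeta \in R_B \setminus R_A\}$, is already closed, and it is shown to equal $x \cup \{\sigma_{M_i,M_j}(\langle \zeta,\zeta\rangle) : i,j \le n,\ \zeta \in (R_B \cap M_i) \setminus R_A\}$. Every pair in $z \setminus x$ is a singleton lying above every ordinal of $\omega_2$ definable from $(x,A)$ (using that $(x,A) \in M_k \cap M_l$, and that $\beta_{M_i,M_j} \le \sigma_{M_i,M_j}(\zeta)$ whenever $\sigma_{M_i,M_j}(\zeta) \ne \zeta$); this yields requirement (1) and also settles requirement (3) for $K \in A$, since then $\sup(K \cap \omega_2) < \alpha$, so the hypothesis of (3) fails and there is nothing to check. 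Second --- and this is the key idea --- closing under (3) adds nothing new: if $\langle \alpha,\alpha\rangle = \sigma_{M_i,M_j}(\langle \zeta,\zeta\rangle)$ is a new marker, $K \in \{M_0,\ldots,M_n\}$, $K \cap \omega_2 \nsubseteq \alpha$, and $\alpha \notin K$, then $K \cap \beta_{K,M_j} = M_j \cap \beta_{K,M_j}$ forces $\beta_{K,M_j} \le \alpha$, so $\alpha_K = \min(K \setminus \alpha)$ is itself a remainder point in $R_{M_j}(K) \subseteq R_B$, whose marker is already present in $z$. Finally, your appeal to Lemma 5.1 is misdirected: that lemma analyzes models of the form $\sigma_{N,N'}(K)$ and $\sigma_{N,N^*}(L)$ arising in amalgamation over a countable model $N$, a configuration which does not occur here; the tools actually needed are Lemma 4.1 together with the elementary remainder-point computation just described.
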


\begin{proof}
Let $(x,A)$ be a condition in $\p$, and we will show that $(x,A)$ satisfies 
properties (I)--(V) in the definition of an $(S,\mathcal Y)$ coherent adequate type forcing. 

(I) is immediate, and (V) follows from Proposition 7.2. 
(II) By the definition of $\p$, 
$A$ is a finite coherent adequate subset of $\mathcal Y$. 
If $M$ and $N$ are in $A$ and $\zeta \in R_M(N)$, then 
$\langle \zeta, \zeta \rangle \in x$. 
This implies that $\zeta \in S$. 
Therefore $A$ is $(S)$ adequate. 

(III) Let $(y,B) \le (x,A)$, and assume that $N$ and $N'$ 
are isomorphic in $B$ with $(x,A) \in N$. 
Let $\sigma := \sigma_{N,N'}$. 
We will show that $\sigma((x,A))$ is in $\p$ and $(y,B) \le \sigma((x,A))$. 
All the properties of being in $\p$ described in Definition 7.1 
are first order definable in $H(\omega_2)$ without parameters, except 
for the requirements on membership in $S$ and $\mathcal Y$. 
Therefore as $N$ and $N'$ are elementary in $H(\omega_2)$ and $\sigma$ 
preserves membership in $S$ and $\mathcal Y$, $(x,A)$ being in $\p$ 
implies that $\sigma((x,A))$ is in $\p$. 
Also $\sigma((x,A)) = (\sigma[x],\sigma[A])$, and since $(y,B)$ is a condition 
with $x \subseteq y$ and $A \subseteq B$, 
$\sigma[x] \subseteq y$ and $\sigma[A] \subseteq B$. 
So $(y,B) \le \sigma((x,A))$.

(IV) Suppose that $M_0, \ldots, M_n$ are isomorphic sets in 
$\mathcal Y$ such that 
$\{ M_0, \ldots, M_n \}$ is $(S)$ coherent adequate and 
$(x,A) \in M_0 \cap \cdots \cap M_n$. 
Let $C = A \cup \{ M_0, \ldots, M_n \}$. 
By Lemma 1.17, $C$ is coherent adequate.

Let $M$ and $N$ be in $C$. 
If $M$ is in $\{ M_0, \ldots, M_n \}$ and $N$ is in $A$, then  
$N \in M$. 
Hence $R_N(M)$ and $R_M(N)$ are empty. 
It follows that $R_C = R_A \cup \{ R_{M_i}(M_j) : i , j \le n \}$. 
In particular, $R_C \subseteq S$. 
So $C$ is $(S)$ coherent adequate.

Define 
$$
z = y \cup \{ \sigma_{K,L}(a) : K, L \in C, \ K \cong L, \ a \in y \cap K \},
$$
where $y = x \cup \{ \langle \zeta, \zeta \rangle : \zeta \in R_C \setminus R_A \}$. 
By Lemma 4.1, $(z,C)$ is closed. 
We will show that $(z,C)$ is condition. 
Then clearly $(z,C) \le (x,A)$ and $M_0, \ldots, M_k \in C$, which finishes 
the proof.

First we claim that $z$ is equal to 
$$
z' := x \cup \{ \sigma_{M_i,M_j}(\langle \zeta, \zeta \rangle) : 
i, j \le n, \ \zeta \in (R_C \cap M_i) \setminus R_A \}.
$$
Obviously $z' \subseteq z$.

For the other direction, first let us prove that $y \subseteq z'$. 
By definition, $x \subseteq z'$. 
Now consider $\zeta \in R_C \setminus R_A$. 
Then $\zeta \in R_{M_i}(M_j)$ for some $i, j \le n$. 
Hence $\langle \zeta, \zeta \rangle = \sigma_{M_j,M_j}(\langle \zeta, \zeta \rangle)$ is in $z'$.

Secondly assume that $b = \sigma_{K,L}(a)$ where $K, L \in C$, 
$K \cong L$, and $a \in y \cap K$. 
Then either $K, L \in A$ or $K, L \in \{ M_0, \ldots, M_n \}$. 
In the former case, $a$ being in $K$ implies that $a$ is not in 
$R_C \setminus R_A$, since every member of $R_C \setminus R_A$ 
lies above every ordinal in $K \cap \omega_2$. 
Hence $a$ is in $x$. 
So $b \in x$ since $(x,A)$ is a condition. 

Assume that $K, L \in \{ M_0, \ldots, M_n \}$. 
If $a \in x$ then $a \in K \cap L$, so $b = \sigma_{K,L}(a) = a$. 
So $b \in y$, and hence $b \in z'$. 
Otherwise $a = \langle \zeta, \zeta \rangle$ where $\zeta$ is in 
$R_C \setminus R_A$, and then by definition 
$b$ is in $z'$.

We have shown that $z = z'$. 
Note that if $\langle \alpha, \alpha \rangle$ is in $z \setminus x$, then  
for all $K \in A$ and $\langle \gamma, \gamma' \rangle$ in 
$x$, $\sup(K \cap \omega_2)$ and $\gamma'$ are below $\alpha$. 
For suppose $\langle \alpha, \alpha \rangle = 
\sigma_{M_i,M_j}(\langle \zeta, \zeta \rangle)$ for some 
$\zeta \in (R_C \cap M_i) \setminus R_A$. 
Then $\zeta \in R_{M_k}(M_l)$ for some $k, l \le n$. 
So $\beta_{M_k,M_l} \le \zeta$. 
Since $(x,A) \in M_k \cap M_l$, any ordinals in $\omega_2$ 
definable from $(x,A)$ are in $\beta_{M_k,M_l}$ and hence are 
below $\zeta$. 
If $\sigma_{M_i,M_j}(\zeta) = \zeta$ then we are done. 
Otherwise $\beta_{M_i,M_j} \le \zeta$, so clearly 
$\beta_{M_i,M_j} \le \sigma_{M_i,M_j}(\zeta)$. 
And as $(x,A) \in M_i \cap M_j$, again any ordinals definable from 
$(x,A)$ are below $\sigma_{M_i,M_j}(\zeta)$.

We now verify that $(z,C)$ satisfies properties (1)--(5) in the definition of $\p$. 
We already noted that $C$ is a coherent adequate subset of 
$\mathcal Y$ and $(z,C)$ is closed. 
Hence properties (2) and (5) holds. 
Also (4) is immediate.

\bigskip

(1) Since isomorphisms preserve membership in $S$, easily $z$ consists of 
pairs of the form $\langle \alpha, \alpha' \rangle$ where 
$\alpha \le \alpha' < \omega_2$ and $\alpha \in S$. 
Pairs which lie in $x$ are nonoverlapping, and pairs lying in 
$z \setminus x$ are nonoverlapping because the first and second 
components in such a pair are equal. 
Consider $\langle \alpha, \alpha' \rangle \in x$ and 
$\sigma_{M_i,M_j}(\zeta)$ where $i, j \le n$ and 
$\zeta \in (R_C \cap M_i) \setminus R_A$. 
By the comment above, $\alpha' < \sigma_{M_i,M_j}(\zeta)$. 
Hence $\langle \alpha, \alpha' \rangle$ and 
$\sigma_{M_i,M_j}(\langle \zeta, \zeta \rangle)$ do not overlap.

(3) Let $\langle \alpha, \alpha' \rangle \in z$ and $K \in C$ with 
$K \cap \omega_2 \nsubseteq \alpha$. 
If $\langle \alpha, \alpha' \rangle \in x$ and $K \in A$, then we are done. 
If $\langle \alpha, \alpha' \rangle \in x$ and $K \in \{ M_0, \ldots, M_n \}$, 
then $\alpha$ and $\alpha'$ are in $K$. 

Suppose that $\langle \alpha, \alpha' \rangle$ is equal to 
$\sigma_{M_i,M_j}(\langle \zeta, \zeta \rangle)$ for some $i, j \le n$ 
and $\zeta \in (R_C \cap M_i) \setminus R_A$. 
Then $\alpha = \alpha'$. 
If $K \in A$, then by the comment above, 
$\sup(K \cap \omega_2) < \alpha$, contradicting our assumption. 
Therefore $K \in \{ M_0, \ldots, M_n \}$. 
If $\alpha \in K$ then we are done, so assume not.
Then $\alpha \in M_j \setminus K$. 
Since $K \cap \beta_{K,M_j} = M_j \cap \beta_{K,M_j}$, 
$\beta_{K,M_j} \le \alpha$. 
So $\alpha \in M_j \setminus \beta_{K,M_j}$. 
It follows that $\alpha_K = \min(K \setminus \alpha)$ is in $R_{M_j}(K)$.  
By definition of $z$, $\langle \alpha_K, \alpha_K \rangle$ is in $z$.
\end{proof}

It remains to show that $\p$ forces that $\dot C_S$ is a club. 
For unboundedness, given a condition $p$ and an ordinal $\gamma$, 
choose $\beta$ in $S$ 
larger than all ordinals appearing in pairs of $p$ 
and all suprema of models appearing in $p$ intersected 
with $\omega_2$. 
Then $(x_p \cup \{ \langle \beta, \beta \rangle \}, A_p)$ is a condition 
which forces that $\dot C_S$ is not a subset of $\gamma$.

The proof of the closure of $\dot C_S$ is similar to the argument from 
\cite{jk24}, except that we have the new problem of needing to 
close under 
isomorphisms when adding something to a condition. 
This problem is dealt with by the next lemma.

\begin{lemma}
Let $(x,A)$ be a condition and let $\langle \alpha, \alpha' \rangle$ be a pair 
such that $\alpha \le \alpha' < \omega_2$ and $\alpha \in S$. 
Assume:
\begin{enumerate}
\item $\langle \alpha, \alpha' \rangle$ does not overlap any pair in $x$;
\item if $N \in A$ and 
$N \cap \omega_2 \nsubseteq \alpha$, then $N \cap [\alpha,\alpha'] \ne 
\emptyset$ implies that $\alpha$ and $\alpha'$ are in $N$, and 
$N \cap [\alpha,\alpha'] = \emptyset$ implies that 
$\langle \alpha_N, \alpha_N \rangle \in x_p$.
\end{enumerate}
Let $z$ be the set 
$$
x \cup \{ \langle \alpha, \alpha' \rangle \} \cup 
\{ \sigma_{N,N'}(\langle \alpha, \alpha' \rangle) : 
N, N' \in A_r, \ N \cong N', \ \langle \alpha, \alpha' \rangle \in N \}.
$$
Then $(z,A)$ is a condition below $(x,A)$ and 
$\langle \alpha, \alpha' \rangle \in z$.
\end{lemma}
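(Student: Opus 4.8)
The plan is to verify that $(z,A)$ satisfies requirements (1)--(5) of Definition 7.1; the relations $(z,A) \le (x,A)$ and $\langle \alpha, \alpha' \rangle \in z$ are then immediate, since $x \subseteq z$, the side-condition component is unchanged, and $\langle \alpha, \alpha' \rangle$ is listed explicitly in the definition of $z$. Requirement (2) is inherited from $(x,A)$, and requirement (4) holds because $R_A \subseteq \{\zeta : \langle\zeta,\zeta\rangle\in x\} \subseteq \{\zeta:\langle\zeta,\zeta\rangle\in z\}$, using that $(x,A)$ is a condition and $x \subseteq z$. For requirement (5) (closure) I would observe that $z$ is exactly the set produced by Lemma 4.1 applied to the finite set $x \cup \{\langle\alpha,\alpha'\rangle\}$ and the coherent adequate set $A$: the images $\sigma_{M,M'}(a)$ of pairs $a \in x \cap M$ already lie in $x$ because $(x,A)$ is itself closed, so the only genuinely new elements contributed by Lemma 4.1 are the images $\sigma_{M,M'}(\langle\alpha,\alpha'\rangle)$ with $\langle\alpha,\alpha'\rangle \in M$, which are precisely the extra terms defining $z$. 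Hence Lemma 4.1 gives that $(z,A)$ is closed, which is requirement (5). The first half of requirement (1) --- that every member of $z$ is a pair $\langle\beta,\beta'\rangle$ with $\beta \le \beta' < \omega_2$ and $\beta \in S$ --- follows because this property holds of the pairs in $x$ and of $\langle\alpha,\alpha'\rangle$, and is preserved by the maps $\sigma_{N,N'}$, which are order preserving on ordinals and respect membership in $S$ (as $Y$ codes $S$).

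The substance of the proof lies in requirement (3) and in the nonoverlapping half of requirement (1). For (3) I would split on the origin of a pair $\langle\beta,\beta'\rangle \in z$ relative to a model $N \in A$ with $N \cap \omega_2 \nsubseteq \beta$. If $\langle\beta,\beta'\rangle \in x$ the conclusion is immediate from requirement (3) for the condition $(x,A)$; if $\langle\beta,\beta'\rangle = \langle\alpha,\alpha'\rangle$ it is exactly hypothesis (2). The remaining case is $\langle\beta,\beta'\rangle = \sigma_{N_1,N_1'}(\langle\alpha,\alpha'\rangle)$ with $N_1, N_1' \in A$, $N_1 \cong N_1'$, and $\langle\alpha,\alpha'\rangle \in N_1$, so that $\beta,\beta' \in N_1'$. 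Here the plan is to transport the configuration back to the template: comparing $N$ with $N_1'$ inside the adequate set $A$ and applying $\sigma_{N_1',N_1}$, one reduces the instance of (3) for $\langle\beta,\beta'\rangle$ against $N$ to the instance of hypothesis (2) for $\langle\alpha,\alpha'\rangle$ against the appropriate pullback of $N$, using closure of $(z,A)$ to see that the resulting witness $\langle\beta_N,\beta_N\rangle$ lies in $z$. This mirrors Cases 3 and 4 in the proof of Proposition 7.2, with the comparison-point and remainder-point bookkeeping supplied by Lemmas 1.13, 1.18, and 1.19.

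For nonoverlapping I would assume two distinct pairs of $z$ overlap and derive a contradiction, now using requirement (3). If both lie in $x$ this contradicts that $x$ is nonoverlapping, and if one lies in $x$ and the other is $\langle\alpha,\alpha'\rangle$ this contradicts hypothesis (1). When one pair is a nontrivial image $\langle\beta,\beta'\rangle = \sigma_{N_1,N_1'}(\langle\alpha,\alpha'\rangle) \in N_1'$, the overlapping partner has an endpoint inside $[\beta,\beta']$ or straddling it, so requirement (3) forces the relevant endpoints into $N_1'$; applying $\sigma_{N_1',N_1}$ and closure then pulls the overlap back to one against $\langle\alpha,\alpha'\rangle$, contradicting hypothesis (1) or the nonoverlapping of $x$. \textbf{The main obstacle} I expect is the case where both overlapping pairs are nontrivial images of $\langle\alpha,\alpha'\rangle$ under isomorphisms living in different models $N_1', N_2'$ of $A$: there one must use the coherence of $A$ --- in particular the rigidity that strong isomorphisms fix common elements (Lemmas 1.13 and 1.14) together with the comparison-point containment $N_1' \cap N_2' \subseteq \beta_{N_1',N_2'}$ (Lemma 1.5) --- to force both copies into a common model via requirement (3) and then collapse them onto the single template pair $\langle\alpha,\alpha'\rangle$, where the hypotheses apply. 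Arranging this reduction so that it is well founded rather than circular is the delicate point.
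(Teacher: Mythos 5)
Your outline follows the paper's proof in structure: requirements (2) and (4) are immediate; (5) follows by noting that, since $(x,A)$ is closed, $z$ is exactly the set produced by Lemma 4.1 from $y := x \cup \{ \langle \alpha, \alpha' \rangle \}$; requirement (3) is proved before the nonoverlapping half of (1); and your plan for (1), including the hardest case of two nontrivial images, matches the paper's argument. (Your worry about well-foundedness there is unnecessary: the paper first uses the already-established requirement (3) to put $\alpha_1, \alpha_1'$ inside the model housing the second image, so that $\alpha_1' < \beta_{M',N'}$; then, if the two source models are isomorphic, Lemma 1.13 forces $\alpha_1 = \alpha_2$, a contradiction, and if one source model lies below the other, Lemmas 1.10 and 1.13 replace one image by an image having the same source model, reducing to the isomorphic case in one step.)

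The genuine gap is in your verification of requirement (3), which is where the substance of the lemma lies. The reduction you describe --- ``applying $\sigma_{N_1',N_1}$ \ldots\ to the appropriate pullback of $N$'' --- is not well-defined in general, because the test model $N$ need not be an element of $N_1'$, so it has no pullback under $\sigma_{N_1',N_1}$. The paper instead splits on the adequate relation between $N$ and $N_1'$: when $N_1' \le N$ there is no pullback at all (one proves $\alpha_1' < \beta_{N,N_1'}$ and concludes $\alpha_1, \alpha_1' \in N$ directly from $N_1' \cap \beta_{N,N_1'} \subseteq N$); when $N < N_1'$, coherence supplies $N^* \in A$ isomorphic to $N_1'$ with $N \in N^*$, the pullback is $\sigma_{N^*,N_1}(N) \in A$, and Lemma 1.13 is needed to see that $\sigma_{N^*,N_1}$ and $\sigma_{N_1',N_1}$ agree on $\alpha_1, \alpha_1'$, which requires first knowing these ordinals lie below $\beta_{N_1',N^*}$. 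All of these comparison-point inequalities rest on a claim your sketch never isolates and which none of the lemmas you cite (1.13, 1.18, 1.19) supplies: $R_A \cap (\alpha_1,\alpha_1'] = \emptyset$. The paper proves this by combining requirement (4) for $(x,A)$, closure of $(x,A)$, and hypothesis (1): a remainder point $\zeta$ of $A$ lying in $N_1' \cap (\alpha_1,\alpha_1']$ would give $\langle \zeta, \zeta \rangle \in x$, hence $\sigma_{N_1',N_1}(\langle \zeta, \zeta \rangle) \in x$, and this pair overlaps $\langle \alpha, \alpha' \rangle$. Without this claim you cannot rule out $\beta_{N,N_1'} \le \alpha_1'$ (or $\beta_{N^*,N_1'} \le \alpha_1'$), so the transport back to hypothesis (2) never gets started; the same claim also drives the case $N \cap [\alpha_1,\alpha_1'] = \emptyset$, where one must show that the witness pair on $\min(N \setminus \alpha_1)$ belongs to $z$ either because $\min(N \setminus \alpha_1) \in R_A$ or by pulling it through $N^*$ and using closure of $(x,A)$.
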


\begin{proof}
We will prove that $(z,A)$ is a condition. 
Then it is clear that $(z,A) \le (x,A)$ and $\langle \alpha, \alpha' \rangle \in x$. 
Properties (2) and (4) in the definition of $\p$ are immediate. 

\bigskip

(5) Let $y := x \cup \{ \langle \alpha, \alpha' \rangle \}$. 
Since $(x,A)$ is closed, it is easy to see that $z$ is equal to 
$$
y \cup \{ \sigma_{N,N'}(a) : N, N' \in A, \ N \cong N', \ a \in y \cap N \}.
$$
By Lemma 4.1, $(z,A)$ is closed.

\bigskip

(3) Consider a pair $\langle \alpha_1, \alpha_1' \rangle = 
\sigma_{N,N'}(\langle \alpha, \alpha' \rangle)$, 
where $N$ and $N'$ are isomorphic in $A$ and 
$\langle \alpha, \alpha' \rangle \in N$. 
Let $M \in A$ be given such that $M \cap \omega_2 \nsubseteq \alpha_1$.

We claim that $R_A \cap (\alpha_1,\alpha_1'] = \emptyset$. 
Suppose for a contradiction that $\zeta$ is in this intersection. 
Then $\langle \zeta, \zeta \rangle \in x$. 
So $\sigma_{N',N}(\langle \zeta, \zeta \rangle) \in x$. 
But this last pair overlaps with $\langle \alpha, \alpha' \rangle$, which 
contradicts our assumptions.

\bigskip

\emph{Case 1:} $M \cap [\alpha_1,\alpha_1'] \ne \emptyset$. 
We will show that $\alpha_1$ and $\alpha_1'$ are in $M$.
Suppose that there is $\theta \in M \cap [\alpha_1,\alpha_1']$ such that 
$\beta_{M,N'} \le \theta$. 
Let $\zeta := \min(N' \setminus \theta)$. 
Then $\zeta$ is in $R_A$ and 
$\alpha_1 < \zeta \le \alpha_1'$, contradicting the claim above. 
Therefore any ordinal in the nonempty intersection 
$M \cap [\alpha_1,\alpha_1']$ is strictly below $\beta_{M,N'}$. 
In particular, $\alpha_1 < \beta_{M,N'}$. 

Assume that $N' \le M$. 
If $\beta_{M,N'} \le \alpha_1'$, then 
$\min(N' \setminus \beta_{M,N'})$ is in 
$R_A \cap (\alpha_1,\alpha_1']$, contradicting the claim above.
So $\alpha_1' < \beta_{M,N'}$.
Then $\alpha_1$ and $\alpha_1'$ are 
in $N' \cap \beta_{M,N'}$ and hence in $M$.
 
Now assume that $M < N'$. 
Fix $N^*$ in $A$ isomorphic to $N'$ such that $M \in N^*$. 
Since $M \in N^*$, $\beta_{M,N'} \le \beta_{N^*,N'}$. 
In particular, $\alpha_1 < \beta_{N^*,N'}$. 
If $\beta_{N^*,N'} \le \alpha_1'$, then 
$\min(N' \setminus \beta_{N^*,N'})$ is in $R_A \cap (\alpha_1,\alpha_1']$, 
contradicting the claim. 
Therefore $\alpha_1' < \beta_{N^*,N'}$. 
It follows that $\alpha_1$ and $\alpha_1'$ are in $N^*$.

By Lemma 1.13, $\langle \alpha, \alpha' \rangle = 
\sigma_{N',N}(\langle \alpha_1, \alpha_1' \rangle)$ is equal to 
$\sigma_{N^*,N}(\langle \alpha_1, \alpha_1' \rangle)$. 
Since $M \cap [\alpha_1,\alpha_1'] \ne \emptyset$, applying 
$\sigma_{N^*,N}$ we get that 
$\sigma_{N^*,N}(M) \cap [\alpha,\alpha'] \ne \emptyset$. 
As $\sigma_{N^*,N}(M) \in A$, our assumptions imply that 
$\alpha$ and $\alpha'$ are in $\sigma_{N^*,N}(M)$. 
Applying $\sigma_{N,N^*}$, we get that 
$\alpha_1$ and $\alpha_1'$ are in $M$.

\bigskip

\emph{Case 2:} $M \cap [\alpha_1,\alpha_1'] = \emptyset$. 
Let $\beta := \min(M \setminus \alpha_1)$, and we will show that 
$\langle \beta, \beta \rangle \in z$. 
Note that $\beta$ equals $\min(M \setminus \alpha_1')$. 
So if $\beta_{M,N'} \le \alpha_1'$, then 
$\beta \in R_A$ and hence $\langle \beta, \beta \rangle \in x$. 
So assume that $\alpha_1' < \beta_{M,N'}$. 
Since $M$ does not contain $\alpha_1$, we must have that $M < N'$. 
If $\beta_{M,N'} \le \beta$, then $\beta = \min(M \setminus \beta_{M,N'})$. 
So $\beta \in R_A$ and again $\langle \beta, \beta \rangle \in x$. 
Assume that $\beta < \beta_{M,N'}$. 
Since $M < N'$, it follows that $\beta \in M \cap N'$. 

Fix $N^*$ in $A$ which is isomorphic to $N'$ with $M \in N^*$. 
Since $\beta \in N' \cap N^*$, $\beta < \beta_{N',N^*}$. 
So $\alpha_1$ and $\alpha_1'$ are in $N^*$. 
By Lemma 1.13, $\sigma_{N^*,N}(\langle \alpha_1,\alpha_1' \rangle) = 
\sigma_{N',N}(\langle \alpha_1, \alpha_1' \rangle) = 
\langle \alpha, \alpha' \rangle$. 
Since $M \cap [\alpha_1,\alpha_1'] = \emptyset$, applying  
$\sigma_{N^*,N}$ we get that 
$\sigma_{N^*,N}(M) \cap [\alpha,\alpha'] = \emptyset$. 
Let $\pi := \min(\sigma_{N^*,N}(M) \setminus \alpha)$. 
As $\sigma_{N^*,N}(M) \in A$, our assumptions imply that 
$\langle \pi, \pi \rangle \in x$. 
But $\beta = \min(M \setminus \alpha_1)$, 
so applying $\sigma_{N^*,N}$ we 
get that $\sigma_{N^*,N}(\beta) = 
\min(\sigma_{N^*,N}(M) \setminus \alpha) = \pi$. 
Hence $\langle \sigma_{N^*,N}(\beta), \sigma_{N^*,N}(\beta) \rangle$ 
is in $x$. 
Since $(x,A)$ is closed, $\langle \beta, \beta \rangle \in x$.

\bigskip

(1) It is immediate that the pairs of $z$ are of the correct form. 
We will show that they are nonoverlapping. 
This is true by assumption for all pairs in 
$x \cup \{ \langle \alpha, \alpha' \rangle \}$. 

\bigskip

\emph{Case 1:} Suppose that $\langle \gamma, \gamma' \rangle \in x$ and 
$\langle \alpha_1, \alpha_1' \rangle = 
\sigma_{N,N'}(\langle \alpha, \alpha' \rangle)$, where 
$N$ and $N'$ are isomorphic in $A$ and 
$\langle \alpha, \alpha' \rangle \in N$. 
First assume that $N' \cap [\gamma,\gamma'] \ne \emptyset$. 
Then $\gamma$ and $\gamma'$ are in $N'$. 
Hence any overlap between $\langle \alpha_1, \alpha_1' \rangle$ 
and $\langle \gamma, \gamma' \rangle$ translates to an overlap between 
$\langle \alpha, \alpha' \rangle$ and 
$\sigma_{N',N}(\langle \gamma, \gamma' \rangle)$. 
As $\sigma_{N',N}(\langle \gamma, \gamma' \rangle) \in x$, 
this would contradict our assumptions.

Assume that $N' \cap [\gamma,\gamma'] = \emptyset$. 
Then the only possible overlap 
between $\langle \gamma, \gamma' \rangle$ and 
$\langle \alpha_1, \alpha_1' \rangle$ would be if 
$\alpha_1 < \gamma \le \gamma' < \alpha_1'$. 
Letting $\zeta := \min(N' \setminus \gamma)$, 
we have that 
$\langle \zeta, \zeta \rangle \in x$ and $\alpha_1 < \zeta \le \alpha_1'$.
But now we get a contradiction exactly as in the previous paragraph.

\bigskip

\emph{Case 2:} Suppose that $\langle \alpha_1, \alpha_1' \rangle = 
\sigma_{N,N'}(\langle \alpha, \alpha' \rangle)$ and 
$\langle \alpha_2, \alpha_2' \rangle = 
\sigma_{M,M'}(\langle \alpha, \alpha' \rangle)$, where $N$, $N'$, $M$, $M'$ 
are in $A$, $N \cong N'$, and $M \cong M'$. 
Suppose for a contradiction that $\alpha_1 < \alpha_2 \le \alpha_1'$.
Then $M' \cap [\alpha_1,\alpha_1'] \ne \emptyset$. 
By property (3) of $(z,C)$, 
this implies that $\alpha_1$ and $\alpha_1'$ are in $M'$. 
Since these ordinals are also in $N'$, 
$\alpha_1' < \beta_{M',N'}$.

First assume that $M$ and $N$ are isomorphic. 
As $\alpha$ and $\alpha'$ are in $M \cap N$, 
$\sigma_{M,N}(\alpha) = \alpha$ and $\sigma_{M,N}(\alpha') = \alpha'$. 
Since $\alpha_1 < \alpha_2 \le \alpha_1' < \beta_{M',N'}$, 
these three ordinals are in $M' \cap N'$, so 
$\sigma_{M',N'}$ fixes them.  
In particular, we have that $\alpha_1 = \sigma_{N,N'}(\alpha) = 
\sigma_{M',N'}(\sigma_{M,M'}(\sigma_{N,M}(\alpha))) = 
\sigma_{M',N'}(\sigma_{M,M'}(\alpha)) = 
\sigma_{M',N'}(\alpha_2) = \alpha_2$. 
But this contradicts that $\alpha_1 < \alpha_2$.

Assume that $M < N$. 
Fix $N^*$ in $A$ isomorphic to $N$ with $M \in N^*$. 
Then $\alpha$ and $\alpha'$ are in $N \cap N^*$. 
So $\sigma_{N,N'}$ and $\sigma_{N^*,N'}$ agree on $\alpha$ and 
$\alpha'$ by Lemma 1.13. 
Let $L := \sigma_{N^*,N'}(M)$, which is in $A$ and satisfies that 
$\sigma_{M,L} = \sigma_{N^*,N'} \restriction M$. 
In particular, $\sigma_{M,L}(\langle \alpha, \alpha' \rangle) = 
\sigma_{N^*,N'}(\langle \alpha, \alpha' \rangle) = 
\sigma_{N,N'}(\langle \alpha, \alpha' \rangle) = 
\langle \alpha_1, \alpha_1' \rangle$. 
So $\langle \alpha_1, \alpha_1' \rangle = 
\sigma_{M,L}(\langle \alpha, \alpha' \rangle)$ and 
$\langle \alpha_2, \alpha_2' \rangle = 
\sigma_{M,M'}(\langle \alpha, \alpha' \rangle)$, and thus 
we have reduced the current situation to the case considered in 
the previous paragraph. 
A symmetric argument works when $N < M$.
\end{proof}

Note the following consequence of the lemma: 
if $(x,A)$ is a condition and $\langle \beta, \beta' \rangle \in x$, 
then there is $(y,A) \le (x,A)$ such that 
$\langle \beta, \beta \rangle \in y$. 
Also the lemma implies that for all $p$, there is $q \le p$ 
such that for all $\langle \beta, \beta' \rangle \in x_q$, 
$\langle \beta, \beta \rangle \in x_q$. 
For the process of adding $\langle \beta, \beta \rangle$ to $p$ described in 
the lemma adds only pairs of the form $\langle \gamma, \gamma \rangle$, 
where for some $\gamma'$, $\langle \gamma, \gamma' \rangle \in x_p$. 
So repeating finitely many times we can close the condition under this 
requirement.

\begin{proposition}
The forcing poset $\p$ forces that $\dot C_S$ is a club.
\end{proposition}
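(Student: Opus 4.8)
The plan is to isolate closure (unboundedness having already been verified in the remarks preceding this proposition) and reduce it to a single density fact. Fix a limit ordinal $\delta < \omega_2$ and consider the set
$$
E_\delta := \{\, r \in \p : r \Vdash \sup(\dot C_S \cap \delta) < \delta \,\} \cup \{\, r \in \p : \exists\,\alpha'\ \langle \delta, \alpha' \rangle \in x_r \,\}.
$$
I would first check that $E_\delta$ being dense suffices. A condition in the first set forces $\delta$ not to be a limit point of $\dot C_S$, and this is persistent downward (no extension may overlap a blocking pair $\langle \beta_0,\beta_0' \rangle$ with $\beta_0 < \delta \le \beta_0'$, so no club point can later enter $(\beta_0,\delta)$); a condition in the second set forces $\delta \in \dot C_S$, also persistent. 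Hence if $G$ meets $E_\delta$ then in $V[G]$ either $\delta$ is not a limit point of $\dot C_S$ or $\delta \in \dot C_S$, which is exactly closure at $\delta$.

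So let $q = (x,A)$ be arbitrary; the task is to extend $q$ into $E_\delta$. The governing dichotomy is whether the models of $A$ permit us to bound $\dot C_S$ strictly below $\delta$, and I would split on $\cf(\delta)$. Consider first $\cf(\delta) = \omega$ together with the situation where $\delta$ cannot be separated from $A$. Here I would use that $\delta$ is approached by club points to build $M \in \mathcal Y$ with $q \in M$ and $\sup(M \cap \omega_2) = \delta$; the defining property of $\mathcal Y$ applied with $\alpha = \omega_2$ then gives $\delta = \sup(M \cap \omega_2) \in S$, and adjoining $M$ to $q$ (via the amalgamation of Proposition 5.2) is legitimate. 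Since $q \in M$, every $N \in A$ lies in $M$, so $\sup(N \cap \omega_2) < \delta$; for each such $N$ with elements above $\delta$, the point $\delta_N = \min(N \setminus \delta)$ then appears as a remainder point of the enlarged side condition (it is $\min(N\setminus\gamma)$ for suitable $\gamma \in M$ just below $\delta$), so $\delta_N \in R_A$ and requirement (4) of Definition 7.1 already places $\langle \delta_N, \delta_N\rangle$ in the condition. This is precisely hypothesis (2) of Lemma 7.5 for the pair $\langle \delta, \delta\rangle$; hypothesis (1) holds because an overlapping pair $\langle \gamma,\gamma'\rangle \in x$ with $\gamma < \delta \le \gamma'$ would force $\dot C_S \cap (\gamma,\delta] = \emptyset$, contradicting that $\delta$ is a limit point. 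Lemma 7.5 then produces $r \le q$ with $\langle \delta,\delta\rangle \in x_r$, and absorbs the closure of the pairs under the isomorphisms of the side condition — the one ingredient absent from \cite{jk24}.

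The remaining possibilities are handled by bounding. If $\cf(\delta) = \omega_1$, or if at $\omega$-cofinality the models of $A$ do not pin $\delta$, then every $N \in A$ with $\delta \notin N$ has $\sup(N \cap \delta) < \delta$, and I would choose $\beta \in S \cap \delta$ above all first coordinates of pairs in $x$ that lie below $\delta$ and above $\sup(N\cap\delta)$ for the relevant $N$ (using the fatness hypothesis, namely that $S \cap \delta$ contains a club when $\delta \in S \cap \cof(\omega_1)$, to locate such $\beta$), and then add the blocking pair $\langle \beta, \delta \rangle$. Requirement (3) of Definition 7.1 is satisfied because each model either meets $[\beta,\delta]$ only in $\delta$ (when $\delta \in N$, in which case $\beta$ is taken inside $N$) or not at all, and closure under isomorphisms is again supplied by Lemma 7.5. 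This places the extension in the first part of $E_\delta$. The obstructed subcase, where some model forces accumulation to $\delta$ and blocking is impossible, is exactly the pinned case already treated, which forces $\delta \in S$ and routes us to the second strategy.

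The main obstacle is the uniform verification, across all models of the (possibly amalgamated) side condition, of requirement (3) of Definition 7.1 and of hypothesis (2) of Lemma 7.5 while simultaneously guaranteeing $\delta \in S$: this is the combinatorial heart inherited from \cite{jk24}, and it is where the $\mathcal Y$-closure of $S$ under suprema and the fatness of $S$ are indispensable. The genuinely new difficulty — that adjoining a pair to a condition can violate the isomorphism-invariance clauses — is precisely what Lemma 7.5 resolves, so once the cofinality analysis, the choice of the witnessing model $M$, and the identification of the $\delta_N$ as remainder points are in place, the two strategies combine to show $E_\delta$ is dense and complete the proof.
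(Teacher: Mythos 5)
Your reduction to the density of $E_\delta$ is fine (it is equivalent to the paper's formulation: if no extension of $q$ forces $\sup(\dot C_S \cap \delta) < \delta$, then $q$ forces $\delta$ to be a limit point of $\dot C_S$), and the unboundedness remark is correctly quoted. The fatal gap is in your ``pinned'' case. You propose to build $M \in \mathcal Y$ with $q \in M$ and $\sup(M \cap \omega_2) = \delta$. First, this is circular: the defining property of $\mathcal Y$ at $\omega_2$ says that any $M \in \mathcal Y$ already satisfies $\sup(M \cap \omega_2) \in S$, so such an $M$ can exist only if $\delta \in S$, which is essentially what you are trying to establish (your appeal to fatness at $\delta$ in the other case has the same circularity, since the paper's hypothesis gives a club in $S \cap \delta$ only for $\delta \in S \cap \cof(\omega_1)$). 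Second, and independently, no such $M$ exists in the pinned case at all: if $N \in A$ has $\sup(N \cap \delta) = \delta$, then $\sup(N \cap \omega_2) \ge \delta$; since $q \in M$ and $M \prec H(\omega_2)$ give $N \in M$ and hence $\sup(N \cap \omega_2) \in M \cap \omega_2$, we get $M \cap \omega_2 \nsubseteq \delta$, contradicting $\sup(M \cap \omega_2) = \delta$. (Your remark that models $N$ with elements above $\delta$ contribute remainder points $\delta_N \in R_A$ contradicts your own setup: such models preclude $q \in M$.) Third, such an $M$ forces $\cf(\delta) = \omega$, so your fallback for $\cf(\delta) = \omega_1$ --- where blocking is impossible because without loss of generality $q$ forces $\delta$ to be a limit point --- has nowhere to go.

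What is missing is the paper's actual mechanism, in which requirement (3) of Definition 7.1 is the engine that places $\langle \delta, \delta \rangle$ into conditions. For $\cf(\delta) = \omega_1$: adjoin a model $M \in \mathcal Y$ with $\delta \in M$ (legitimate by property (IV), verified in Proposition 7.3 --- not by Proposition 5.2, which amalgamates over a model already in the side condition); since $\delta$ is forced to be a limit point, some extension contains a pair $\langle \gamma, \gamma' \rangle$ with $\sup(M \cap \delta) < \gamma \le \gamma' < \delta$, and requirement (3), applied to that extension as a condition, forces $\langle \gamma_M, \gamma_M \rangle = \langle \delta, \delta \rangle$ to lie in it already; here $\delta \in S$ comes out automatically rather than being assumed. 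For the hard pinned case at $\omega$-cofinality (every accumulating model has points above $\delta$, the paper's Subcase 2c), the paper takes $M$ accumulating to $\delta$ which is minimal first in $M \cap \omega_1$ and then in $\alpha_M = \min(M \setminus \delta)$, and shows that either a blocking pair $\langle \gamma, \alpha_M \rangle$ can be added (landing in the first part of your $E_\delta$), or else $\langle \alpha_M, \alpha_M \rangle$ already lies in $x_q$, whence $\alpha_M \in M \cap S$ and the $\mathcal Y$-property of $M$ at the point $\alpha_M$ --- not at $\omega_2$ --- yields $\delta = \sup(M \cap \alpha_M) \in S$; the minimality of $M$ is then exactly what verifies hypothesis (2) of Lemma 7.4 for adding $\langle \delta, \delta \rangle$. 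Finally, even your unpinned blocking step is incomplete: for models $N$ with $N \cap [\beta, \delta] = \emptyset$ but with points above $\delta$, hypothesis (2) of Lemma 7.4 demands $\langle \delta_N, \delta_N \rangle \in x$, which need not hold for an arbitrary $q$; the paper arranges it beforehand by extending to a condition containing a club pair above all $\sup(K \cap \delta)$ for the bounded models $K$ and invoking requirement (3). Without these ingredients the two strategies you describe do not cover the cases that actually occur.
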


\begin{proof}
Suppose that $p$ forces that $\alpha$ is a limit point of $\dot C_S$. 
We will find a condition below $p$ which forces that $\alpha \in \dot C_S$. 
If $\langle \alpha, \alpha' \rangle \in x_p$ for some $\alpha'$, then $p$ forces 
that $\alpha \in \dot C_S$ and we are done. 
So assume not. 
Then for all $\langle \xi, \xi' \rangle$ in $x_p$, either 
$\xi \le \xi' < \alpha$ or $\alpha < \xi \le \xi'$.

Let $A_0 := \{ K \in A_p : \sup(K \cap \omega_2) < \alpha \}$, 
$A_1 := \{ K \in A_p : \sup(K \cap \alpha) < \alpha, 
\ K \cap \omega_2 \nsubseteq \alpha \}$, and 
$A_2 := \{ K \in A_p : \sup(K \cap \alpha) = \alpha \}$. 
Note that for all $M$ and $N$ in $A_2$, $\alpha$ is a limit point of both $M$ 
and $N$ and therefore $\alpha < \beta_{M,N}$.

\bigskip

\emph{Case 1:} $\cf(\alpha) = \omega_1$.  
Extending $p$ if necessary, we may assume that there is $M \in A_p$ 
such $\alpha \in M$. 
Then $M \cap \alpha$ is bounded below $\alpha$. 
Since $p$ forces that $\alpha$ is a limit point of $\dot C_S$, 
we can fix $q \le p$ such that for some 
$\gamma$ and $\gamma'$, $\sup(M \cap \alpha) < 
\gamma \le \gamma' < \alpha$ and $\langle \gamma, \gamma' \rangle \in x_q$. 
Then $M \cap [\gamma,\gamma'] = \emptyset$ and 
$\min(M \setminus \gamma) = \alpha$. 
So $\langle \alpha, \alpha \rangle$ is in $x_q$ since $q$ 
is a condition.

\bigskip

\emph{Case 2:} $\cf(\alpha) = \omega$. 
Since $p$ forces that $\alpha$ is a limit point of $\dot C_S$, fix $t \le p$ 
satisfying:
\begin{enumerate}
\item[(a)] there is $\gamma$ and $\gamma'$ satisfying that 
$\gamma \le \gamma' < \alpha$, 
$\langle \gamma, \gamma' \rangle \in x_t$, and 
for all $K \in A_0 \cup A_1$, $\sup(K \cap \alpha) < \gamma$;
\item[(b)] $\gamma$ is the largest such ordinal;
\item[(c)] for all pairs $\langle \beta, \beta' \rangle$ in $x_t$, 
$\langle \beta, \beta \rangle$ is in $t$.
\end{enumerate} 
Let $q := (x_t,A_p)$. 
Then $q$ is a condition and $q \le p$. 
If $\langle \alpha, \alpha' \rangle \in x_q$ 
for some $\alpha'$ then we are done, 
so assume not. 
It follows that $\langle \alpha, \alpha \rangle$ does not overlap any pair 
in $x_q$ since $\alpha$ is forced to be a limit point of $\dot C_S$. 
Also by the maximality of $\gamma$, the pair 
$\langle \gamma, \alpha \rangle$ 
does not overlap any pair in $x_q$. 
For all $K \in A_1$, $K \cap [\gamma,\gamma'] = \emptyset$, and 
clearly $\alpha_K = \gamma_K$. 
Hence $\langle \alpha_K, \alpha_K \rangle \in x_q$. 
In particular, for all $K \in A_1$, $\alpha < \alpha_K$. 

\bigskip

\emph{Subcase 2a:} $A_2 = \emptyset$. 
We will use Lemma 7.4 to 
show we can add $\langle \gamma, \alpha \rangle$ to $q$, 
contradicting that $\alpha$ is forced to be a limit point of $\dot C_S$. 
By the choice of $\gamma$, $\gamma \in S$ and 
$\langle \gamma, \alpha \rangle$ does not overlap any pair in $x_q$. 
By the case assumption, if $K \in A_p$ and 
$K \cap \omega_2 \nsubseteq \gamma$, 
then $K \in A_1$. 
By the choice of $\gamma$ and the comments above, 
$K \cap [\gamma,\alpha] = \emptyset$ and 
$\langle \alpha_K, \alpha_K \rangle \in x_q$. 
By Lemma 7.4 there is an extension of $q$ which contains 
$\langle \gamma, \alpha \rangle$.

\bigskip

\emph{Subcase 2b:} $A_2 \ne \emptyset$ and 
there exists $M \in A_2$ such that
$\sup(M \cap \omega_2) = \alpha$. 
We apply Lemma 7.4 to 
show that we can add $\langle \alpha, \alpha \rangle$. 
Note that $\alpha \in S$ since $M \in \mathcal Y$. 
And $\langle \alpha, \alpha \rangle$ does not overlap any pair in 
$x_q$ as noted above. 
Let $N \in A_q$ be given such that 
$N \cap \omega_2 \nsubseteq \alpha$ and $\alpha \notin N$. 
Then $N \notin A_0$. 
If $N \in A_1$ then we already know that 
$\langle \alpha_N, \alpha_N \rangle \in x_q$. 
Suppose that $N \in A_2$. 
Then $\alpha < \beta_{M,N}$ as pointed out above. 
In particular, $\alpha = \sup(M \cap \beta_{M,N})$. 
Since $\alpha \notin N$, it is not the case that $M < N$. 
As $\alpha$ is a limit point of $N \cap \beta_{M,N}$ not in $M$, 
likewise $N$ cannot be below $M$. 
Therefore $M \sim N$. 
Since $\alpha_N \notin M$, $\beta_{M,N} \le \alpha_N$. 
So $\alpha < \beta_{M,N} \le \alpha_N$. 
It follows that 
$\alpha_N = \min(N \setminus \beta_{M,N})$. 
So $\alpha_N \in R_M(N)$. 
Therefore $\langle \alpha_N, \alpha_N \rangle \in x_q$.

\bigskip

\emph{Subcase 2c:} $A_2 \ne \emptyset$ and 
for all $M \in A_2$, $M \cap \omega_2 \nsubseteq \alpha$. 
Let $M$ be a member of $A_2$ satisfying that for all $K \in A_2$, 
(i) $M \cap \omega_1 \le K \cap \omega_1$, and 
(ii) if $M \cap \omega_1 = K \cap \omega_1$ then 
$\alpha_M \le \alpha_K$.

First assume that there is $\langle \beta, \beta' \rangle$ in $x_q$ 
with $\alpha \le \beta \le \alpha_M$. 
Then $\beta_M = \alpha_M$. 
By our assumptions, $\beta$ cannot equal $\alpha$, so 
$\alpha < \beta$. 
By the choice of $q$, $\langle \beta, \beta \rangle \in x_q$. 
Therefore $\langle \alpha_M, \alpha_M \rangle \in x_q$. 
In particular, $\alpha_M \in S \cap M$. 
By the definition of $\mathcal Y$, $\sup(M \cap \alpha_M) = \alpha \in S$.

We apply Lemma 7.4 to show that we can add 
$\langle \alpha, \alpha \rangle$. 
The pair $\langle \alpha, \alpha \rangle$ does not overlap any pair in $x_q$. 
Let $N \in A_q$ be given such that $N \cap \omega_2 \nsubseteq \alpha$ 
and $\alpha \notin N$. 
We will show that $\langle \alpha_N, \alpha_N \rangle \in x_q$. 
Obviously $N \notin A_0$. 
And if $N \in A_1$, then we already know that 
$\langle \alpha_N, \alpha_N \rangle \in x_q$. 
Let $N \in A_2$. 
Then by (i), $M \le N$. 
Recall that $\alpha < \beta_{M,N}$. 
Since $\alpha \notin N$ and $\alpha$ is a limit point of $M \cap \beta_{M,N}$, 
it is not possible that $M < N$. 
So $M \cap \omega_1 = N \cap \omega_1$. 
By (ii), $\alpha_M \le \alpha_N$. 
But then $\alpha < \beta \le \alpha_N$, so $\beta_N = \alpha_N$. 
Since $q$ is a condition, 
$\langle \beta_N, \beta_N \rangle \in x_q$. 
So $\langle \alpha_N, \alpha_N \rangle \in x_q$.

Now assume that there is no such pair $\langle \beta, \beta' \rangle$ in $x_q$. 
In particular, $\langle \alpha_M, \alpha_M \rangle$ is not in $x_q$. 
We apply Lemma 7.4 to show that we can add 
$\langle \gamma, \alpha_M \rangle$ to $q$. 
This is a contradiction, since any extension of $q$ forces that 
$\alpha$ is a limit point of $\dot C_S$. 

Since $\langle \gamma, \gamma \rangle \in x_q$, it follows that 
$\langle \gamma_M, \gamma_M \rangle \in x_q$ since $q$ 
is a condition. 
By the maximality of $\gamma$, $\gamma = \gamma_M$. 
So $\gamma \in M$. 
Also by the maximality of $\gamma$, $\langle \gamma, \alpha_M \rangle$ 
does not overlap any pair in $x_q$ of ordinals below $\alpha$. 
And if $\langle \beta, \beta' \rangle \in x_q$ with $\alpha \le \beta$, 
then $\alpha_M < \beta$ by our case assumption.
Hence $\langle \gamma, \alpha_M \rangle$ does not overlap 
any pair in $x_q$.

Now let $N \in A_q$ be given such that $N \cap \omega_2 \nsubseteq \alpha$. 
We will show that either $\gamma$ and $\alpha_M$ are in $N$, or 
$N \cap [\gamma,\alpha_M] = \emptyset$ and 
$\langle \gamma_N, \gamma_N \rangle \in x_q$. 
Obviously $N \notin A_0$. 
Suppose that $N \in A_1$. 
Then $\langle \alpha_N, \alpha_N \rangle \in x_q$ as noted above. 
And by the choice of $\gamma$, $N \cap \alpha \subseteq \gamma$. 
So $\gamma_N = \alpha_N$. 
Hence $\langle \gamma_N, \gamma_N \rangle \in x_q$. 

Suppose that $N \in A_2$. 
First assume that for all $K \in A_2$, 
$N \cap \omega_1 \le K \cap \omega_1$. 
Then by (i) and (ii), 
$N \cap \omega_1 = M \cap \omega_1$ and $\alpha_M \le \alpha_N$. 
Since $\alpha < \beta_{M,N}$ and $\gamma \in M \cap \alpha$, 
$\gamma \in N$. 
We claim $\alpha_M = \alpha_N$. 
If not, then $\beta_{M,N} \le \alpha_M$. 
But then $\alpha < \beta_{M,N} \le \alpha_M$ implies that 
$\alpha_M = \min(M \setminus \beta_{M,N})$. 
So $\alpha_M \in R_A$, and therefore 
$\langle \alpha_M, \alpha_M \rangle \in x_q$, which contradicts our case assumption.
It follows that $\gamma$ and $\alpha_M$ are both in $N$.

Now assume that $M < N$. 
Recall that $\alpha < \beta_{M,N}$. 
So if $\beta_{M,N} \le \alpha_M$, then 
$\alpha_M = \min(M \setminus \beta_{M,N})$ and hence $\alpha_M \in R_N(M)$. 
This implies that $\langle \alpha_M, \alpha_M \rangle \in x_q$, 
which contradicts our case assumption. 
So $\alpha_M < \beta_{M,N}$. 
Since $M \cap \beta_{M,N} \subseteq N$, 
$\gamma$ and $\alpha_M$ are in $N$.
\end{proof}

\section{Preserving cardinals larger than $\omega_2$}

In this final section we will prove a general amalgamation result 
for models of size $\omega_1$. 
As a consequence, we show that the forcing poset consisting of 
finite coherent adequate sets in $H(\lambda)$ ordered by inclusion is 
$\omega_2$-c.c.\footnote{If $2^{\omega_1} = \omega_2$ and 
$\lambda^{<\lambda} = \lambda$, then 
the chain condition follows immediately from the conjunction of 
Proposition 4.4 of \cite{jk21} and Lemma 2.5 of \cite{mota}. 
A proof of Lemma 2.5 of \cite{mota} can be found in Lemma 3.9 of 
\cite{mota2}.}
Although we did not need this fact for the other results in this paper, 
we record it here for future applications.

\begin{lemma}
Let $M_0$, $M_1$, $K_0$, and $K_1$ be in $\mathcal X$. 
Let $\beta \in \Lambda$. 
Assume that $M_0 \cap \beta = M_1 \cap \beta$, 
$K_0 \cap \beta = K_1 \cap \beta$, $\beta_{K_0,M_0} \le \beta$, 
and $\beta_{K_1,M_1} \le \beta$. 
Then $\beta_{K_0,M_0} = \beta_{K_1,M_1}$.
\end{lemma}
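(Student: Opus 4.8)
The plan is to show that the comparison point of a pair of models, \emph{once it is known to lie at or below} $\beta$, is completely determined by the traces of the two models below $\beta$. The crucial observation is purely about the sets $\Lambda_P$: for any $P \in \mathcal X$ and any $\gamma \in \Lambda$ with $\gamma \le \beta$, whether $\gamma \in \Lambda_P$ depends only on $P \cap \beta$. Indeed, by Definition 1.1, $\gamma \in \Lambda_P$ iff $\gamma = \min(\Lambda \setminus \sup(P \cap \gamma))$, and since $\gamma \le \beta$ we have $P \cap \gamma = (P \cap \beta) \cap \gamma$, so $\sup(P \cap \gamma)$ is computed from $P \cap \beta$ alone. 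Thus the first step is to record that $\Lambda_P \cap (\beta+1)$ is a function of $P \cap \beta$ only. Applying this to the hypotheses $M_0 \cap \beta = M_1 \cap \beta$ and $K_0 \cap \beta = K_1 \cap \beta$, I obtain immediately
$$
\Lambda_{M_0} \cap (\beta+1) = \Lambda_{M_1} \cap (\beta+1), \qquad
\Lambda_{K_0} \cap (\beta+1) = \Lambda_{K_1} \cap (\beta+1).
$$

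Next I would exploit the bound on the comparison points. By Definition 1.4, $\beta_{K_0,M_0} = \max(\Lambda_{K_0} \cap \Lambda_{M_0})$, and by hypothesis this maximum is at most $\beta$. Since every element of $\Lambda_{K_0} \cap \Lambda_{M_0}$ is bounded above by its maximum, the whole set $\Lambda_{K_0} \cap \Lambda_{M_0}$ is contained in $\beta+1$, so
$$
\Lambda_{K_0} \cap \Lambda_{M_0} = \bigl(\Lambda_{K_0} \cap (\beta+1)\bigr) \cap \bigl(\Lambda_{M_0} \cap (\beta+1)\bigr).
$$
The identical reasoning, using $\beta_{K_1,M_1} \le \beta$, gives $\Lambda_{K_1} \cap \Lambda_{M_1} = (\Lambda_{K_1} \cap (\beta+1)) \cap (\Lambda_{M_1} \cap (\beta+1))$. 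Combining these two displayed identities with the equalities of initial segments from the first step shows $\Lambda_{K_0} \cap \Lambda_{M_0} = \Lambda_{K_1} \cap \Lambda_{M_1}$, and taking the maximum of each side yields $\beta_{K_0,M_0} = \beta_{K_1,M_1}$, as desired.

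The only point requiring any care is the bookkeeping in the second step: the hypothesis that the comparison points lie at or below $\beta$ is exactly what licenses replacing the full sets $\Lambda_{K_0}, \Lambda_{M_0}$ (which may have members above $\beta$) by their initial segments below $\beta$ without changing the intersection. There is no genuine obstacle here; the lemma is in essence the assertion that the comparison point is a local invariant of the models determined by their intersections with $\beta$, and the whole argument reduces to the elementary fact about $\Lambda_P \cap (\beta+1)$ isolated at the outset.
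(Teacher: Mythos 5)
Your proof is correct and takes essentially the same approach as the paper: both arguments reduce $\Lambda_{K_i} \cap \Lambda_{M_i}$ to the intersection of the initial segments $\Lambda_{K_i} \cap (\beta+1)$ and $\Lambda_{M_i} \cap (\beta+1)$ (using the hypothesis $\beta_{K_i,M_i} \le \beta$ to justify this), and then apply the observation that $\Lambda_P \cap (\beta+1)$ depends only on $P \cap \beta$. Your opening step simply makes explicit, via Definition 1.1, the implication that the paper labels as clear.
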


\begin{proof}
Clearly $K_0 \cap \beta = K_1 \cap \beta$ and 
$M_0 \cap \beta = M_1 \cap \beta$ imply that 
$\Lambda_{M_0} \cap (\beta+1) = 
\Lambda_{M_1} \cap (\beta+1)$ and 
$\Lambda_{K_0} \cap (\beta+1) = \Lambda_{K_1} \cap (\beta+1)$. 
Since $\beta_{K_0,M_0}$ and $\beta_{K_1,M_1}$ are both in $\beta+1$, 
$\beta_{K_0,M_0} = \max(\Lambda_{K_0} \cap \Lambda_{M_0}) = 
\max(\Lambda_{K_0} \cap \Lambda_{K_1} \cap (\beta+1)) = 
\max((\Lambda_{K_0} \cap (\beta+1)) \cap (\Lambda_{M_0} \cap (\beta+1))) = 
\max((\Lambda_{K_1} \cap (\beta+1)) \cap (\Lambda_{M_1} \cap (\beta+1))) = 
\max(\Lambda_{K_1} \cap \Lambda_{M_1} \cap (\beta+1)) = 
\max(\Lambda_{K_1} \cap \Lambda_{M_1}) = \beta_{K_1,M_1}$.
\end{proof}

\begin{proposition}
Let $A$ be a finite coherent adequate set. 
Let $\chi \ge \lambda$ be a regular cardinal. 
Assume that $N^*$ is an elementary substructure of 
$(H(\chi),\in)$ of size $\omega_1$ such that 
$\beta^* := N^* \cap \omega_2 \in \Lambda$. 
Let $\beta \in \beta^* \cap \Lambda$.

Assume that there exists a map $M \mapsto M'$ from $A$ into $N^*$ satisfying:
\begin{enumerate}
\item $M \cong M'$, $M \cap \beta^* = M' \cap \beta$, 
and $M \cap N^* \subseteq M'$;
\item $K \in M$ iff $K' \in M'$, and in that case, 
$\sigma_{M,M'}(K) = K'$;
\item if $M \in N^*$ then $M = M'$;
\item $A' := \{ M' : M \in A \}$ is a coherent adequate set.
\end{enumerate}
Then $C := A \cup A'$ is a coherent adequate set. 
Moreover, if we let $r_A = \{ \min(M \setminus \beta^*) : M \in A \}$ and 
$r_{A'} = \{ \min(M' \setminus \beta) : M \in A \}$, then  
$R_C \subseteq R_A \cup R_{A'} \cup r_A \cup r_{A'}$.
\end{proposition}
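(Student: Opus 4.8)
The plan is to reduce everything to the comparison of a model $M\in A$ with a model $P'\in A'$ (where $P\in A$), since pairs inside $A$ or inside $A'$ are already governed by the coherent adequacy of $A$ and of $A'$, and $C=A\cup A'$ contains $A$ and $A'$ trivially. First I would record the geometry forced by hypothesis (1): since each $M'$ lies in $N^*$ and is countable, $M'\cap\omega_2\subseteq N^*\cap\omega_2=\beta^*$; and since $M\cap\beta^*=M'\cap\beta\subseteq\beta$, every $M\in A$ satisfies $M\cap[\beta,\beta^*)=\emptyset$, $M\cap\beta=M'\cap\beta$, and $\sigma_{M,M'}$ is the identity on $M\cap\beta$. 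The same holds for $P$ and $P'$.

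The next step is to locate comparison points. Using that $\Lambda_M$ has no element in $(\beta,\beta^*]$ (because $\sup(M\cap\gamma)=\sup(M\cap\beta)$ for such $\gamma$, and $\beta\in\Lambda$) and that $\Lambda_{P'}$ has no element above $\beta^*$ (because $P'\cap\omega_2\subseteq\beta^*$), I would show $\beta_{M,P'}\le\beta$, and moreover that $\beta_{M,P'}=\max(\Lambda_M\cap\Lambda_P\cap(\beta+1))$, since $M$ agrees with $M'$ and $P$ agrees with $P'$ below $\beta$. The same $\Lambda$-computation yields the crucial dichotomy: for $M,P\in A$, either $\beta_{M,P}\le\beta$ or $\beta_{M,P}>\beta^*$, because $\Lambda_M\cap\Lambda_P$ meets $(\beta,\beta^*]$ nowhere.

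For adequacy and coherence of $C$ I would split on this dichotomy. If $\beta_{M,P}\le\beta$, then Lemma 8.1 (applied with $M_0=M_1=M$, $K_0=P'$, $K_1=P$) gives $\beta_{M,P'}=\beta_{M,P}$; since $M\cap\beta_{M,P}=M'\cap\beta_{M,P}$ and $P'\cap\beta_{M,P}=P\cap\beta_{M,P}$, the relation of $\{M,P'\}$ copies that of $\{M,P\}$, transporting the witnessing initial segment through the strong isomorphism $\sigma_{P,P'}$, which fixes the relevant ordinals. If $\beta_{M,P}>\beta^*$ and $M\sim P$, then $M\cap\beta=P\cap\beta=P'\cap\beta$, so $M\cap\beta_{M,P'}=P'\cap\beta_{M,P'}$ and $M\sim P'$. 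If $\beta_{M,P}>\beta^*$ and $M<P$, I would use coherence of $A$ to fix $P^*\in A$ with $P^*\cong P$ and $M\in P^*$; then $M\cap\beta\subseteq P\cap\beta=P'\cap\beta\subseteq P'$, and transporting $M\cap\beta_{M,P'}$ through $\sigma_{P^*,P'}$ (it fixes these ordinals, which lie in $P^*\cap P'$) shows $M\cap\beta_{M,P'}\in P'$, i.e.\ $M<P'$. The three coherence requirements of Definition 1.15 for $C$ then follow: requirement (3) from a closure argument in the style of Lemmas 4.1 and 4.2, using that $A$ and $A'$ are coherent and that $M\mapsto M'$ respects isomorphisms via hypothesis (2); requirement (1) for a mixed pair $M\sim P'$ from strong-isomorphism transitivity through $M'$ and $P'$ (Lemma 1.14), using that $M'\cong P'$ in $A'$; and requirement (2) by exhibiting the required isomorphic witness from coherence of $A$ (or of $A'$) together with hypothesis (2).

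Finally, for the remainder bound I would write $R_C$ as the union over pairs, noting that pairs inside $A$ contribute to $R_A$ and pairs inside $A'$ to $R_{A'}$, and analyze a mixed pair $\{M,P'\}$ at comparison point $\beta_{M,P'}\le\beta$. A remainder point is either an ordinal below $\beta$, which I would identify with a remainder point of $\{M,P\}$ in $A$ or of $\{M',P'\}$ in $A'$ (since $M$ agrees with $M'$ and $P$ with $P'$ below $\beta$, with comparison points matched as above); or it is a point obtained by crossing the gap $[\beta,\beta^*)$, which by $M\cap[\beta,\beta^*)=\emptyset$ must equal $\min(M\setminus\beta^*)\in r_A$, respectively $\min(M'\setminus\beta)\in r_{A'}$. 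The main obstacle is exactly the case $\beta_{M,P}>\beta^*$: here $\beta_{M,P'}$ sits strictly below $\beta<\beta_{M,P}$, so the naive identification of below-$\beta$ remainder points with remainder points of $\{M,P\}$ breaks down. The key technical point I would prove to close this is a $\Lambda$-calculation showing that in this case $M$ has no ordinal strictly between $\beta_{M,P'}$ and $\beta$: any such $\xi$ lies in $M\cap\beta\subseteq P$ (as $M<P$), so $\min(\Lambda\setminus\xi)$ would be a common point of $\Lambda_M$ and $\Lambda_P$ in $(\beta_{M,P'},\beta]$, contradicting the maximality defining $\beta_{M,P'}$. This collapses the interior overlap to at most the comparison point itself, whose contribution is absorbed into $R_A$ via the model $P^*$ obtained from coherence, leaving only the gap-crossing points in $r_A\cup r_{A'}$.
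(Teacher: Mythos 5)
Your handling of adequacy and of the remainder bound is essentially sound, though organized differently from the paper: you split on the dichotomy $\beta_{M,P}\le\beta$ versus $\beta_{M,P}>\beta^*$, whereas the paper proves the single claim that either $\beta^*\le\beta_{K,M}$ or $\beta_{K,M}=\beta_{K,M'}$ and then treats adequacy and remainders uniformly (its case $K<M$ uses $K\cap\beta_{K,M'}=K'\cap\beta_{K,M'}\in M\cap N^*\subseteq M'$ rather than your coherence witness $P^*$). Your ``key technical point'' is correct, but the clean proof is Lemma 1.5 applied to the pair $\{M,P'\}$: any $\xi\in M\cap(\beta_{M,P'},\beta)$ lies in $P'$, contradicting the fact that common elements of $M$ and $P'$ lie below $\beta_{M,P'}$. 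Your derivation via $\min(\Lambda\setminus\xi)\in\Lambda_M\cap\Lambda_P$ silently assumes $\xi\notin\Lambda$ (for $\xi\in\Lambda$ that minimum is $\xi$ itself, which need not belong to $\Lambda_M$). The same Lemma 1.5 argument shows $\beta_{M,P'}\notin M$, so the ``comparison point contribution'' you propose to absorb into $R_A$ via $P^*$ is vacuous, which is fortunate since that absorption step is not justified as stated. You also omit the case $P<M$ with $\beta_{M,P}>\beta^*$, which is easy but not symmetric to the case you do treat.

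The genuine gap is the coherence verification, which you dispose of in one sentence with justifications that do not work. For condition (1) of Definition 1.15 you invoke ``strong-isomorphism transitivity'': strong isomorphism is \emph{not} transitive, and Lemma 1.14 (about images of strongly isomorphic models under maps between strongly isomorphic ambient models) does not apply to the chain $M\cong M'\cong P'$. What actually closes this is hypothesis (1) of the proposition: $P'$ is a countable member of $N^*$, hence $P'\subseteq N^*$, so any $a\in M\cap P'$ lies in $M\cap N^*\subseteq M'$; then $\sigma_{M,M'}(a)=a$ and $\sigma_{M',P'}(a)=a$, and composing gives $\sigma_{M,P'}(a)=a$. (The same imprecision occurs in your adequacy case $M<P$: $\sigma_{P^*,P'}$ fixes the ordinals of $M\cap\beta_{M,P'}$ not because they lie in $P^*\cap P'$, but because they lie in $P^*\cap P\cap P'$, so each of $\sigma_{P^*,P}$ and $\sigma_{P,P'}$ fixes them.) For condition (3) of Definition 1.15 the nontrivial content is exactly the mixed cases, which your appeal to ``closure in the style of Lemmas 4.1 and 4.2'' cannot supply: those lemmas concern amalgamation over a countable $N$ where the new models are images $\sigma_{N,N'}(K)$, a structure absent here. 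What is needed are two crossing claims: (i) if $L'\in A'$ is a member of some $M\in A$, then $L'=L$ (since $L'\in M\cap N^*\subseteq M'$ gives $\sigma_{M,M'}(L')=L'$, while hypothesis (2) gives $\sigma_{M,M'}(L)=L'$), so members of $A$-models that lie in $C$ lie in $A$; and (ii) if $K\in A$ is a member of some $M'\in A'$, then $K\in N^*$, so $K=K'$ by hypothesis (3). Without (i) you cannot handle two isomorphic models of $A$ with $K\in A'$, nor the case $M\in A$, $N\in A'$; without (ii) you cannot handle two models of $A'$ with $K\in A$. Note that (ii) is the only place hypothesis (3) enters the whole proof, and your sketch never uses that hypothesis at all, which is a reliable sign that something is missing.
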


\begin{proof}
For all $M \in A$, 
since $M \cap \beta^* = M' \cap \beta$, $\sigma_{M,M'} \restriction \beta$ is 
the identity function. 
Also note that $M \cap \beta^* = M' \cap \beta$ obviously implies that 
$M \cap \beta^* \subseteq \beta$ and $M \cap \beta = M' \cap \beta$.

First we will prove that $C$ is adequate and that the remainder points of $C$ 
are as required. 
Since $A$ and $A'$ are each adequate, we only need to compare models 
in $A$ with models in $A'$. 
So let $K$ and $M$ be in $A$ and we will compare $K$ and $M'$. 
Since $M' \in N^*$, $M' \cap \omega_2 \subseteq \beta^*$. 
So $\beta_{K,M'} \le \beta^*$ by Lemma 1.7(1).  
If $\beta < \beta_{K,M'}$ 
then by Lemma 1.2, $K \cap [\beta,\beta_{K,M'})$ is nonempty. 
But then $K \cap [\beta,\beta^*)$ is nonempty, which is false. 
So $\beta_{K,M'} \le \beta$.

We claim that either $\beta^* \le \beta_{K,M}$ or 
$\beta_{K,M} = \beta_{K,M'}$. 
In particular, $\beta_{K,M'} \le \beta_{K,M}$. 
For suppose that $\beta_{K,M} < \beta^*$. 
If $\beta < \beta_{K,M}$, then by Lemma 1.7(2), there is 
$\xi \in K \cap M \cap [\beta,\beta_{K,M})$. 
But then $\xi \in K \cap \beta^* \subseteq \beta$, which is a contradiction. 
So $\beta_{K,M} \le \beta$. 
Now apply Lemma 8.1 letting $K_0 = K_1 = K$, $M_0 = M$, 
and $M_1 = M'$. 
Then obviously $K_0 \cap \beta = K_1 \cap \beta$, 
$M_0 \cap \beta = M_1 \cap \beta$, 
$\beta_{K_0,M_0} = \beta_{K,M} \le \beta$, and 
$\beta_{K_1,M_1} = \beta_{K,M'} \le \beta$. 
So by Lemma 8.1, $\beta_{K,M} = \beta_{K,M'}$.

\bigskip

Now we show that $\{ K, M' \}$ is adequate.

\bigskip

\emph{Case 1:} $K < M$. 
Then $K \cap \beta_{K,M} \in M$. 
Since $\beta_{K,M'} \le \beta_{K,M}$, 
$K \cap \beta_{K,M'} \in M$. 
As $\beta_{K,M'} \le \beta$ and 
$K \cap \beta = K' \cap \beta$, 
$K \cap \beta_{K,M'} = K' \cap \beta_{K,M'}$. 
Since $K' \in N^*$, $K \cap \beta_{K,M'} \in M \cap N^*$. 
But $M \cap N^* \subseteq M'$. 
So $K \cap \beta_{K,M'} \in M'$, and therefore $K < M'$.

\bigskip

\emph{Case 2:} $K \sim M$. 
Since $\beta_{K,M'} \le \beta_{K,M}$, 
$K \cap \beta_{K,M'} = M \cap \beta_{K,M'}$. 
As $\beta_{K,M'} \le \beta$ and $M \cap \beta = M' \cap \beta$, 
$M \cap \beta_{K,M'} = M' \cap \beta_{K,M'}$.
So $K \cap \beta_{K,M'} = M' \cap \beta_{K,M'}$.

\bigskip

\emph{Case 3:} $M < K$. 
Then $M \cap \beta_{K,M} \in K$. 
Since $\beta_{K,M'} \le \beta_{K,M}$, 
$M \cap \beta_{K,M'} \in K$. 
But $\beta_{K,M'} \le \beta$ and $M \cap \beta = M' \cap \beta$. 
So $M \cap \beta_{K,M'} = M' \cap \beta_{K,M'}$. 
Therefore $M' \cap \beta_{K,M'} \in K$. 
Hence $M' < K$.

\bigskip

Now we show that the remainder points are as required.

\bigskip

Let $\zeta \in R_K(M')$ be given. 
First assume that $\zeta \ge \beta$. 
If $\zeta = \min(M' \setminus \beta_{K,M'})$, then 
since $\beta_{K,M'} \le \beta$, $\zeta = \min(M' \setminus \zeta)$. 
So $\zeta \in r_{A'}$ and we are done. 
Otherwise $\zeta = \min(M' \setminus \gamma)$ for some 
$\gamma \in K \setminus \beta_{K,M'}$. 
Since $M' \cap \omega_2 \subseteq \beta^*$, $\zeta < \beta^*$. 
As $K \cap \beta^* \subseteq \beta$, $\gamma < \beta$. 
So again $\zeta = \min(M' \setminus \beta)$ and $\zeta \in r_{A'}$.

Secondly, assume that $\zeta < \beta$. 
Then $\zeta \in M' \cap \beta = M \cap \beta$. 
Since $\beta_{K,M'} \le \zeta$, $\zeta$ is not in $K$. 
Suppose that $M \le K$. 
Since $\zeta \in M \setminus K$, $\beta_{K,M} \le \zeta$. 
As $\zeta < \beta$, $\beta_{K,M} < \beta$. 
Therefore by the claim above, $\beta_{K,M} = \beta_{K,M'}$. 
If $\zeta = \min(M' \setminus \beta_{K,M'})$, then 
clearly $\zeta = \min(M \setminus \beta_{K,M})$ and $\zeta \in R_A$. 
Suppose that $\zeta = \min(M' \setminus \gamma)$ for some 
$\gamma \in K \setminus \beta_{K,M'}$. 
Then $\gamma \in K \setminus \beta_{K,M}$ and 
$\zeta = \min(M \setminus \gamma)$, so $\zeta \in R_A$.

Now assume that $K < M$. 
Then $\zeta = \min(M' \setminus \gamma)$ for some 
$\gamma \in K \setminus \beta_{K,M'}$. 
Since $\zeta < \beta$, $\gamma < \beta$. 
Hence $\gamma$ is not in $M$ since otherwise it would be in $M'$. 
So $\gamma \in K \setminus M$. 
Since $K < M$, this implies that $\beta_{K,M} \le \gamma$. 
So $\beta_{K,M} = \beta_{K,M'}$ by the claim above. 
Therefore $\gamma \in K \setminus \beta_{K,M}$ and 
$\zeta = \min(M \setminus \gamma)$. 
Hence $\zeta$ is in $R_A$.

\bigskip

Now consider $\zeta \in R_{M'}(K)$. 
First assume that $\beta \le \zeta$. 
Since $K \cap \beta^* \subseteq \beta$, $\beta^* \le \zeta$. 
But $\beta_{K,M'}$ and any ordinal in $M' \cap \omega_2$ 
are below $\beta^*$. 
So clearly $\zeta = \min(K \setminus \beta^*)$ and $\zeta \in r_A$. 

Now suppose that $\zeta < \beta$. 
First assume that $K \le M$. 
Since $\zeta \notin M'$ and $M' \cap \beta = M \cap \beta$, 
$\zeta \notin M$. 
So $\zeta \in K \setminus M$. 
Since $K \le M$, this implies that $\beta_{K,M} \le \zeta$. 
Therefore $\beta_{K,M} < \beta$ and hence $\beta_{K,M} = \beta_{K,M'}$ 
by the claim above. 
If $\zeta = \min(K \setminus \beta_{K,M'})$, then 
$\zeta = \min(K \setminus \beta_{K,M})$ and 
therefore $\zeta \in R_A$. 
Otherwise $\zeta = \min(K \setminus \gamma)$ for some 
$\gamma \in M' \setminus \beta_{K,M'}$. 
Since $\zeta < \beta$, $\gamma < \beta$. 
So $\gamma \in M' \cap \beta = M \cap \beta$. 
Hence $\gamma \in M \setminus \beta_{K,M}$, so 
$\zeta \in R_A$.

Next assume that $M < K$. 
Then $\zeta = \min(K \setminus \gamma)$ for some 
$\gamma \in M' \setminus \beta_{K,M'}$. 
Since $\zeta < \beta$, $\gamma \in M' \cap \beta = M \cap \beta$. 
So $\gamma \in M$. 
As $\gamma \in M \setminus K$ and $M < K$, 
$\beta_{K,M} \le \gamma < \beta$. 
By the claim above, $\beta_{K,M} = \beta_{K,M'}$. 
Hence $\gamma \in M \setminus \beta_{K,M}$, so  
$\zeta \in R_A$.

\bigskip

Next we will prove that $C$ is coherent. 
We verify the conditions (1), (2), and (3) of Definition 1.15. 

\bigskip

(1) Let $K$ and $M$ be in $A$ and 
assume that $K \sim M'$. 
Then $K \cap \omega_1 = M' \cap \omega_1 = M \cap \omega_1$, 
so $K$ and $M$ are strongly isomorphic. 
Since $M$ and $M'$ are isomorphic, $K$ and $M'$ are isomorphic. 
To see that $K$ and $M'$ are strongly isomorphic, let 
$a \in K \cap M'$ be given. 
Since $a \in M'$ and $M' \in N^*$, $a \in K \cap N^*$. 
Since $K \cap N^* \subseteq K'$, $a \in K'$. 
So $a \in K \cap K'$, which implies that $\sigma_{K,K'}(a) = a$ 
since $K$ and $K'$ are strongly isomorphic by assumption. 
Also $a \in K' \cap M'$, so $\sigma_{K',M'}(a) = a$ since 
$K'$ and $M'$ are strongly isomorphic. 
So $\sigma_{K,M'}(a) = \sigma_{K',M'}(\sigma_{K,K'}(a)) = 
\sigma_{K',M'}(a) = a$.

\bigskip

(2) Let $K$ and $M$ be in $A$. 
First assume that $K < M'$, and we will find $M^*$ in $C$ 
isomorphic to $M'$ such that $K \in M^*$. 
Since $K \cap \omega_1 < M' \cap \omega_1 = M \cap \omega_1$, $K < M$. 
By the coherence of $A$, there is $M^*$ in $A$ isomorphic to $M$ 
such that $K \in M^*$. 
Then $M^*$ is isomorphic to $M'$ and we are done.

Now assume that $M' < K$, and we will find $K^*$ in $C$ 
isomorphic to $K$ such that $M' \in K^*$. 
We have that $M' \cap \omega_1 < K \cap \omega_1 = K' \cap \omega_1$, 
so $M' < K'$. 
Since $A'$ is coherent, there is $K^*$ in $A'$ isomorphic to $K'$ 
such that $M' \in K^*$. 
Then $K^*$ is isomorphic to $K$.

\bigskip

(3) Let $M$, $N$, and $K$ be in $C$, where $M$ and $N$ are isomorphic 
and $K \in M$. 
We will prove that $\sigma_{M,N}(K) \in C$. 

We claim that if $M$ is in $A$ then so is $K$. 
So assume that  $K = L'$ for some $L \in A$. 
Since $L' \in M \cap N^*$ and $M \cap N^* \subseteq M'$, 
$L' \in M \cap M'$. 
Therefore $\sigma_{M,M'}(L') = L'$. 
But $L' \in M'$ implies that $L \in M$ and $\sigma_{M,M'}(L) = L'$. 
Hence $L = L'$ and therefore $L' = K$ is in $A$.

\bigskip

\emph{Case 1:} $M$ and $N$ are in $A$. 
Then $K \in A$, so we are done since $A$ is coherent. 

\bigskip

\emph{Case 2:} $M$ is in $A$ and $N = P'$ for some $P \in A$. 
Then again $K$ is in $A$. 
Let $Q = \sigma_{M,P}(K)$, which is in $A$ since $A$ is coherent. 
Then $\sigma_{P,P'}(Q) = Q'$ is in $A'$. 
And $\sigma_{M,P'}(K) = \sigma_{P,P'}(\sigma_{M,P}(K)) = 
\sigma_{P,P'}(Q) = Q'$ which is in $A'$ and hence in $C$.

\bigskip

\emph{Case 3:} $M = M_0'$ and $N = N_0'$ 
for some $M_0$ and $N_0$ in $A$. 
If $K = L'$ for some $L \in A$, then we are done since $A'$ is coherent. 
Suppose that $K \in A$. 
Then $K \in M_0'$ and $M_0' \in N^*$, which imply that $K \in N^*$. 
So $K = K'$. 
Hence $K \in A'$ so $\sigma_{M,N}(K) \in A'$ since $A'$ is coherent.
\end{proof}

\begin{thm}
Assume CH. 
Then the forcing poset consisting of 
finite coherent adequate sets ordered by inclusion 
is $\omega_2$-c.c.
\end{thm}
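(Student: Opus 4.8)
The plan is to assume toward a contradiction that $\langle A_i : i < \omega_2 \rangle$ enumerates an antichain of distinct conditions and to produce two compatible members. Fix a regular $\chi > \lambda$ and, using CH, build an increasing continuous chain $\langle N_\xi : \xi < \omega_1 \rangle$ of elementary substructures of $(H(\chi),\in)$, each of size $\omega_1$, with $N_\xi^\omega \subseteq N_{\xi+1}$, with $\langle A_i : i<\omega_2\rangle$ and $H(\omega_1)$ members of $N_0$, with $H(\omega_1) \subseteq N_0$, with $N_\xi \cap \omega_2$ strictly increasing, and with $\sup(N_\xi \cap \omega_2) \subseteq N_{\xi+1}$. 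Let $N^* := \bigcup_{\xi<\omega_1} N_\xi$. Then $N^* \prec (H(\chi),\in)$ has size $\omega_1$, $(N^*)^\omega \subseteq N^*$, $H(\omega_1) \subseteq N^*$, and $\beta^* := N^* \cap \omega_2$ is an ordinal of cofinality $\omega_1$, so $\beta^* \in \Lambda$. Since the sequence lies in $N^*$ and $\beta^* \notin N^*$, we have $A_i \in N^*$ iff $i < \beta^*$; in particular $A := A_{\beta^*} \notin N^*$, and some model of $A$ lies outside $N^*$, as otherwise $A$, being a finite subset of $N^*$, would belong to $N^*$.

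Write $A = \{ M_0,\dots,M_n \}$. As each $M_k$ is countable and $\cf(\beta^*) = \omega_1$, each $M_k \cap \beta^*$ is bounded in $\beta^*$, so I fix $\beta \in \beta^* \cap \Lambda$ with $\sup(M_k \cap \beta^*) < \beta$ for every $k$; then $s_k := M_k \cap \beta^* = M_k \cap \beta$. The sets $s_k \subseteq \beta^* \subseteq N^*$ are countable and hence lie in $N^*$ by $\omega$-closure; the transitive collapses $\overline{M}_k$ lie in $H(\omega_1) \subseteq N^*$; and the relations $T_k := \{ (a,b) : a \in M_k \cap N^*,\ \sigma_{M_k}(a) = b \}$ are countable subsets of $N^* \times N^*$ and so also lie in $N^*$. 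The crucial point, exactly as in the proof of Lemma 2.2, is that $N^*$ itself is never needed as a parameter: the requirement $M_k \cap N^* \subseteq M'_k$ together with agreement of the collapsing maps on $M_k \cap N^*$ is encoded entirely by the implication $T_k(a,b) \to \sigma_{M'_k}(a) = b$. Set $I := \{ k : M_k \in N^* \}$, and for $k,l$ with $M_l \in M_k$ set $c_{k,l} := \sigma_{M_k}(M_l) \in \overline{M}_k \subseteq N^*$. Let $\psi(i)$ be the statement that $A_i$ admits an enumeration $A_i = \{ M'_0,\dots,M'_n \}$ such that, for all $k,l$: $\overline{M'_k} = \overline{M}_k$ and $M'_k \cap \beta = s_k$; $T_k(a,b) \to \sigma_{M'_k}(a) = b$; $M'_l \in M'_k \leftrightarrow M_l \in M_k$, with $\sigma_{M'_k}(M'_l) = c_{k,l}$ whenever $M_l \in M_k$; and $M'_k = M_k$ for $k \in I$. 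All parameters of $\psi$ belong to $N^*$, and $\psi(\beta^*)$ holds, witnessed by $A$ itself with $M'_k = M_k$. By elementarity there is $i_0 \in N^* \cap \omega_2 = \beta^*$ with $\psi(i_0)$; put $A' := A_{i_0} = \{ M'_0,\dots,M'_n \} \subseteq N^*$ and define $M_k \mapsto M'_k$.

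It remains to verify that this map meets the hypotheses of Proposition 8.2, which is routine. Equality of collapses gives that $M_k$ and $M'_k$ are isomorphic; $M_k \cap \beta^* = s_k = M'_k \cap \beta$, and $M_k \cap N^* \subseteq M'_k$ follows from the $T_k$ clause; strong isomorphism $M_k \cong M'_k$ holds because any $a \in M_k \cap M'_k$ lies in $M_k \cap N^*$ (as $M'_k \subseteq N^*$), whence $\sigma_{M'_k}(a) = \sigma_{M_k}(a)$ and therefore $\sigma_{M_k,M'_k}(a) = a$; thus clause (1) holds. Clause (2) follows from the membership and $c_{k,l}$ requirements, clause (3) is the stipulation $M'_k = M_k$ for $k \in I$, and clause (4) holds since $A'$ is a condition. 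By Proposition 8.2, $C := A \cup A'$ is coherent adequate, hence a condition with $C \supseteq A \cup A'$, so $A = A_{\beta^*}$ and $A' = A_{i_0}$ are compatible. But $A_{i_0} \in N^*$ while $A_{\beta^*} \notin N^*$, so these are two distinct members of the antichain, a contradiction. Hence the poset has no antichain of size $\omega_2$ and is $\omega_2$-c.c. The main obstacle is precisely the reflection step: one must express the existence of the reflected condition so that elementarity of $N^*$ applies, and the essential device, taken from Lemma 2.2, is to replace every reference to $N^*$ by the internally coded relations $T_k$ and the sets $s_k$, all of which do lie in $N^*$. Once this is arranged, everything reduces to checking the hypotheses of the amalgamation result Proposition 8.2.
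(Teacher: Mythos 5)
Your proof is correct and follows essentially the same route as the paper's: reflect $A = A_{\beta^*}$ inside a countably closed $N^* \prec H(\chi)$ of size $\omega_1$, encoding the data the paper calls $M_i \cap \beta^*$, $\overline{\mathfrak M}_i$, $S_i$, $J_{i,j}$, $d$ (your $s_k$, $\overline{M}_k$, $T_k$, $c_{k,l}$, $I$) so that elementarity applies, and then amalgamate $A$ with the reflected condition $A' \in N^*$ via Proposition 8.2 to contradict antichainhood. Phrasing this as a contradiction from an injective $\omega_2$-enumeration, rather than showing every maximal antichain is contained in $N^*$, is an inessential difference, and your explicit chain construction of $N^*$ (which guarantees $\beta^* = N^* \cap \omega_2 \in \Lambda$) is if anything more careful than the paper's.

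The one repair needed concerns the predicate $Y$: you never arrange $Y \in N^*$, whereas the paper requires $Y$, $\p$, $\mathcal A \in N^*$. In this framework all transitive collapses and isomorphisms are taken with respect to the structures $(M, \in, Y \cap M)$ --- that is what makes isomorphisms, strong isomorphisms, and coherence $Y$-preserving --- so the clauses $\overline{M_k'} = \overline{M}_k$ and $T_k(a,b) \to \sigma_{M_k'}(a) = b$ in your formula $\psi$ carry $Y$ as a hidden parameter, and without $Y \in N^*$ the application of elementarity to $\psi$ is not legitimate. (If instead you intended pure $\in$-collapses, the argument would genuinely fail: equality of $\in$-collapses does not yield the strong, $Y$-preserving isomorphisms required by hypothesis (1) of Proposition 8.2, nor the coherence of $A \cup A'$.) The fix is trivial: place $Y$ into $N_0$ alongside the enumeration and $H(\omega_1)$.
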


\begin{proof}
Let $\mathcal A$ be a maximal antichain, and we will prove that 
$|\mathcal A| \le \omega_1$. 
Fix a regular cardinal $\chi > \lambda$ such that $\p$ is in $H(\lambda)$. 
By CH, we can fix $N^*$ an elementary substructure of $H(\chi)$ 
of size $\omega_1$ 
such that $Y$, $\p$, and $\mathcal A$ are in $N^*$ and 
$(N^*)^\omega \subseteq N^*$.

Since $N^*$ has size $\omega_1$, we will be done if we can show that 
$\mathcal A \subseteq N^*$. 
Let $\beta^* := N^* \cap \omega_2$. 
Note that $\beta^*$ is in $\Lambda$. 
And by elementarity, $\Lambda \cap \beta^*$ is cofinal in $\beta^*$.

Let $A = \{ M_0, \ldots, M_k \}$ be a condition in $\mathcal A$. 
Fix $\beta \in \Lambda \cap N^*$ large enough so that 
for all $M \in A$, $\sup(M \cap \beta^*) < \beta$. 
Let $R$ be the relation where $R(i,j)$ holds if 
$M_i \in M_j$. 
Let $d$ be the set of $i \le k$ such that $M_i \in N^*$.

For each $i \le k$ let 
$\mathfrak M_i$ denote the structure 
$(M_i,\in,Y \cap M_i)$, and let 
$\overline{\mathfrak M}_i$ denote its transitive collapse. 
Recall that $\sigma_{M_i}$ is the transitive collapsing map 
of $\mathfrak{M}_i$. 
For each pair $\langle i, j \rangle$ in $R$, let 
$J_{i,j} := \sigma_{M_j}(M_i)$. 
For each $i \le k$ let $S_i$ be the relation where 
$S_i(a,b)$ holds if $a \in M_i \cap N^*$ and $\sigma_{M_i}(a) = b$.

Since $R$ and $d$ are finite, they are in $N^*$. 
By CH, $H(\omega_1) \subseteq N^*$. 
So each transitive collapse $\overline{\mathfrak{M}}_i$ and each 
object $J_{i,j}$ is in $N^*$. 
As $N^*$ is countably closed, each set $M_i \cap N^*$ is a member of $N^*$. 
So each relation $S_i$ is in $N^*$.

The objects $M_0, \ldots, M_k$ witness in the 
model $H(\chi)$ that there exist $M_0', \ldots, M_k'$ satisfying:
\begin{enumerate}
\item[(a)] 
$\{ M_0', \ldots, M_k' \}$ is a finite coherent adequate set in $\mathcal A$;
\item [(b)] for $i \le k$, $M_i' \cap \beta = M_i \cap \beta^*$, 
$M_i \cap N^* \subseteq M_i'$, and the transitive collapse of 
the structure $(M_i',\in,Y \cap M_i')$ is equal to $\overline{\mathfrak{M}}_i$;
\item [(c)] $R(i,j)$ iff $M_i' \in M_j'$, and in that case, 
$\sigma_{M_j'}(M_i') = J_{i,j}$;
\item [(d)] for $i \le k$, if $S_i(a,b)$ then $\sigma_{M_i'}(a) = b$;
\item [(e)] for $i \in d$, $M_i = M_i'$.
\end{enumerate}
The parameters mentioned in the above statement are those described in the 
previous paragraph together with $\mathcal A$, $\beta$, $M_i \cap \beta^*$ 
for $i \le k$, and $M_i$ for $i \in d$. 
So the parameters are all members of $N^*$. 
By the elementarity of $N^*$, fix $M_0', \ldots, M_k'$ in $N^*$ 
satisfying the same statement.

Let us show that the map $M_i \mapsto M_i'$ for $i \le k$ 
satisfies the assumptions (1)--(4) of Proposition 8.2. 
By (a), $\{ M_0', \ldots, M_k' \}$ is a coherent adequate set, 
so (4) holds. 
For (3), if $M_i \in N^*$, then $i \in d$, and so by (e), $M_i = M_i'$.

(1) Consider $i \le k$. 
By (b), $M_i \cap \beta^* = M_i' \cap \beta$ and 
$M_i \cap N^* \subseteq M_i'$. 
Also by (b), the structures $(M_i,\in,Y \cap M_i)$ and 
$(M_i',\in,Y \cap M_i')$ have the same transitive collapse 
and hence are isomorphic. 
To see that $M_i \cong M_i'$, let $a \in M_i \cap M_i'$. 
Then $a \in M_i \cap N^*$. 
Let $b := \sigma_{M_i}(a)$. 
Then $S_i(a,b)$ holds. 
By (d), $\sigma_{M_i'}(a) = b$. 
Hence $\sigma_{M_i,M_i'}(a) = 
\sigma_{M_i'}^{-1}(\sigma_{M_i}(a)) = 
\sigma_{M_i'}^{-1}(b) = a$. 

(2) By the definition of $R$ and (c), $M_i \in M_j$ iff $R(i,j)$ 
iff $M_i' \in M_j'$. 
And in that case, $\sigma_{M_j'}(M_i') = J_{i,j}$. 
Now $J_{i,j} = \sigma_{M_j}(M_i)$ by definition. 
So $\sigma_{M_j,M_j'}(M_i) = 
\sigma_{M_j'}^{-1}(\sigma_{M_j}(M_i)) = 
\sigma_{M_j'}^{-1}(J_{i,j}) = M_i'$. 

This completes the verification of the assumptions of Proposition 8.2.  
Let $C := A \cup A'$. 
Then $C$ is a finite coherent adequate set, and obviously 
$C \le A, A'$. 
Hence $A$ and $A'$ are compatible. 
But $A$ and $A'$ are both in $\mathcal A$. 
Since $\mathcal A$ is an antichain, $A = A'$. 
Therefore $A \in N^*$. 
Thus we have shown that $\mathcal A \subseteq N^*$, completing the proof.
\end{proof}

By a somewhat easier argument, 
if CH fails then the forcing poset consisting of finite coherent adequate sets ordered 
by inclusion satisfies the $(2^\omega)^+$-c.c. 
Hence by the material in Section 3, if $2^\omega < \lambda$ then 
this forcing poset collapses $2^\omega$ to 
have size $\omega_1$, forces CH, and preserves all cardinals above $2^\omega$.

\bibliographystyle{plain}
\bibliography{paper25}

\end{document}